\definecolor{gruen}{rgb}{0,0.625,0}     % semble une solution pour obtenir un vert assez sombre
\definecolor{rot}{rgb}{0.75,0,0}        % essai pour obtenir un rouge sombre
\definecolor{blau}{rgb}{0,0,0.75}       % essai pour obtenir un bleu sombre
\newcommand{\CC}{\ensuremath{\mathbb{C}}}
\newcommand{\ZZ}{\ensuremath{\mathbb{Z}}}
\newcommand{\DD}{\ensuremath{\mathbb{D}}}
\newcommand{\md}{\mathrm{d}}
\newcommand{\me}{\mathrm{e}}
\newcommand{\ve}{\varepsilon}
\newcommand{\mo}{\mathcal{O}}
\newcommand{\mi}{\mathrm{i}}
\newcommand{\sm}{\setminus}
\newtheorem{theorem}{Theorem}[section]
\newtheorem{lemma}[theorem]{Lemma}
\newtheorem{proposition}[theorem]{Proposition}
\newtheorem{corollary}[theorem]{Corollary}
\newtheorem{remark}[theorem]{Remark}
\numberwithin{equation}{section}
\titleformat{\section}{\bfseries}{\thesection .}{0.5em}{}
\titleformat{\subsection}{\it}{\thesubsection .}{0.5em}{}
\titleformat{\subsubsection}{\it}{\thesubsubsection .}{0.5em}{}
\titlespacing{\section}{0pt}{3ex plus 1ex minus .2ex}{3ex plus .2ex}
\titlespacing{\subsection}{0pt}{3ex plus 1ex minus .2ex}{3ex plus .2ex}
\titlespacing{\subsubsection}{0pt}{3ex plus 1ex minus .2ex}{3ex plus .2ex}
\DeclareMathOperator*{\Res}{Res}
\definecolor{lime}{HTML}{A6CE39}
\DeclareRobustCommand{\orcidicon}{%
	\begin{tikzpicture}
		\draw[lime, fill=lime] (0,0) 
		circle [radius=0.14] 
		node[white] {{\fontfamily{qag}\selectfont \tiny ID}};	\draw[white, fill=white] (-0.0625,0.095) 
		circle [radius=0.007];
	\end{tikzpicture}
	\hspace{-2mm}}
\xdef\csname orcid\x\endcsname{\noexpand\href{https://orcid.org/\csname orcidauthor\x\endcsname}{\noexpand\orcidicon}}
\begin{document}
	
\title{\bf\Large Asymptotics of the Humbert functions $\Psi_1$ and $\Psi_2$}

\author[1]{Peng-Cheng Hang\orcidA{}\,}
\author[2,3]{Malte Henkel\orcidB{}\,}
\author[1]{Min-Jie Luo\orcidC{}\,\thanks{Corresponding author, Email: 
		\texttt{mathwinnie@live.com}; \texttt{mathwinnie@dhu.edu.cn}}}

\affil[1]{{\normalsize Department of Mathematics, School of Mathematics and Statistics, Donghua University,
		
		Shanghai 201620, People's Republic of China}\vspace{1mm}}
\affil[2]{{\normalsize Laboratoire de Physique et Chimie Th\'eoriques (CNRS UMR 7019),
		
		Universit\'e de Lorraine Nancy, B.P. 70239, F -- 54506 Vand\oe uvre l\`{e}s Nancy Cedex, France}\vspace{1mm}}

\affil[3]{{\normalsize Centro de F\'isica Te\'orica e Computacional, Universidade de Lisboa,
		
		Campo Grande, P--1749-016 Lisboa, Portugal}}

\date{}

\maketitle

\begin{abstract}
	
	A compilation of new results on the asymptotic behaviour of the Humbert functions $\Psi_1$ and $\Psi_2$, 
	and also on the Appell function $F_2$, is presented. 
	As a by-product, we confirm a conjectured limit which appeared recently in the study of the $1D$ Glauber-Ising model.
	We also propose two elementary asymptotic methods and confirm through 
	some illustrative examples that both methods have great potential and can be applied to a large class of problems of asymptotic analysis.
	Finally, some directions of future research are pointed out in order to suggest ideas for further study. \\
	
	\noindent
	2020 \textit{Mathematics Subject Classification}: 33C65, %Appell, Horn and Lauricella functions
	33C70, %Other hypergeometric functions and integrals in several variables
	41A60, %Asymptotic approximations, asymptotic expansions (steepest descent, etc.)
	82C23. %Exactly solvable dynamic models in time-dependent statistical mechanics
	\\
	
	\noindent
	{\it Keywords and phases}: asymptotic expansions, Humbert functions, Glauber-Ising model.
\end{abstract}

%%###########################################################################################################################%%
\section{Introduction}
%%###########################################################################################################################%%

When studying the limiting cases of the famous Appell hypergeometric functions $F_1$, $F_2$, $F_3$ and $F_4$, 
P. Humbert \cite{Humbert 1922} introduced seven confluent hypergeometric functions of two variables which are denoted by 
$\Phi_1$, $\Phi_2$, $\Phi_3$, $\Psi_1$, $\Psi_2$, $\Xi_1$ and $\Xi_2$. 
Humbert's pioneering contributions have inspired many important works. 
Here we want to mention Erd\'{e}lyi's work on systems of linear partial differential equations satisfied by Humbert's functions 
(see \cite{Erdelyi-1939} and \cite{Erdelyi-1940}). Actually, the study of systems satisfied by Humbert's functions remains an active area of research. 
For more recent work, the interested readers may refer to \cite{Mukai-2023} and  \cite{Shimomura-1996}.

The Humbert functions have a wide range of applications in various branches of mathematics and physics. 
Tuan \emph{et al.} \cite{Tuan-Saigo-Dinh-1996} introduced a useful integral transform with the function $\Phi_1$ 
in the kernel and showed that it is an isomorphism in the space of entire functions of exponential type. The function $\Phi_2$ 
occurs as an approximate solution to the Schr\"{o}dinger equation for the three-body Coulomb problem \cite{Gasaneo-1997}. 
The function $\Phi_3$ is usually related to the generalized Hille-Hardy formula \cite{Miyazaki-2014}. 
It also appears in Kumar's work on generalizing one of Ramanujan's transformation formula \cite{Kumar-2020}. 
The functions $\Phi_3$ and $\Psi_2$ have been found to be useful in the evaluation of the Voigt functions \cite{Srivastava-Miller-1987}. 
A multivariate generalization of $\Psi_2$ has been required in the study of non-central matrix-variate Dirichlet distributions \cite{Sanchez-Nagar-2003}. 
Belafhal and Nebdi \cite{Belafhal-Nebdi-2014} succeeded in generating a novel donut beam which they called the Humbert beam because the field distribution 
of such a beam is expressed in terms of the Humbert function $\Psi_1$. Later, Chib \emph{et al.} \cite{Chib-Khannous-Belafhal-2023} 
introduced the donut Humbert beam of type-II since its field distribution is expressed in terms of the Humbert function $\Psi_2$.

Very recently, the second author found that the Humbert function $\Psi_1$ 
appears naturally in the two-time correlator of the \textcolor{blue}{$1D$} Glauber-Ising model at temperature $T=0$ 
and as a by-product, he conjectured the following limit \cite{Henk}
\begin{equation}\label{Henkel's conjecture}
	\lim_{z\to 0^+}\me^{-\xi^2/(2z)}z^{1/2}\,\Psi_1\left[1,\frac{1}{2};\frac{3}{2},\frac{1}{2};-z,\frac{\xi^2}{2z}\right]
	=\frac{\pi}{2}\,\text{erf}\left(\frac{\xi}{\sqrt{2}}\right),
\end{equation}
where $\xi>0$ is kept fixed, $\text{erf}$ denotes the usual error function \cite{NIST} 
and the Humbert function $\Psi_1$ is defined by \cite[p. 26, Eq. (21)]{SrKa}
\begin{equation}
	\Psi_1[a,b;c,c';x,y]:=\sum_{m,n=0}^{\infty}\frac{(a)_{m+n}(b)_m}{(c)_m(c')_n}\frac{x^m}{m!}\frac{y^n}{n!} ~~~(|x|<1,~|y|<\infty)
\end{equation}
with $a,b\in\CC$ and $c,c'\notin\ZZ_{\leqslant 0}$.   
The conjectured limit \eqref{Henkel's conjecture} is of particular interest because in general, the behaviour of 
$\Psi_1\bigl[x,\frac{y}{x}\bigr]$ as $x\to 0$ 
is singular and is quite different from the usual behaviours of
\[\Psi_1[tx,y],\ \Psi_1[x,ty],\ \Psi_1[tx,ty],\quad t\to\infty,\]
which have been studied in detail in \cite{HaLu1,HaLu2,HaLu3}. 
Therefore, as a motivation for this paper, we first verify the correctness of the limit \eqref{Henkel's conjecture} 
by establishing some  asymptotic formulas for $\Psi_1[x,y]$ under the condition that
\begin{equation}\label{initial asymptotic condition}
	x\text{ or }y\text{ is small},\text{ and }0<\gamma_1\leqslant |xy|\leqslant \gamma_2<\infty.
\end{equation}

Next, we extend our study to the Humbert function $\Psi_2$, since $\Psi_2$ is a confluent form of $\Psi_1$. More precisely, we have \cite[p. 26, Eq. (26)]{SrKa}
\[
\lim_{\varepsilon\rightarrow0}\Psi_{1}[a,1/\varepsilon;c,c';\varepsilon x,y]=\Psi_2[a;c,c';x,y].
\]
The Humbert function $\Psi_2$ is defined by \cite[p. 26, Eq. (22)]{SrKa}
\begin{equation}
	\Psi_2[a;c,c';x,y]:=\sum_{m,n=0}^{\infty}\frac{\left(a\right)_{m+n}}{\left(c\right)_m\left(c'\right)_n}\frac{x^m}{m!}\frac{y^n}{n!} 
	~~~(|x|<\infty,~|y|<\infty),
\end{equation}
where $a\in\CC$ and $c,c'\notin\ZZ_{\leqslant 0}$. To the best of our knowledge, there has been no investigation on the asymptotics of $\Psi_2$. Joshi and Arya \cite[p. 499]{Josh-Arya-1982} proposed a bilateral inequality for $\Psi_2$, 
but as they themselves pointed out, the inequality is not simple and sharp. 
So we shall first study the asymptotics of $\Psi_2[x,y]$ under the condition \eqref{initial asymptotic condition} and then consider the asymptotics of $\Psi_2$ 
for one or two large arguments for completeness. 

Finally, we turn to the Appell function $F_2$, which is defined by \cite[p. 23, Eq. (3)]{SrKa}
\begin{equation}
	F_2[a,b,b';c,c';x,y]:=
	\sum_{m,n=0}^{\infty}\frac{\left(a\right)_{m+n}\left(b\right)_m(b')_n}{\left(c\right)_m\left(c'\right)_n}\frac{x^m}{m!}\frac{y^n}{n!} 
	~~~(|x|+|y|<1),
\end{equation}
where $a,b,b'\in\mathbb{C}$ and $c,c'\notin \mathbb{Z}_{\leqslant 0}$.
We note that both $\Psi_1$ and $\Psi_2$ are confluent forms of $F_2$:
\begin{align*}
	\lim_{\varepsilon\rightarrow0}
	F_2[a,b,1/\varepsilon;c,c';x,\varepsilon y]
	&=\Psi_1[a,b;c,c';x,y],\\
	\lim_{\varepsilon\rightarrow0}
	F_2[a,1/\varepsilon,1/\varepsilon;c,c';\varepsilon x,\varepsilon y]
	&=\Psi_2[a;c,c';x,y].
\end{align*}
The method that we apply to $\Psi_1$ can be used to derive the asymptotic formula of $F_2$ under the condition \eqref{initial asymptotic condition}.

Table \ref{Table 1} lists the results we prove in this paper and the corresponding methods we use.

%%++++++++++++++++++++++++++++++++++++++++++++++++++++++++++++++++++++++++++++++
\begin{table}[htbp]
	\centering
	\begin{tabular}{cccc}
		\toprule
		Function & Limiting Case & Method & Results\\
		\midrule
		$\Psi_1\bigl[x,\frac{y}{x}\bigr]$& $x\to 0$& Olver's Laplace method & 
		Theorems \ref{Thm: Psi_1[x,y/x] for small x} and \ref{Thm: Psi_1[x,y/x] for small x complete}\\[2ex]
		$\Psi_1\bigl[\frac{x}{y},y\bigr]$& $y\to 0$& Series manipulation technique & Theorem \ref{Thm: Psi_1[x/y,y] asymptotics}\\[2ex]
		$\Psi_2\bigl[x,y\bigr]$& $\Re(y)\to -\infty$& Mellin-Barnes integral & Theorem \ref{Thm: Psi_1[x,y] large y}\\[2ex]
		$\Psi_2\bigl[x,y\bigr]$ & \begin{tabular}{c}
			$\Re(y)\to +\infty$, \\
			or $x,y\to \infty$
		\end{tabular}& Olver's Laplace method & Theorem \ref{Thm: Psi2 large x and y}\\[2ex]
		$\Psi_2\bigl[x,\frac{\beta}{x}\bigr]$& $x\to \infty$& \begin{tabular}{c}
			Mellin-Barnes integral and \\
			Series manipulation technique
		\end{tabular} & Theorem \ref{Thm: Psi_2[x,beta/x] for large x} \\[2ex]
		$F_2\bigl[\frac{x}{y},y\bigr]$& $y\to 0$& Series manipulation technique & Corollary \ref{Cor: F2 small y}\\
		\bottomrule
	\end{tabular}
	\caption{Results proved and methods used.}
	\label{Table 1}
\end{table}
%%++++++++++++++++++++++++++++++++++++++++++++++++++++++++++++++++++++++++++++++

Here, we would also like to briefly mention two specific examples that also demonstrate the importance of the particular type of asymptotic behaviours 
(when one variable tends to infinity while the other remains small) studied in this paper. 
When studying approximate solution of a system of three charged particles, Gasaneo \emph{et al.} \cite{Gasaneo-2001} 
considered similar asymptotic behaviours of other hypergeometric functions. Under certain restrictions,
Carvalho e Silva and Srivastava \cite{Silva-Srivastava-2001} obtained asymptotic formulas of functions defined by general double series. 
However, the theorems given by Carvalho e Silva and Srivastava \emph{cannot} be used to derive the results obtained in our paper.

\textbf{Notation.} By $f(n,z)=\mo(a_n g(z))\,(z\in\Omega)$, or briefly $f(n,z)\ll a_n g(z)$, 
we mean that there exists a constant $K>0$ independent of the summation index $n$ and the variable $z$ such that
\[|f(n,z)|\leqslant K|a_n||g(z)|,\quad n\in\ZZ_{\geqslant 0},\,z\in\Omega.\]
In general, we use $f(z)\ll g(z)$ to mean that $f(z)=\mo(g(z))$. Moreover, the generalized hypergeometric function $_pF_q$ is defined by (see, for example, \cite{AnAR} and \cite{NIST})
\begin{equation}%\label{pFq definition}
	{}_pF_q\left[\begin{matrix}
		a_1,\cdots,a_p\\
		b_1,\cdots,b_q
	\end{matrix};z\right]
	\equiv
	{}_pF_q[
	a_1,\cdots,a_p;
	b_1,\cdots,b_q;z]
	:=\sum_{n=0}^{\infty}\frac{(a_1)_n\cdots(a_p)_n}{(b_1)_n\cdots(b_q)_n}\frac{z^n}{n!},
\end{equation}
where $a_1,\cdots,a_p\in\CC$ and $b_1,\cdots,b_q\in\CC\sm\ZZ_{\leqslant 0}$. Empty products and sums are taken as $1$ and $0$, respectively.

%%###########################################################################################################################%%
\section{Preliminaries}
%%###########################################################################################################################%%
We list some asymptotic formulas for the Kummer function ${}_1F_1$ and the modified Bessel function $I_{\nu}$, which are useful in what follows. In this section, $\delta>0$ is taken to be small.

For $z\to\infty$ in $-\pi\leqslant\arg(-z)\leqslant\pi$, we have \cite[Eq. (5.8)]{LiWo}
\begin{equation}\label{1F1 asymptotics}
	{}_1F_1\left[\begin{matrix}
		a\\
		c
	\end{matrix};z\right]\sim\frac{\Gamma(c)}{\Gamma(c-a)}
	\sum_{n=0}^{\infty}\frac{\left(a\right)_n\left(1+a-c\right)_n}{n!}\left(-z\right)^{-a-n}+\frac{\Gamma(c)}{\Gamma(a)}\,\me^z
	\sum_{n=0}^{\infty}\frac{\left(1-a\right)_n\left(c-a\right)_n}{n!}z^{a-c-n}.
\end{equation}
As $z\to\infty$ in $-\frac{\pi}{2}+\delta\leqslant \arg(z)\leqslant \frac{3\pi}{2}-\delta$, we have \cite[Eq. (10.40.5)]{NIST}
\begin{equation}\label{I_nu for large z}
	I_{\nu}(z)=\frac{\me^z}{\sqrt{2\pi z}}\left(1+\mo\left(z^{-1}\right)\right)+\mi\,
	\me^{\pi\mi\nu}\frac{\me^{-z}}{\sqrt{2\pi z}}\left(1+\mo\left(z^{-1}\right)\right),
\end{equation}
where
\begin{equation}\label{I_nu - Def}
	I_\nu(z)=\left(\frac{1}{2}z\right)^{\nu}\sum_{k=0}^{\infty}\frac{(\frac{1}{4}z^2)^k}{k!\,\Gamma(\nu+k+1)}.
\end{equation}
As $a\to\infty$ in $\left|\arg(a)\right|\leqslant \pi-\delta$, we have \cite[Eq. (13.8.12)]{NIST}
\begin{equation}\label{1F1 for large a--1}
	{}_1F_1\left[\begin{matrix}
		a\\
		b
	\end{matrix};z\right]=\Gamma\left(b\right){\me^{\frac{1}{2}z}}\left(\frac{z}{a}\right)^{\frac{1}{2}(1-b)}\frac{\Gamma(a+1-b)}{\Gamma(a)}
	\left\{I_{b-1}\left(2\sqrt{az}\right){\left(1+\mo\left(a^{-1}\right)\right)}-\sqrt{\frac{z}{a}}\,I_b\left(2\sqrt{az}\right){\left(1+\mo\left(a^{-1}\right)\right)}\right\}.
\end{equation}

Choose $a$ and $z$ such that $\left|\arg(a)\right|\leqslant \pi-\delta$ and $0\leqslant\arg(z)\leqslant 2\pi$. Then
\[-\frac{\pi}{2}+\frac{\delta}{2}\leqslant \arg(\sqrt{az})\leqslant \frac{3\pi}{2}-\frac{\delta}{2}.\]
It follows from \eqref{I_nu for large z} that as $a\to\infty$ with $z\ne 0$ fixed,
\[I_{\nu}(2\sqrt{az})=\frac{1}{2\sqrt{\pi}}\left(az\right)^{-\frac{1}{4}}\left(\me^{2\sqrt{az}}\bigl(1+o(1)\bigr)
+\mi\,\me^{\pi\mi\nu}\me^{-2\sqrt{az}}\bigl(1+o(1)\bigr)\right).\]
Using \eqref{1F1 for large a--1}, we obtain that for large $a$ in $\left|\arg(a)\right|\leqslant \pi-\delta$ 
and fixed $z\ne 0$ in $0\leqslant\arg(z)\leqslant 2\pi$,
\begin{equation}\label{1F1 for large a--2}
	{}_1F_1\left[\begin{matrix}
		a\\
		b
	\end{matrix};z\right]=\Gamma\left(b\right)\frac{\Gamma(a+1-b)}{2\sqrt{\pi}\,\Gamma(a)}z^{\frac{1}{4}-\frac{1}{2}b}{\me^{\frac{1}{2}z}}a^{\frac{1}{2}b-\frac{3}{4}}
	\left(\me^{2\sqrt{az}}\bigl(1+o(1)\bigr)-\mi\,\me^{\pi\mi b}\me^{-2\sqrt{az}}\bigl(1+o(1)\bigr)\right).
\end{equation}

Finally, we shall also need the Euler-type integral \cite[Eq. (13.4.1)]{NIST}
\begin{equation}\label{1F1 Euler-type integral}
	{}_1F_1\left[\begin{matrix}
		a\\
		b
	\end{matrix};z\right]=\frac{\Gamma(b)}{\Gamma(a)\Gamma(b-a)}\int_0^1 t^{a-1}\left(1-t\right)^{b-a-1}\me^{zt}\md t,\quad \Re(b)>\Re(a)>0.
\end{equation}

%%###########################################################################################################################%%
\section{Humbert function $\Psi_1$}
%%###########################################################################################################################%%

First of all, we list a couple of identities which are useful in what follows:
\begin{subequations}
	\begin{align}
		\Psi_1[a,b;c,c';x,y]& =\sum_{n=0}^{\infty}\frac{\left(a\right)_k\left(b\right)_k}{\left(c\right)_k}\,_1F_1\left[\begin{matrix}
			a+k\\
			c'
		\end{matrix};y\right]\frac{x^k}{k!} \label{Psi_1 expansion in 1F1}\\
		& =\frac{\Gamma(c)}{\Gamma(b)\Gamma(c-b)}\int_0^1 t^{b-1}\left(1-t\right)^{c-b-1}\left(1-xt\right)^{-a}{}_1F_1\left[\begin{matrix}
			a\\
			c'
		\end{matrix};\frac{y}{1-xt}\right]\md t \label{Psi_1 Euler integral}\\
		& =\frac{1}{\Gamma(a)}\int_0^{\infty}u^{a-1}\me^{-u}\,_1F_1\left[\begin{matrix}
			b\\
			c
		\end{matrix};xu\right]{}_0F_1\left[\begin{matrix}
			-\\
			c'
		\end{matrix};yu\right]\md u. \label{Psi_1 Laplace integral}
	\end{align}
\end{subequations}
With the exception of \eqref{Psi_1 expansion in 1F1}, they arise in several places in the literature (see \cite{BrSa}). 
For instance, \eqref{Psi_1 Euler integral} is useful for numerical computations (see \cite[Appendix A]{HaLu1}), 
and \eqref{Psi_1 Laplace integral} is from \cite{WaHe}. 
The convergence conditions for these identities are given below, respectively,
\begin{align*}
	\eqref{Psi_1 expansion in 1F1}:& \quad c,c'\notin\ZZ_{\leqslant 0},\ |x|<1,|y|<\infty;\\
	\eqref{Psi_1 Euler integral}:& \quad \Re(c)>\Re(b)>0,\ c'\notin\ZZ_{\leqslant 0},\ x\notin[1,+\infty),\,y\in\CC;\\
	\eqref{Psi_1 Laplace integral}: & \quad \Re(a)>0,\ c,c'\notin\ZZ_{\leqslant 0},\ \Re(x)<1,\,y\in\CC.
\end{align*}
Moreover, $\Psi_1$ has an extension to the region (see \cite[Section 2.2]{HaLu3})
\[\DD_{\Psi_1}:=\bigl\{(x,y)\in\CC^2\colon x\ne 1,\ |{\arg}(1-x)|<\pi,\ |y|<\infty\bigr\}.\]

Before giving the first asymptotic result of $\Psi_1$ under the condition \eqref{initial asymptotic condition} 
(see Theorem \ref{Thm: Psi_1[x,y/x] for small x}), we derive a useful integral representation.

\begin{proposition}
	If $\Re(a)>0,\,b\in\CC,\,c,c'\in\CC\sm\ZZ_{\leqslant 0},\,\Re(x)>0$ and $\Re\big(\frac{y}{x}\big)>0$, then
	\begin{equation}\label{Psi_1[-x,y^2/x] integral}
		\Psi_1\left[a,b;c,c';-x,\frac{y^2}{x}\right]=
		\frac{2\,\Gamma(c')}{\Gamma(a)}x^{-a}\left(\frac{y}{x}\right)^{1-c'}\int_0^{\infty}w^{2a-c'}\me^{-\frac{w^2}{x}}\,_1F_1\left[\begin{matrix}
			b\\
			c
		\end{matrix};-w^2\right]I_{c'-1}\left(2\frac{y}{x}w\right)\md w.
	\end{equation}
\end{proposition}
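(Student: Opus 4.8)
The plan is to read \eqref{Psi_1[-x,y^2/x] integral} off the Laplace-type representation \eqref{Psi_1 Laplace integral}, whose hypotheses $\Re(a)>0$, $c,c'\notin\ZZ_{\leqslant 0}$, $\Re(x)<1$, $y\in\CC$ hold here because $\Re(x)>0$ forces $\Re(-x)<0<1$ and $y^{2}/x$ is an arbitrary complex number. Replacing $x$ by $-x$ and $y$ by $y^{2}/x$ in \eqref{Psi_1 Laplace integral} gives
\[
\Psi_1\!\left[a,b;c,c';-x,\tfrac{y^{2}}{x}\right]=\frac{1}{\Gamma(a)}\int_0^{\infty}u^{a-1}\me^{-u}\,{}_1F_1\!\left[\begin{matrix}b\\c\end{matrix};-xu\right]{}_0F_1\!\left[\begin{matrix}-\\c'\end{matrix};\tfrac{y^{2}}{x}u\right]\md u .
\]
Comparing the series \eqref{I_nu - Def} with that of ${}_0F_1$ yields the elementary identity ${}_0F_1[-;c';t]=\Gamma(c')\,t^{(1-c')/2}I_{c'-1}\!\bigl(2\sqrt{t}\,\bigr)$ on the cut plane; since $\Re(y/x)>0$ keeps $y^{2}/x$ away from $(-\infty,0]$, it applies with $t=\tfrac{y^{2}}{x}u$, all fractional powers being principal.

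Next I would substitute $u=w^{2}/x$, so that $u^{a-1}\,\md u=\tfrac{2}{x^{a}}\,w^{2a-1}\,\md w$, $-xu=-w^{2}$, and the ${}_0F_1$-factor becomes $\Gamma(c')\bigl(\tfrac{y}{x}\bigr)^{1-c'}w^{1-c'}I_{c'-1}\!\bigl(2\tfrac{y}{x}w\bigr)$. Because $\Re(x)>0$, letting $w$ run over $(0,\infty)$ makes $u=w^{2}/x$ run over the ray $\{\arg u=-\arg x\}\subset\{|\arg u|<\tfrac{\pi}{2}\}$, so to carry out this substitution one first deforms the $u$-contour from $[0,\infty)$ onto that ray. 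This is legitimate by Cauchy's theorem: the integrand is holomorphic on the sector between the two rays, integrable at $u=0$ because $\Re(a)>0$, and over the arc of radius $R$ the factor $\me^{-u}$ decays like $\me^{-R\cos(\arg x)}$, which dominates the polynomial $u^{a-1}$ and the at most $\me^{\mo(\sqrt{R})}$ growth of the ${}_1F_1$ and ${}_0F_1$ factors (cf. \eqref{1F1 asymptotics} and \eqref{I_nu for large z}). Collecting the powers $w^{2a-1}\cdot w^{1-c'}=w^{2a-c'}$ and pulling the constant $\tfrac{2\Gamma(c')}{\Gamma(a)}x^{-a}(y/x)^{1-c'}$ out of the integral then produces exactly the right-hand side of \eqref{Psi_1[-x,y^2/x] integral}.

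The computation itself is short; the only point demanding genuine care is the contour deformation $u=w^{2}/x$ for complex $x$ — i.e., checking that the arc at infinity contributes nothing and that the fractional powers ($u^{a-1}$, $(\tfrac{y^{2}}{x}u)^{(1-c')/2}$, and the $(y/x)^{1-c'}$ it produces) are all taken on one consistent branch — and this is precisely where the hypotheses $\Re(x)>0$ and $\Re(y/x)>0$ are used. A clean way to package this is to establish \eqref{Psi_1[-x,y^2/x] integral} first for $x,y>0$, where $u=w^{2}/x$ is an ordinary real substitution and every power is unambiguous, and then extend it to the connected domain $\{\Re(x)>0,\ \Re(y/x)>0\}$ by analytic continuation: the left-hand side is holomorphic there since $(-x,y^{2}/x)\in\DD_{\Psi_1}$ whenever $\Re(x)>0$, and the right-hand side because the integral converges locally uniformly — the Gaussian $\me^{-w^{2}/x}$, with $\Re(1/x)$ locally bounded below, dominates the polynomial decay of ${}_1F_1[b;c;-w^{2}]$ and the $\me^{\mo(w)}$ growth of $I_{c'-1}$ — so agreement on the real slice propagates to the whole domain.
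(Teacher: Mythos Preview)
Your proof is correct and follows essentially the same route as the paper: start from the Laplace integral \eqref{Psi_1 Laplace integral}, rewrite ${}_0F_1$ as a Bessel function, substitute $u=w^{2}/x$ (the paper does this in two steps, $u=v/x$ then $v=w^{2}$), establish the identity first for real positive $a,x,y$, and then extend by analytic continuation. Your treatment of the analytic continuation is in fact more explicit than the paper's, which simply notes that \eqref{I_nu for large z} and analytic continuation complete the proof.
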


\begin{proof}
	Assume for the moment that $a>0,\,x>0$ and $y>0$. In view of the relation
	\cite[Eq. (7.13.1.1)]{Integrals & Series v3}
	\begin{equation}\label{0F1 identity}
		{}_0F_1\left[\begin{matrix}
			-\\
			b
		\end{matrix};z\right]=\Gamma\left(b\right)z^{\frac{1-b}{2}}I_{b-1}\left(2\sqrt{z}\right)
	\end{equation}
	and the integral representation \eqref{Psi_1 Laplace integral}, we obtain
	\begin{align*}
		\Psi_1\left[a,b;c,c';-x,\frac{y^2}{x}\right]& =\frac{1}{\Gamma(a)}\int_0^{\infty}u^{a-1}\me^{-u}\,_1F_1\left[\begin{matrix}
			b\\
			c
		\end{matrix};-xu\right]{}_0F_1\left[\begin{matrix}
			-\\
			c'
		\end{matrix};\frac{y^2}{x}u\right]\md u\\
		& =\frac{\Gamma(c')}{\Gamma(a)}\int_0^{\infty}u^{a-1}\me^{-u}\,_1F_1\left[\begin{matrix}
			b\\
			c
		\end{matrix};-xu\right]\left(\frac{y^2}{x}u\right)^{\frac{1-c'}{2}}I_{c'-1}\left(2\sqrt{\frac{y^2}{x}u}\right)\md u\\
		& =\frac{\Gamma(c')}{x\,\Gamma(a)}\int_0^{\infty}\left(\frac{v}{x}\right)^{a-1}\me^{-\frac{v}{x}}\,_1F_1\left[\begin{matrix}
			b\\
			c
		\end{matrix};-v\right]\left(\frac{y^2}{x^2}v\right)^{\frac{1-c'}{2}}I_{c'-1}\left(2\frac{y}{x}\sqrt{v}\right)\md v\\
		& =\frac{2\,\Gamma(c')}{\Gamma(a)}x^{-a}\left(\frac{y}{x}\right)^{1-c'}\int_0^{\infty}w^{2a-c'}\me^{-\frac{w^2}{x}}\,_1F_1\left[\begin{matrix}
			b\\
			c
		\end{matrix};-w^2\right]I_{c'-1}\left(2\frac{y}{x}w\right)\md w.
	\end{align*}
	The proof is then completed by using \eqref{I_nu for large z} and analytic continuation.
\end{proof}

\begin{theorem}\label{Thm: Psi_1[x,y/x] for small x}
	Let $\Re(a)>0,\,b\in\CC$ and $c,c'\in\CC\sm\ZZ_{\leqslant 0}$. Then for fixed $y>0$,
	\begin{equation}\label{Psi_1[x,y/x] asymptotics--1}
		\Psi_1\left[a,b;c,c';\pm x,\frac{y}{x}\right]\sim \frac{\Gamma(c')}{\Gamma(a)}\,_1F_1\left[\begin{matrix}
			b\\
			c
		\end{matrix};\pm y\right]\left(\frac{y}{x}\right)^{a-c'}\me^{\frac{y}{x}}
	\end{equation}
	as $x\to 0$ in $\left|\arg(x)\right|\leqslant\frac{\pi}{2}-\delta$, where $\delta\in\bigl(0,\frac{\pi}{2}\bigr]$ is fixed.
\end{theorem}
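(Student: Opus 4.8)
The plan is to run Olver's form of Laplace's method on a Bessel-type integral representation of $\Psi_1\bigl[\pm x,\tfrac{y}{x}\bigr]$, in which the factor ${}_1F_1[b;c;\pm y]$ will emerge as the value at the saddle of a slowly varying amplitude. First I would record the integral representation
\[
\Psi_1\!\left[a,b;c,c';\pm x,\tfrac{y}{x}\right]
=\frac{2\,\Gamma(c')}{\Gamma(a)}\,x^{-a}\Bigl(\tfrac{\sqrt y}{x}\Bigr)^{1-c'}\!\int_0^{\infty}\! w^{2a-c'}\,\me^{-w^2/x}\,{}_1F_1\!\left[\begin{matrix}b\\c\end{matrix};\pm w^2\right]I_{c'-1}\!\left(\tfrac{2\sqrt y\,w}{x}\right)\md w ,
\]
valid for $\Re(a)>0$ and $x$ in the sector $|\arg x|\le\frac{\pi}{2}-\delta$, so that $\Re(x)>0$ and $\Re(y/x)>0$. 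For the lower sign this is \eqref{Psi_1[-x,y^2/x] integral} with $y$ replaced by $\sqrt y$; for the upper sign the identical computation — carried out first for $x,y>0$ as in the proof of the Proposition and then continued analytically in $x$ — applies with ${}_1F_1[b;c;xu]$ in place of ${}_1F_1[b;c;-xu]$, producing the same formula with ${}_1F_1[b;c;+w^2]$. Convergence at $w=0$ needs $\Re(a)>0$; at $w=\infty$ it holds once $|x|$ is small, since ${}_1F_1[b;c;\pm w^2]\ll\me^{w^2}$ by \eqref{1F1 asymptotics} while $I_{c'-1}(2\sqrt y\,w/x)\ll\me^{2\sqrt y\,w/x}$ by \eqref{I_nu for large z}, and $\Re(1/x)\ge\sin\delta/|x|$ lets $\me^{-w^2/x}$ dominate.

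Next I would insert the large-argument expansion \eqref{I_nu for large z} of $I_{c'-1}$. As $x\to0$ in the sector, $2\sqrt y\,w/x\to\infty$ with argument $-\arg x\in[-\tfrac{\pi}{2}+\delta,\tfrac{\pi}{2}-\delta]$, so \eqref{I_nu for large z} applies and its subdominant exponential $\me^{-2\sqrt y\,w/x}$ may be dropped. The decisive point is that the exponents then combine, $-w^2/x+2\sqrt y\,w/x=y/x-(w-\sqrt y)^2/x$, so that, extracting $\me^{y/x}$,
\[
\Psi_1\!\left[a,b;c,c';\pm x,\tfrac{y}{x}\right]
=\frac{\Gamma(c')}{\Gamma(a)\sqrt{\pi}}\,x^{-a}\Bigl(\tfrac{\sqrt y}{x}\Bigr)^{1-c'}\frac{x^{1/2}}{(\sqrt y)^{1/2}}\,\me^{y/x}\!\int_0^{\infty}\! g(w)\,\me^{-(w-\sqrt y)^2/x}\bigl(1+\mo(x/w)\bigr)\md w+(\text{exponentially smaller terms}),
\]
where $g(w):=w^{2a-c'-1/2}\,{}_1F_1[b;c;\pm w^2]$. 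The $w$-integral is a Laplace integral with large parameter $1/x$: the phase $(w-\sqrt y)^2$ has its only minimum at the interior point $w=\sqrt y>0$, where $g$ is continuous with $g(\sqrt y)=y^{a-c'/2-1/4}\,{}_1F_1[b;c;\pm y]$, so Olver's theorem gives $\int_0^{\infty}g(w)\,\me^{-(w-\sqrt y)^2/x}\,\md w\sim g(\sqrt y)\sqrt{\pi x}$, with the $\mo(x/w)$ term contributing only at relative order $x$.

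To legitimise the two approximations I would split $\int_0^{\infty}=\int_0^{\sqrt y-\eta}+\int_{\sqrt y-\eta}^{\sqrt y+\eta}+\int_{\sqrt y+\eta}^{\infty}$ for a small fixed $\eta\in(0,\sqrt y)$. On the two outer intervals $\Re\bigl(-(w-\sqrt y)^2/x\bigr)\le-\eta^2\sin\delta/|x|$, so, together with the crude bound ${}_1F_1[b;c;\pm w^2]\ll\me^{w^2}$ (needed only for the upper sign, where in addition $|x|$ must be small enough that $\Re(1/x)>1$) and the standard estimates for $I_{c'-1}$, these pieces are exponentially smaller than $\me^{y/x}$; the large-argument form of $I_{c'-1}$ is used only on the central interval, where $2\sqrt y\,w/x$ is genuinely large, so the non-uniformity of \eqref{I_nu for large z} near $w=0$ never intervenes. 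Collecting the powers of $x$ and $y$ and the constants — the $\tfrac{2\Gamma(c')}{\Gamma(a)}$, the $\tfrac{1}{2\sqrt\pi}$ from the Bessel prefactor, and the $\sqrt\pi$ from the Gaussian, which combine to $\tfrac{\Gamma(c')}{\Gamma(a)}$ — then yields \eqref{Psi_1[x,y/x] asymptotics--1}.

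I expect the main obstacle to be the uniformity bookkeeping rather than any single estimate: one has to confirm that Olver's Laplace method genuinely applies with $x$ complex in the closed sector — the steepest-descent direction at $w=\sqrt y$ is along the real axis, and $\Re(1/x)\ge\sin\delta/|x|>0$ keeps the Gaussian $\me^{-(w-\sqrt y)^2/x}$ genuinely decaying — and that both the tail at $w=\infty$ and the region $w\approx0$ are controlled uniformly by the elementary bounds above. Once this is in place the same analysis covers both choices of sign, the only difference being ${}_1F_1[b;c;+w^2]$ versus ${}_1F_1[b;c;-w^2]$, the latter being merely polynomially large on the tail.
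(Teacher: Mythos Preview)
Your approach is essentially the paper's: start from the Bessel-type integral \eqref{Psi_1[-x,y^2/x] integral}, replace $I_{c'-1}$ by its large-argument form \eqref{I_nu for large z}, complete the square in the exponent, and apply Olver's Laplace method at the interior maximum $w=\sqrt{y}$. The only substantive difference is the splitting. The paper cuts once, at the \emph{shrinking} point $\eta=|x/y|$: on $[0,\eta]$ the argument of $I_{c'-1}$ stays bounded, the series definition \eqref{I_nu - Def} applies, and one reads off $\mathfrak{I}_1\ll|x|^{\Re(2a-c')+1}$ directly; on $[\eta,\infty)$ the asymptotic form \eqref{I_nu for large z} holds uniformly in $w$ and Olver's theorem finishes. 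Your fixed-$\eta$ three-piece split is workable, but your treatment of the left outer interval $[0,\sqrt y-\eta]$ is internally inconsistent: you invoke the exponent $-(w-\sqrt y)^2/x$, which arises only after substituting the large-argument form of $I_{c'-1}$, while simultaneously asserting that this form is ``used only on the central interval'' --- and indeed the $\mo(x/w)$ remainder blows up at $w=0$. To make your version rigorous you would have to sub-split $[0,\sqrt y-\eta]$ at a point of order $|x|$, handling $w\lesssim|x|$ via the power-series bound on $I_{c'-1}$ (contribution polynomial in $|x|$, hence negligible against $\me^{y/x}$) and $|x|\lesssim w\le\sqrt y-\eta$ via the asymptotic bound (exponentially suppressed by $\eta^2\sin\delta/|x|$). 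That is precisely the cut the paper makes from the outset, so its two-piece decomposition is the cleaner packaging of the same idea.
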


\begin{proof}
	We only give the proof of the result for  $\Psi_1\bigl[-x,\frac{y^2}{x}\bigr]$. 
	To establish the result for $\Psi_1\bigl[+x,\frac{y^2}{x}\bigr]$, one may repeat the proof here by starting with
	\[\Psi_1\left[a,b;c,c';+x,\frac{y^2}{x}\right]=\frac{2\,\Gamma(c')}{\Gamma(a)}x^{-a}\left(\frac{y}{x}\right)^{1-c'}
	\int_0^{\infty}w^{2a-c'}\me^{-\frac{w^2}{x}}\,_1F_1\left[
	\begin{matrix}
		b\\
		c
	\end{matrix}
	;+w^2\right]I_{c'-1}\left(2\frac{y}{x}w\right)\md w.\]
	
	Assume that $\Re(a)>0,\,\Re(x)>0$ and $y>0$. Then separate the integral in \eqref{Psi_1[-x,y^2/x] integral} into two parts, namely,
	\[\Psi_1:=\Psi_1\left[a,b;c,c';-x,\frac{y^2}{x}\right]=\frac{2\,\Gamma(c')}{\Gamma(a)}x^{-a}\left(\frac{y}{x}\right)^{1-c'}
	\left(\mathfrak{I}_1+\mathfrak{I}_2\right),\]
	where $\eta=\bigl|\frac{x}{y}\bigr|$ and
	\begin{align*}
		\mathfrak{I}_1& =\int_0^{\eta}w^{2a-c'}\me^{-\frac{w^2}{x}}\,_1F_1\left[
		\begin{matrix}
			b\\
			c
		\end{matrix}
		;-w^2\right]I_{c'-1}\left(2\frac{y}{x}w\right)\md w,\\
		\mathfrak{I}_2& =\int_{\eta}^{\infty}w^{2a-c'}\me^{-\frac{w^2}{x}}\,_1F_1\left[
		\begin{matrix}
			b\\
			c
		\end{matrix}
		;-w^2\right]
		I_{c'-1}\left(2\frac{y}{x}w\right)\md w.
	\end{align*}
	Note that $\eta=\mo(x)$ as $x\to 0$. Thus, by \eqref{I_nu - Def}, we have
	\begin{align*}
		\left|\mathfrak{I}_1\right|& \leqslant\int_0^{\eta}w^{\Re(2a-c')}\me^{-\frac{w^2}{\Re(x)}}\left|{}_1F_1\left[
		\begin{matrix}
			b\\
			c
		\end{matrix}
		;-w^2\right]\right|\cdot\left|I_{c'-1}\left(2\frac{y}{x}w\right)\right|\md w\\
		& \ll\int_0^{\eta}w^{\Re(2a-c')}\left|\frac{y}{x}w\right|^{\Re(c')-1}\cdot\left|\sum_{k=0}^{\infty}\frac{(wy/x)^{2k}}{k!\,\Gamma(c'+k)}\right|\md w\\
		& \ll\eta^{1-\Re(c')}\int_0^{\eta}w^{\Re(2a)-1}\md w=\frac{1}{2\Re(a)}\eta^{\Re(2a-c')+1}\ll \left|x\right|^{\Re(2a-c')+1}.
	\end{align*}
	To estimate $\mathfrak{I}_2$, recall $\eta=\bigl|\frac{x}{y}\bigr|$ and use \eqref{I_nu for large z} to yield
	\[I_{c'-1}\left(2\frac{y}{x}w\right)=\frac{\me^{2yw/x}}{\sqrt{2\pi\cdot 2yw/x}}\left(1+\mo(\eta w^{-1})\right),\quad w\geqslant \eta.\]
	Therefore,
	\[
	\mathfrak{I}_2=\frac{1}{2\sqrt{\pi}}\left(\frac{x}{y}\right)^{\frac{1}{2}}\me^{\frac{y^2}{x}}\int_{\eta}^{\infty}w^{2a-c'-\frac{1}{2}}\,
	\me^{-\frac{1}{x}(w-y)^2}
	\,_1F_1\left[
	\begin{matrix}
		b\\
		c
	\end{matrix}
	;-w^2
	\right]\left(1+\mo(\eta w^{-1})\right)\md w.
	\]
	Using Olver's Laplace method \cite[Theorem I]{Olve}, we obtain
	\[
	\mathfrak{I}_2\sim \frac{1}{2}xy^{2a-c'-1}\me^{\frac{y^2}{x}}
	\,_1F_1\left[
	\begin{matrix}
		b\\
		c
	\end{matrix}
	;-y^2\right].
	\]
	Hence \eqref{Psi_1[x,y/x] asymptotics--1} follows from the estimates on $\mathfrak{I}_1$ and $\mathfrak{I}_2$.
\end{proof}

\begin{remark}
	The limit \eqref{Henkel's conjecture} is a direct result of \eqref{Psi_1[x,y/x] asymptotics--1}. Indeed, \eqref{Psi_1[x,y/x] asymptotics--1} implies that
	\[\lim_{z\to 0^+}\me^{-\xi^2/(2z)}z^{1/2}\,\Psi_1\left[1,\frac{1}{2};\frac{3}{2},\frac{1}{2};-z,\frac{\xi^2}{2z}\right]
	=\sqrt{\frac{\pi}{2}}\,\xi\cdot{}_1F_1\left[
	\begin{matrix}
		1/2
		\\
		3/2
	\end{matrix}~;-\frac{\xi^2}{2}\right],
	\]
	which, in view of {\rm\cite[Eq. (7.11.1) and (13.6.5)]{NIST}}, is equivalent to \eqref{Henkel's conjecture}.
\end{remark}

We can further establish the complete leading-term behaviour of $\Psi_1\bigl[x,\frac{y}{x}\bigr]$ for small $x$, 
but we have to impose different conditions on the parameters.

\begin{theorem}\label{Thm: Psi_1[x,y/x] for small x complete}
	Let $a\in\CC,\,c,c'\in\CC\sm\ZZ_{\leqslant 0}$ and $\Re(c)>\Re(b)>0$. For any fixed $y\in\CC\sm\{0\}$,
	\begin{equation}
		\Psi_1\left[a,b;c,c';x,\frac{y}{x}\right]\sim \frac{\Gamma(c')}{\Gamma(a)}\,_1F_1\left[\begin{matrix}
			b\\
			c
		\end{matrix};y\right]\left(\frac{y}{x}\right)^{a-c'}\me^{\frac{y}{x}}+\frac{\Gamma(c')}{\Gamma(c'-a)}\left(-\frac{y}{x}\right)^{-a}
	\end{equation}
	as $x\to 0$ such that $-\pi\leqslant \arg\bigl(-\frac{y}{x}\bigr)\leqslant \pi$.
\end{theorem}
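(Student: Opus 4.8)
The plan is to return to the Laplace integral representation \eqref{Psi_1 Laplace integral} (whose hypotheses $\Re(c)>\Re(b)>0$, $c,c'\notin\ZZ_{\leqslant 0}$, $\Re(x)<1$, $y\in\CC$ are exactly the conditions of the theorem, after noting $\Re(a)>0$ is forced if we want the representation — so more likely one starts from the Euler-type integral \eqref{Psi_1 Euler integral}, which only needs $\Re(c)>\Re(b)>0$ and no restriction on $a$). Concretely, write
\begin{equation*}
	\Psi_1\left[a,b;c,c';x,\frac{y}{x}\right]=\frac{\Gamma(c)}{\Gamma(b)\Gamma(c-b)}\int_0^1 t^{b-1}(1-t)^{c-b-1}(1-xt)^{-a}\,{}_1F_1\!\left[\begin{matrix}a\\c'\end{matrix};\frac{y}{x(1-xt)}\right]\md t.
\end{equation*}
As $x\to 0$ the inner argument $\frac{y}{x(1-xt)}\to\infty$, so I would substitute the two-term asymptotic expansion \eqref{1F1 asymptotics} for ${}_1F_1[a;c';\cdot]$ (valid in $-\pi\leqslant\arg(-\zeta)\leqslant\pi$, which is why the theorem asks $-\pi\leqslant\arg(-y/x)\leqslant\pi$). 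This splits the integral into a \emph{recessive} piece coming from the algebraic $(-\zeta)^{-a-n}$ terms and a \emph{dominant} piece carrying the factor $\me^{\zeta}=\me^{y/(x(1-xt))}$.

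For the dominant piece, the leading term of \eqref{1F1 asymptotics} contributes $\frac{\Gamma(c')}{\Gamma(a)}\me^{y/(x(1-xt))}\big(y/(x(1-xt))\big)^{a-c'}$ against the weight $t^{b-1}(1-t)^{c-b-1}(1-xt)^{-a}$. The exponential $\me^{y/(x(1-xt))}$ is, to leading order in $x$, equal to $\me^{y/x}\me^{yt}(1+\mo(x))$ after expanding $1/(1-xt)=1+xt+\cdots$ in the exponent; so I would pull out $\me^{y/x}(y/x)^{a-c'}$ and recognise the remaining integral $\frac{\Gamma(c)}{\Gamma(b)\Gamma(c-b)}\int_0^1 t^{b-1}(1-t)^{c-b-1}\me^{yt}\,\md t$ as precisely ${}_1F_1[b;c;y]$ via the Euler integral \eqref{1F1 Euler-type integral}. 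That reproduces the first term of the claimed asymptotics. For the recessive piece, the $n=0$ term of the first sum in \eqref{1F1 asymptotics} gives $\frac{\Gamma(c')}{\Gamma(c'-a)}(-y/(x(1-xt)))^{-a}$; against the weight this is $\frac{\Gamma(c')}{\Gamma(c'-a)}(-y/x)^{-a}\int_0^1 t^{b-1}(1-t)^{c-b-1}(1-xt)^{-a}(1-xt)^{a}\,\frac{\Gamma(c)}{\Gamma(b)\Gamma(c-b)}\md t$, and the $(1-xt)$ powers cancel exactly, leaving $\frac{\Gamma(c)}{\Gamma(b)\Gamma(c-b)}B(b,c-b)=1$; this reproduces the second term.

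The technical heart — and the step I expect to be the main obstacle — is making the interchange of the asymptotic expansion with the integral rigorous \emph{uniformly in $t\in(0,1)$}, since near $t=1$ the quantity $1-xt$ degenerates when $x$ is near $1$ in modulus; but here $x\to 0$, so $|1-xt|$ stays bounded away from $0$ and bounded above, hence the expansion \eqref{1F1 asymptotics} is uniform in $t$ and the error terms integrate to genuinely lower order. I would handle this by splitting $(0,1)$ into a neighbourhood of $t=0$ (where $t^{b-1}$ is integrable since $\Re(b)>0$) and the rest, bounding the remainder of \eqref{1F1 asymptotics} by $\mo\big(|\zeta|^{-a-N}\me^{\Re\zeta}\big)$ for the dominant block and $\mo\big(|\zeta|^{-a-N}\big)$ for the recessive block with $N=1$, and checking that after multiplying by the weight and integrating one gets $\me^{y/x}(y/x)^{a-c'}\cdot\mo(x)$ and $(-y/x)^{-a}\cdot\mo(x)$ respectively — both of which are $o$ of the retained terms. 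One must also verify the two retained terms do not cancel or coincide in order of magnitude: the ratio of the second term to the first is $\asymp \me^{-y/x}(y/x)^{c'-2a}\cdot{}_1F_1[b;c;y]^{-1}$, which for $\Re(y/x)\to+\infty$ tends to $0$ and for $\Re(y/x)\to-\infty$ blows up, so whichever term is genuinely dominant depends on $\arg(y/x)$ and both must be displayed — consistent with the statement being an asymptotic \emph{sum} rather than a single leading term. A final detail is confirming that the error from replacing $\me^{y/(x(1-xt))}$ by $\me^{y/x}\me^{yt}$ in the dominant block is controlled: writing the exponent as $\frac{y}{x}+\frac{yt}{1-xt}$ and noting $\frac{yt}{1-xt}=yt+\mo(x)$ uniformly on $(0,1)$, the mean-value bound $|\me^{\mo(x)}-1|=\mo(x)$ suffices.
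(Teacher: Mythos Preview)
Your proposal is correct and follows essentially the same route as the paper's own proof: start from the Euler-type integral \eqref{Psi_1 Euler integral}, insert the large-argument expansion \eqref{1F1 asymptotics} for the inner ${}_1F_1[a;c';\zeta]$ with $\zeta=\frac{y}{x(1-xt)}$, and identify the two resulting $t$-integrals with ${}_1F_1[b;c;y]$ (via \eqref{1F1 Euler-type integral}) and the beta integral respectively. The paper's argument is terser --- it just records $\zeta=\frac{y}{x}+yt+\mo(x)$ and substitutes --- while your explicit discussion of uniformity in $t\in(0,1)$ and integration of the error terms spells out precisely what the paper leaves implicit.
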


\begin{proof}
	Fix $y\in\CC\sm\{0\}$ and write
	\[\lambda=\lambda(u)\equiv\frac{y/x}{1+ux}.\]
	Then as $x\to 0$, $\lambda=\frac{y}{x}-yu+\mo(x)$ and $\lambda^{-1}=\mo(x)$. Use of \eqref{Psi_1 Euler integral} gives
	\begin{equation}\label{integral for Psi}
		\Psi_1\left[a,b;c,c';-x,\frac{y}{x}\right]=\frac{\Gamma(c)}{\Gamma(b)\Gamma(c-b)}
		\int_0^1 u^{b-1}\left(1-u\right)^{c-b-1}\left(1+ux\right)^{-a}g(\lambda)\md u,
	\end{equation}
	where
	\[g(\lambda):={}_1F_1\left[\begin{matrix}
		a\\
		c'
	\end{matrix};\lambda\right]={}_1F_1\left[\begin{matrix}
		a\\
		c'
	\end{matrix};\frac{y}{x}-yu+\mo(x)\right].\]
	Applying the expansion \eqref{1F1 asymptotics}, we deduce that as $x\to 0$ in $-\pi\leqslant \arg\left(-\frac{y}{x}\right)\leqslant \pi$,
	\[
	g(\lambda)=\frac{\Gamma(c')}{\Gamma(a)}\left(\frac{y}{x}\right)^{a-c'}\me^{\frac{y}{x}}\left(\mathrm{e}^{-yu}+
	\mo(x)\right)+\frac{\Gamma(c')}{\Gamma(c'-a)}\left(-\frac{y}{x}\right)^{-a}\left(1+\mo(x)\right).
	\]
	Combing this with \eqref{integral for Psi} and \eqref{1F1 Euler-type integral} completes the proof.
\end{proof}

\begin{remark}\label{Remark: asymptotics of F_3 and Humbert functions}
	With the help of the integrals given by Brychkov and Saad {\rm\cite[Eq. (3.2) and (3.15)-(3.18)]{BrSa1}},  
	one could try to find asymptotic formulas for the Appell function $F_3$ and the Humbert functions $\Phi_2,\Phi_3,\Xi_1,\Xi_2$ 
	under the condition \eqref{initial asymptotic condition}. We plan to return to this elsewhere. 
\end{remark}

Since $\Psi_1[x,y]$ is not symmetric in $x$ and $y$, we are led to study the behaviour of $\Psi_1\bigl[\frac{x}{y},y\bigr]$ for small $y$.

\begin{theorem}\label{Thm: Psi_1[x/y,y] asymptotics}
	Let $a,b\in\CC,\,c,c'\in\CC\sm\ZZ_{\leqslant 0}$ and $a-b\notin\ZZ$. For fixed $x\in\CC\sm\{0\}$,
	\begin{align}
		\Psi_1\left[a,b;c,c';\frac{x}{y},y\right]={}& \mathfrak{f}_c(b,a)\left(-\frac{x}{y}\right)^{-a}\,
		\sum_{m=0}^{N-1}a_1(m)y^m+\mathfrak{f}_c(a,b)\left(-\frac{x}{y}\right)^{-b}\,\sum_{m=0}^{N-1}a_2(m)y^m  \\
		& +\mo\left(\left|y\right|^{\Re(a)+N}+\left|y\right|^{\Re(b)+N}\right) \nonumber 
	\end{align}
	as $y\to 0$ in $\bigl|\arg\bigl(-\frac{x}{y}\bigr)\bigr|<\pi$, where $N$ is any positive integer,
	\begin{subequations}
		\begin{equation}\label{f_gamma(a,b) definition}
			\mathfrak{f}_{\gamma}(a,b):=\frac{\Gamma(\gamma)\Gamma(a-b)}{\Gamma(a)\Gamma(\gamma-b)}
		\end{equation}
		and
		\begin{align}
			a_1(m)& =\sum_{\substack{k,\ell\geqslant 0\\k+\ell=m}}
			\frac{\left(-k\right)_{\ell}\left(a\right)_k\left(1-c+a\right)_k}{\left(c'\right)_{\ell}\left(1-b+a\right)_k \ell!\,k!}x^{-k},\\
			a_2(m)& =\sum_{\substack{k,\ell\geqslant 0\\k+\ell=m}}
			\frac{\left(a-b-k\right)_{\ell}\left(b\right)_k\left(1-c+b\right)_k}{\left(c'\right)_{\ell}\left(1-a+b\right)_k \ell!\,k!}x^{-k}.
		\end{align}
	\end{subequations}
\end{theorem}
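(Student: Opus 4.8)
The plan is to write $\Psi_1\bigl[a,b;c,c';\frac xy,y\bigr]$ as a single series of Gauss functions in the large variable $x/y$ and then insert the connection formula for ${}_2F_1$ at infinity. Summing the defining double series over the $c'$-index first gives, for $|x/y|<1$,
\[
\Psi_1\Bigl[a,b;c,c';\tfrac xy,y\Bigr]=\sum_{n=0}^{\infty}\frac{(a)_n}{(c')_n\,n!}\,y^n\,{}_2F_1\!\left(a+n,b;c;\tfrac xy\right).
\]
Both sides extend holomorphically in $x/y$ to the cut plane $\CC\sm[1,\infty)$: for $\Psi_1$ via its extension to $\DD_{\Psi_1}$ noted above, and for the series because ${}_2F_1(a+n,b;c;X)$ grows in $n$ at most like $C(X)^{n}n^{\mo(1)}$ locally uniformly, which the $1/n!$ absorbs. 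In particular the identity holds as $y\to0$ with $\bigl|\arg(-x/y)\bigr|<\pi$, and for such $y$ the argument $y/x$ below is small.

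I would then apply ${}_2F_1(\alpha,\beta;\gamma;z)=\frac{\Gamma(\gamma)\Gamma(\beta-\alpha)}{\Gamma(\beta)\Gamma(\gamma-\alpha)}(-z)^{-\alpha}\,{}_2F_1(\alpha,1-\gamma+\alpha;1-\beta+\alpha;1/z)+(\alpha\leftrightarrow\beta)$ with $(\alpha,\beta,\gamma,z)=(a+n,b,c,x/y)$; this is legitimate because $a-b\notin\ZZ$ forces $(a+n)-b\notin\ZZ$ for every $n$. The first group of terms carries $(-x/y)^{-a-n}$, the second $(-x/y)^{-b}$, so $\Psi_1=S_1+S_2$ with
\[
S_1=\Bigl(-\tfrac xy\Bigr)^{-a}\sum_{n\ge0}\frac{(a)_n\,\mathfrak{f}_c(b,a+n)}{(c')_n\,n!}\Bigl(-\tfrac xy\Bigr)^{-n}y^n\,{}_2F_1\!\left(a+n,1-c+a+n;1-b+a+n;\tfrac yx\right)
\]
and the analogous $S_2$ with $(-x/y)^{-b}$ factored out. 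Using $\Gamma(z-n)=(-1)^n\Gamma(z)/(1-z)_n$ and $\Gamma(z+n)=(z)_n\Gamma(z)$, the prefactors collapse to $\mathfrak{f}_c(b,a+n)=\mathfrak{f}_c(b,a)(1-c+a)_n/(1-b+a)_n$ and $\mathfrak{f}_c(a+n,b)=\mathfrak{f}_c(a,b)(a-b)_n/(a)_n$, while $(-x/y)^{-n}y^n=(-1)^nx^{-n}y^{2n}$.

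The heart of the proof is the series manipulation. Expanding the surviving ${}_2F_1(\,\cdot\,;y/x)$ and interchanging the two summations (justified for $|y|$ small, the $n$-sum being of ${}_2F_2$-type and the $j$-sum of ${}_2F_1$-type in the small quantity $y/x$), the term of $S_1$ indexed by $(n,j)$ contributes $x^{-(n+j)}y^{2n+j}$; setting $m=2n+j$, $k=n+j$, $\ell=n$ (so $k+\ell=m$) and merging the three Pochhammer products through $(\alpha)_\ell(\alpha+\ell)_{k-\ell}=(\alpha)_k$ together with $(-k)_\ell=(-1)^\ell k!/(k-\ell)!$, the coefficient of $y^m$ becomes exactly $\mathfrak{f}_c(b,a)\,a_1(m)$ — the would-be terms with $\ell>k$ disappear because $(-k)_\ell=0$, which matches the range $j\ge0$ allows. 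For $S_2$ the same bookkeeping with $m=n+j$, $k=j$, $\ell=n$ produces $\mathfrak{f}_c(a,b)\,a_2(m)$, the needed identity being $(a-b)_\ell(1-a+b)_k=(a-b-k)_\ell(1-a+b-\ell)_k$, both sides equal to $(-1)^k(a-b-k)_m$ by $(\alpha)_n=(-1)^n(1-\alpha-n)_n$ and $(\alpha)_{k+\ell}=(\alpha)_k(\alpha+k)_\ell$. Truncating each series at $m=N-1$, the rearranged double series for $\sum_m a_1(m)y^m$ and $\sum_m a_2(m)y^m$ converge absolutely near $y=0$, hence are holomorphic there with tails $\mo(|y|^N)$; since $\bigl|\arg(-x/y)\bigr|<\pi$ keeps $|(-x/y)^{-a}|=\mo(|y|^{\Re(a)})$ and $|(-x/y)^{-b}|=\mo(|y|^{\Re(b)})$ with a constant independent of the approach of $y$ to $0$, this yields the stated expansion with error $\mo(|y|^{\Re(a)+N}+|y|^{\Re(b)+N})$.

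The main obstacle will not be any single computation but the two rearrangements: justifying the ${}_2F_1$-representation of $\Psi_1$ on the cut plane, where its defining double series diverges; and dealing with the fact that after inserting the connection formula the resulting multiple series is only conditionally convergent unless $|y|$ is small, so the interchange of summations and the regrouping by powers of $y$ must be carried out for $|y|<\mathrm{const}(x)$ and then the asymptotics read off, with the uniformity of the $\mo$-constants as $y\to0$ extracted from the single hypothesis $\bigl|\arg(-x/y)\bigr|<\pi$.
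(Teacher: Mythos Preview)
Your argument is correct and the Pochhammer bookkeeping checks out, including the identity $(a-b)_\ell(1-a+b)_k=(a-b-k)_\ell(1-a+b-\ell)_k$ (both equal $(-1)^k\Gamma(a-b+\ell)/\Gamma(a-b-k)$ via the reflection formula). However, your route is genuinely different from the paper's. The paper does not pass through the ${}_2F_1$ connection formula at all: it simply quotes the convergent analytic continuation
\[
\Psi_1[a,b;c,c';X,y]=\mathfrak{f}_c(b,a)(-X)^{-a}\sum_{k\ge0}{}_1F_1\!\left[\begin{matrix}-k\\c'\end{matrix};y\right]\frac{(a)_k(1-c+a)_k}{(1-b+a)_k\,k!}X^{-k}+\mathfrak{f}_c(a,b)(-X)^{-b}\sum_{k\ge0}{}_1F_1\!\left[\begin{matrix}a-b-k\\c'\end{matrix};y\right]\frac{(b)_k(1-c+b)_k}{(1-a+b)_k\,k!}X^{-k},
\]
valid for $|X|>1$ (established in an earlier paper of the same authors), sets $X=x/y$, and reads off the coefficients of $y^m$ directly: in the first sum the ${}_1F_1[-k;c';y]$ is a polynomial of degree $k$, so the $(k,\ell)$ term contributes $y^{k+\ell}x^{-k}$ with exactly the coefficient in $a_1(m)$; the second sum is handled ``similarly''. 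No connection formula, no double rearrangement, and the error term falls out because the quoted series is absolutely convergent. What your approach buys is self-containment: you are in effect re-deriving that continuation formula on the fly from the ${}_2F_1$ machinery, at the cost of the two analytic-continuation and interchange steps you correctly flag as the real work. The paper's approach buys brevity by outsourcing precisely those steps to the cited result.
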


\begin{proof}
	Recall the series expansion \cite[Theorem 3.8]{HaLu3}: for $\left|\arg(-x)\right|<\pi,\,|x|>1$ and $|y|<\infty$,
	\begin{equation}\label{Psi_1 series for |x|>1}
		\begin{split}
			\Psi_1[a,b;c,c';x,y]={}& \mathfrak{f}_c(b,a)\left(-x\right)^{-a}\sum_{k=0}^{\infty}{}_1F_1\left[\begin{matrix}
				-k\\
				c'
			\end{matrix};y\right]\frac{\left(a\right)_k\left(1-c+a\right)_k}{\left(1-b+a\right)_k k!}\frac{1}{x^k}\\
			&+\mathfrak{f}_c(a,b)\left(-x\right)^{-b}\sum_{k=0}^{\infty}{}_1F_1\left[\begin{matrix}
				a-b-k\\
				c'
			\end{matrix};y\right]\frac{\left(b\right)_k\left(1-c+b\right)_k}{\left(1-a+b\right)_k k!}\frac{1}{x^k}.
		\end{split}
	\end{equation}
	Denote the series on the right by $U_1(x,y)$ and $U_2(x,y)$, respectively. Then we have
	\begin{align}
		U_1\left(\frac{x}{y},y\right)& =\sum_{k=0}^{N-1}\sum_{\ell=0}^{\infty}\frac{\left(-k\right)_{\ell}}{\left(c'\right)_{\ell}}\frac{y^{\ell}}{\ell!}\cdot 
		\frac{\left(a\right)_k\left(1-c+a\right)_k}{\left(1-b+a\right)_k k!}\frac{y^k}{x^k}+\mo\left(\left|y\right|^N\right) \nonumber\\
		& =\sum_{m=0}^{N-1}a_1(m)y^m+\mo\left(\left|y\right|^N\right), \label{U_1 expansion}
	\end{align}
	and similarly
	\begin{equation}\label{U_2 expansion}
		U_2\left(\frac{x}{y},y\right)=\sum_{m=0}^{N-1}a_2(m)y^m+\mo\left(\left|y\right|^N\right).
	\end{equation}
	The expansion for $\Psi_1\big[\frac{x}{y},y\big]$ therefore follows from \eqref{Psi_1 series for |x|>1}-\eqref{U_2 expansion}.
\end{proof}

\begin{remark} \label{rem3.7} 
	As an application, notice that for $x\to\infty$ and $y>0$ fixed, one has
	\[\Psi_1\left[1,\frac{1}{2};\frac{3}{2},\frac{1}{2};-x,\frac{y}{x}\right]\sim \frac{\pi}{2}x^{-\frac{1}{2}}-x^{-1}+\mo\left(x^{-\frac{3}{2}}\right).\]
	This new asymptotic identity, especially the $y$-independence of the leading two terms, proves 
	the large-time universality in the time-space correlator in the $1D$ Glauber-Ising model at temperature $T=0$ {\rm\cite{Henk}}.
\end{remark}
%%###########################################################################################################################%%
\section{Humbert function $\Psi_2$}
%%###########################################################################################################################%%

Clearly, $\Psi_2$ satisfies the symmetry relation $\Psi_2[a;c,c';x,y]=\Psi_2[a;c',c;y,x]$ and admits the series expansion
\begin{equation}\label{Psi_2 series expansion--1}
	\Psi_2[a;c,c';x,y]=\sum_{n=0}^{\infty}\frac{\left(a\right)_n}{\left(c'\right)_n}\,_1F_1\left[\begin{matrix}
		a+n\\
		c
	\end{matrix};x\right]\frac{y^n}{n!},\quad |x|<\infty,|y|<\infty.
\end{equation}
Using the Kummer transformation \cite[Eq. (13.2.39)]{NIST}, we get
\begin{equation}\label{Psi_2 series expansion--2}
	\Psi_2[a;c,c';x,y]=\me^x\sum_{n=0}^{\infty}\frac{\left(a\right)_n}{\left(c'\right)_n}\,_1F_1\left[\begin{matrix}
		c-a-n\\
		c
	\end{matrix};-x\right]\frac{y^n}{n!},\quad |x|<\infty,|y|<\infty.
\end{equation}

Let us first study the asymptotic behaviour of $\Psi_2[x,y]$ as $y\rightarrow\infty$ in the left half-plane using the Mellin-Barnes integral representation of $\Psi_2$.  

\begin{theorem}\label{Thm: Psi_1[x,y] large y}
	{\rm(i)} Let $a\in\CC,\,c,c'\in\CC\sm\ZZ_{\leqslant 0},\,\delta\in\bigl(0,\frac{\pi}{2}\bigr]$ and
	\[
	\mathbb{V}_{\Psi_2}=\left\{(x,y)\in\CC^2:x\ne 0,\ y\ne 0,\ 0\leqslant \arg(x)\leqslant 2\pi,\ \left|\arg(-y)\right| \leqslant \frac{\pi}{2}-\delta \right\}.
	\]
	Then
	\begin{subequations}
		\begin{align}\label{Psi_2 Mellin-Barnes integral}
			\Psi_2[a;c,c';x,y] &=\frac{1}{2\pi\mi}\frac{\Gamma(c')}{\Gamma(a)}\int_{\mathfrak{L}_{\mi\sigma\infty}}{}_1F_1\left[\begin{matrix}
				a+s\\
				c
			\end{matrix};x\right]\frac{\Gamma(a+s)}{\Gamma(c'+s)}\Gamma(-s)\left(-y\right)^s\md s,
		\end{align}
		where the path $\mathfrak{L}_{\mi\sigma\infty}$, starting at $\sigma-\mi\infty$ and ending at $\sigma+\mi\infty$, 
		is a vertical line intended if necessary to separate the poles of $\Gamma(a+s)$ from the poles of $\Gamma(-s)$.
		
		{\rm(ii)} Let $a\in\CC$ and $c,c',c'-a\in\CC\sm\ZZ_{\leqslant 0}$. When $(x,y)\in\mathbb{V}_{\Psi_2}$ and $|y|\to\infty$, we have
		\begin{align}
			\Psi_2[a;c,c';x,y] &\sim\frac{\Gamma(c')}{\Gamma(c'-a)}\sum_{n=0}^{\infty}{}_1F_1\left[\begin{matrix}
				-n\\
				c
			\end{matrix};x\right]\frac{\left(a\right)_n(1+a-c')_n}{n!}(-y)^{-a-n}.
		\end{align}
	\end{subequations}
\end{theorem}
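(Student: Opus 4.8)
\emph{Part (i).} The plan is to derive \eqref{Psi_2 Mellin-Barnes integral} directly from the series \eqref{Psi_2 series expansion--1} by a residue computation; write $\Theta(s):={}_1F_1\!\left[\begin{matrix}a+s\\c\end{matrix};x\right]\frac{\Gamma(a+s)}{\Gamma(c'+s)}\Gamma(-s)(-y)^s$ for the proposed integrand. First I would check that $\int_{\mathfrak{L}_{\mi\sigma\infty}}\Theta(s)\,\md s$ converges absolutely on $\mathbb{V}_{\Psi_2}$: on a vertical line $\Re(s)=\sigma_0$, Stirling's formula gives $\bigl|\frac{\Gamma(a+s)}{\Gamma(c'+s)}\Gamma(-s)\bigr|=\mo\bigl(|\Im(s)|^{p}\me^{-\frac{\pi}{2}|\Im(s)|}\bigr)$, while $|(-y)^s|=|y|^{\sigma_0}\me^{-\Im(s)\arg(-y)}\leqslant|y|^{\sigma_0}\me^{(\frac{\pi}{2}-\delta)|\Im(s)|}$ since $|\arg(-y)|\leqslant\frac{\pi}{2}-\delta$ on $\mathbb{V}_{\Psi_2}$; the only nonstandard factor is ${}_1F_1\!\left[\begin{matrix}a+s\\c\end{matrix};x\right]$, for which \eqref{1F1 for large a--2}, with its first parameter replaced by $a+s$, applies (legitimately, because $x\neq 0$ and $0\leqslant\arg(x)\leqslant 2\pi$ on $\mathbb{V}_{\Psi_2}$ and $\arg(a+s)$ stays in an admissible cone as $|s|\to\infty$, while for bounded $s$ it is an entire function of $s$) and yields ${}_1F_1\!\left[\begin{matrix}a+s\\c\end{matrix};x\right]=\mo\bigl(|s|^{q}\me^{C\sqrt{|s|}}\bigr)$. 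Together these give $\Theta(s)=\mo\bigl(|\Im(s)|^{p+q}\me^{-\delta|\Im(s)|+C\sqrt{|\Im(s)|}}\bigr)$, which is integrable. Closing the contour to the right by circular arcs of radius $R=N+\tfrac12$ (so as to avoid the poles $s=0,1,2,\dots$ of $\Gamma(-s)$) and letting $N\to\infty$, the same bounds — the super-exponential decay of $\Gamma(-s)$ now dominating both the $\me^{C\sqrt R}$ growth of ${}_1F_1$ and the $|y|^{R}$ growth of $(-y)^s$ — show the arc contributions vanish. Hence, by the residue theorem, $\frac{1}{2\pi\mi}\int_{\mathfrak{L}_{\mi\sigma\infty}}\Theta(s)\,\md s=-\sum_{m\geqslant 0}\Res_{s=m}\Theta(s)$; using $\Res_{s=m}\Gamma(-s)=\frac{(-1)^{m+1}}{m!}$, $\frac{\Gamma(a+m)}{\Gamma(a)}=(a)_m$, $\frac{\Gamma(c')}{\Gamma(c'+m)}=\frac{1}{(c')_m}$ and $(-1)^m(-y)^m=y^m$, this sum collapses to the right-hand side of \eqref{Psi_2 series expansion--1}, i.e.\ to $\Psi_2[a;c,c';x,y]$. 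One also checks that $\sigma$ may be chosen (indented if needed) so that $\mathfrak{L}_{\mi\sigma\infty}$ separates the poles $s=-a-n$ of $\Gamma(a+s)$ from the poles $s=m$ of $\Gamma(-s)$, which is possible precisely when $a\notin\ZZ_{\leqslant 0}$; the degenerate case $a\in\ZZ_{\leqslant 0}$, where $\Psi_2$ is a polynomial, can be checked directly.

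\emph{Part (ii).} With \eqref{Psi_2 Mellin-Barnes integral} established, the asymptotic expansion follows by displacing the contour to the left and collecting residues. Fix $N\in\ZZ_{\geqslant 1}$ and shift $\mathfrak{L}_{\mi\sigma\infty}$ onto the vertical line $\Re(s)=\kappa_N$ with $-\Re(a)-N-1<\kappa_N<-\Re(a)-N$, crossing exactly the simple poles $s=-a,-a-1,\dots,-a-N$ of $\Gamma(a+s)$ (no others, as $1/\Gamma(c'+s)$ is entire). The horizontal segments at $\Im(s)=\pm T$ contribute $\mo\bigl(T^{p+q}\me^{-\delta T+C\sqrt T}\bigr)\to 0$ as $T\to\infty$, again by the estimates of part (i) and $|\arg(-y)|\leqslant\frac{\pi}{2}-\delta$. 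Hence the residue theorem gives
\[
\Psi_2[a;c,c';x,y]=\frac{\Gamma(c')}{\Gamma(a)}\sum_{n=0}^{N}\Res_{s=-a-n}\Theta(s)+\frac{\Gamma(c')}{2\pi\mi\,\Gamma(a)}\int_{\Re(s)=\kappa_N}\Theta(s)\,\md s.
\]
Using $\Res_{s=-a-n}\Gamma(a+s)=\frac{(-1)^n}{n!}$, the evaluation ${}_1F_1\!\left[\begin{matrix}a+s\\c\end{matrix};x\right]\big|_{s=-a-n}={}_1F_1\!\left[\begin{matrix}-n\\c\end{matrix};x\right]$, the relation $\frac{\Gamma(a+n)}{\Gamma(a)}=(a)_n$, and the reflection identity $\frac{1}{\Gamma(c'-a-n)}=\frac{(-1)^{n}(1+a-c')_n}{\Gamma(c'-a)}$ (here $\Gamma(c'-a)$ is finite and nonzero since $c'-a\notin\ZZ_{\leqslant 0}$), the $n$-th residue becomes $\frac{\Gamma(c')}{\Gamma(c'-a)}{}_1F_1\!\left[\begin{matrix}-n\\c\end{matrix};x\right]\frac{(a)_n(1+a-c')_n}{n!}(-y)^{-a-n}$, exactly the asserted terms. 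Finally, on $\Re(s)=\kappa_N$ one has $|(-y)^s|=|y|^{\kappa_N}\me^{-\Im(s)\arg(-y)}$, so the remainder integral is $\mo(|y|^{\kappa_N})=o(|y|^{-\Re(a)-N})$; since the discarded term $n=N$ is itself $\mo(|y|^{-\Re(a)-N})$, the partial sum over $n=0,\dots,N-1$ reproduces $\Psi_2$ with error $\mo(|y|^{-\Re(a)-N})$, and as $N$ is arbitrary this is the claimed asymptotic expansion.

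\emph{Main obstacle.} The delicate step, and the one I would carry out in full detail, is the uniform control of ${}_1F_1\!\left[\begin{matrix}a+s\\c\end{matrix};x\right]$ as $|s|\to\infty$ along the vertical contours and along the closing arcs; everything else — absolute convergence of the Mellin--Barnes integral, vanishing of the arcs and of the horizontal segments, the residue bookkeeping, and the $\mo(|y|^{\kappa_N})$ tail bound — is a routine consequence of Stirling's formula together with the restriction $|\arg(-y)|\leqslant\frac{\pi}{2}-\delta$. That ${}_1F_1$ bound is exactly what \eqref{1F1 for large a--2} supplies, and it is the reason the sector $0\leqslant\arg(x)\leqslant 2\pi$ enters the definition of $\mathbb{V}_{\Psi_2}$. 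A minor secondary point is the admissibility/indentation of $\mathfrak{L}_{\mi\sigma\infty}$ separating the two families of poles, and the harmless separate treatment of $a\in\ZZ_{\leqslant 0}$.
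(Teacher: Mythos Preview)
Your proposal is correct and follows essentially the same route as the paper: verify absolute convergence of the Mellin--Barnes integral on $\mathfrak{L}_{\mi\sigma\infty}$ via Stirling together with the large-first-parameter bound \eqref{1F1 for large a--2} for ${}_1F_1$, close to the right and collect the residues at $s=m$ to recover the series \eqref{Psi_2 series expansion--1}, then for (ii) shift to the left and collect residues at $s=-a-n$ with an $\mo(|y|^{\kappa_N})$ remainder.

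The one substantive difference is in part (i): the paper does \emph{not} close the contour for arbitrary $(x,y)\in\mathbb{V}_{\Psi_2}$. Instead it restricts to the subdomain $\widetilde{\mathbb{V}}_{\Psi_2}$ with $|y|<1/\me$, so that $\delta_1:=1+\log|y|<0$ and the semicircle estimate becomes immediate; it then passes to the full $\mathbb{V}_{\Psi_2}$ by analytic continuation (both sides being analytic there, the integral by Step~1). Your direct argument is also valid, but your justification ``the super-exponential decay of $\Gamma(-s)$ dominates the $|y|^R$ growth of $(-y)^s$'' is accurate only near the real axis: near the endpoints $\theta=\pm\pi/2$ of the arc the decay of $\Gamma(-s)$ is merely exponential, $\sim\me^{-\pi R/2}$, and one must use $|\arg(-y)|\leqslant\frac{\pi}{2}-\delta$ to get the net factor $\me^{-R\delta}$. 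The paper's two-step approach (small $|y|$ first, then analytic continuation) sidesteps this case analysis at the cost of one extra sentence. Either way the argument goes through; just be aware that your sketch of the arc estimate needs this refinement if you choose not to restrict $|y|$.
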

\begin{proof}
	(i) The proof will be divided into two steps. Firstly, we prove the integral (\ref{Psi_2 Mellin-Barnes integral}) exists for $(x,y)\in\mathbb{V}_{\Psi_2}$. 
	Secondly, we show by using Cauchy's residue theorem that the integral is equal to $\Psi_2$ when $x$ and $y$ are appropriately restricted. 
	Then by analytic continuation, \eqref{Psi_2 Mellin-Barnes integral} is valid throughout $\mathbb{V}_{\Psi_2}$.
	
	\textbf{Step 1.}	We can formally obtain \eqref{Psi_2 Mellin-Barnes integral} from \eqref{Psi_2 series expansion--1} 
	with the help of Ramanujan's master theorem \cite[p. 107]{AEGHH}. To check the validity, we first denote the integrand by
	\begin{equation}\label{Psi(s) definition}
		\Psi(s):={}_1F_1\left[\begin{matrix}
			a+s\\
			c
		\end{matrix};x\right]\frac{\Gamma(a+s)}{\Gamma(c'+s)}\Gamma(-s)(-y)^s.
	\end{equation}
	Recall the estimate \cite[Eq. (3.4)]{HaLu3}: as $t\to\pm \infty$,
	\begin{equation}\label{estimate for Psi(s)--1}
		\left|\frac{\Gamma(a+\sigma+\mi t)}{\Gamma(c'+\sigma+\mi t)}\Gamma(-\sigma-\mi t)(-y)^{\sigma+\mi t}\right|
		=\mo\left(\left|t\right|^{\Re(a-c')-\sigma-\frac{1}{2}}\me^{-\left(\frac{\pi}{2}\pm\arg(-y)\right)|t|}\right).
	\end{equation}
	Using \eqref{1F1 for large a--2}, we obtain that as $t\to\pm \infty$,
	\begin{equation}\label{estimate for Psi(s)--2}
		\left|{}_1F_1\left[\begin{matrix}
			a+\sigma+\mi t\\
			c
		\end{matrix};x\right]\right|=\mo\left(\left|t\right|^{\frac{1}{4}-\frac{1}{2}\Re(c)}\me^{3\sqrt{|xt|}}\right).
	\end{equation}
	Hence when $\left|\arg(-y)\right|\leqslant \frac{\pi}{2}-\delta$, the integrand $\Psi(s)$ decays exponentially as 
	$|s|\to\infty$ along the path $\mathfrak{L}_{\mi\sigma\infty}$. Therefore, the integral in \eqref{Psi_2 Mellin-Barnes integral} is certainly convergent in $\mathbb{V}_{\Psi_2}$.
	
	\textbf{Step 2.} Let us show that in a region $\widetilde{\mathbb{V}}_{\Psi_2}\subset \mathbb{V}_{\Psi_2}$, 
	the integral in \eqref{Psi_2 Mellin-Barnes integral} yields the series representation \eqref{Psi_2 series expansion--1} of $\Psi_2$.  Define
	\[\widetilde{\mathbb{V}}_{\Psi_2}
	=\left\{(x,y)\in\CC^2:x\ne 0, \ 0<|y|<\frac{1}{\mathrm{e}},\ 0\leqslant \arg(x)\leqslant 2\pi,\ \left|\arg(-y)\right| 
	\leqslant \frac{\pi}{2}-\delta \right\}.\]
	We close the path of integration by the semi-circle $\mathfrak{C}$, that is parameterized by 
	$s=(N+1/2)\me^{\mi\theta}$ $(|\theta|\leqslant \pi/2)$, and then we let $N\to\infty$ through integral values. Recall the estimate \cite[p. 7]{HaLu3}
	\[\left|\frac{\Gamma(a+s)}{\Gamma(c'+s)}\Gamma(-s)(-y)^s\right|
	=\left(N^{\Re(a-c')-\frac{1}{2}-\left(N+\frac{1}{2}\right)\cos\theta}\me^{\left(N+\frac{1}{2}\right)(\delta_1\cos\theta+\delta_2|\sin\theta|)}\right),
	\quad s\to\infty,\,s\in\mathfrak{C},\]
	where $\delta_1:=1+\log|y|<0$ and $\delta_2:=|\theta|+\left|\arg(-y)\right|-\pi \leqslant-\delta<0$. 
	It follows from \eqref{1F1 for large a--2} that
	\[\left|{}_1F_1\left[\begin{matrix}
		a+s\\
		c
	\end{matrix};x\right]\right|=\mo\left(N^{\frac{1}{4}-\frac{1}{2}\Re(c)}\me^{3\sqrt{N|x|}}\right),\quad s\to\infty,\,s\in\mathfrak{C}.\]
	So the integral on the semi-circle $\mathfrak{C}$ tends to zero as $N\to\infty$. 
	Finally, we get the representation \eqref{Psi_2 series expansion--1} from Cauchy's residue theorem and the assertion that (\cite[p. 7]{AnAR})
	\[\underset{s=n}{\Res}\ \Gamma(-s)=\frac{\left(-1\right)^{n-1}}{n!}.\]
	
	(ii) Let $I$ denote the right-hand side of \eqref{Psi_2 Mellin-Barnes integral}, and let $\Psi(s)$ denote the integrand given by \eqref{Psi(s) definition}. 
	Choose the positive integer $M\geqslant \max\{1,\Re(-a)\}$ and shift the integration contour to the left 
	(which is permissible on account of the exponential decay of the integrand).
	
	Note that $s=-a-n\ (n\in\ZZ_{\geqslant 0})$ are the only poles of $\Psi(s)$. Therefore,
	\[
	\underset{s=-a-n}{\mathrm{Res}}\Psi(s)
	={}_1F_1\left[\begin{matrix}
		-n\\
		c
	\end{matrix};x\right]\frac{\Gamma(a+n)}{\Gamma(c'-a-n)}\frac{(-1)^n}{n!}(-y)^{-a-n}.
	\]
	By Cauchy's residue theorem, we obtain
	\begin{align*}
		I& =\frac{\Gamma(c')}{\Gamma(a)}\sum_{n=0}^M{}_1F_1\left[\begin{matrix}
			-n\\
			c
		\end{matrix};x\right]\frac{\Gamma(a+n)}{\Gamma(c'-a-n)}\frac{(-1)^n}{n!}(-y)^{-a-n}
		+\underbrace{\frac{1}{2\pi\mi}\frac{\Gamma(c')}{\Gamma(a)}\int_{C_M}\Psi(s)\md s}_{\text{denoted by }R_M(y)}\\
		& =\frac{\Gamma(c')}{\Gamma(c'-a)}\sum_{n=0}^M{}_1F_1\left[\begin{matrix}
			-n\\
			c
		\end{matrix};x\right]\frac{(a)_n(1+a-c')_n}{n!}(-y)^{-a-n}+R_M(y),
	\end{align*}
	where $C_M$ denotes the vertical line $\Re(s)=\Re(-a)-M-1/2$. On account of \eqref{estimate for Psi(s)--1} and 
	\eqref{estimate for Psi(s)--2}, we find that the integral $R_M(y)$ is convergent and that 
	$R_M(y)=\mo_M(|y|^{-\Re(a)-M-\frac{1}{2}})$, which completes the proof.
\end{proof}

We can also obtain the asymptotics of $\Psi_2[x,y]$ as $y\to\infty$ in the right half-plane.

\begin{theorem}\label{Thm: Psi2 large x and y}
	Let $\Re(a)>0$ and $c,c'\in\CC\sm\ZZ_{\leqslant 0}$. Fix $\delta\in\bigl(0,\frac{\pi}{2}\bigr]$ and let
	\[S_{\delta}:=\left\{z\in\CC: z\ne 0,\ \left|\arg(z)\right|\leqslant \frac{\pi}{2}-\delta\right\}.\]
	\quad{\rm(i)} As $t\to+\infty$, for $x,y\in S_{\delta}$ with $0<K_1\leqslant |x|,|y|\leqslant K_2<\infty$,
	\begin{subequations}
		\begin{align}\label{Psi_2[tx,ty] for large t}
			\Psi_2[a;c,c';tx,ty]\sim \frac{\Gamma(c)\Gamma(c')}{2\sqrt{\pi}\,\Gamma(a)}x^{\frac{1}{4}-\frac{1}{2}c}y^{\frac{1}{4}-\frac{1}{2}c'}
			\left(\sqrt{x}+\sqrt{y}\right)^{2a-c-c'}t^{a-c-c'+\frac{1}{2}}\me^{t(\sqrt{x}+\sqrt{y})^2}.
		\end{align}
		\quad{\rm(ii)} For fixed $x\in\CC\sm\{0\}$, as $y\to\infty$ with $y\in S_{\delta}$,
		\begin{align}\label{Psi_2[x,y] for large y in right half-plane}
			\Psi_2[a;c,c';x,y]\sim \frac{\Gamma(c)\Gamma(c')}{2\sqrt{\pi}\,\Gamma(a)} 
			x^{\frac{1}{4}-\frac{1}{2}c}y^{\frac{1}{4}-\frac{1}{2}c'}\left(\sqrt{x}+\sqrt{y}\right)^{2a-c-c'}\me^{(\sqrt{x}+\sqrt{y})^2}.
		\end{align}
	\end{subequations}
\end{theorem}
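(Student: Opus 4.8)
The plan is to follow the route of Theorem~\ref{Thm: Psi_1[x,y/x] for small x}: represent $\Psi_2$ by a Laplace-type integral, convert the two $_0F_1$-factors into modified Bessel functions, insert their large-argument asymptotics, and finish with Olver's Laplace method. Starting from the defining double series and using $(a)_{m+n}/\Gamma(a)=\tfrac1{\Gamma(a)}\int_0^\infty u^{a+m+n-1}\me^{-u}\,\md u$, termwise integration (legitimate for $\Re(a)>0$) yields
\[
\Psi_2[a;c,c';x,y]=\frac1{\Gamma(a)}\int_0^\infty u^{a-1}\me^{-u}\,{}_0F_1\!\left[\begin{matrix}-\\c\end{matrix};xu\right]{}_0F_1\!\left[\begin{matrix}-\\c'\end{matrix};yu\right]\md u ,
\]
which is also the confluent ($\varepsilon\to0$) limit of \eqref{Psi_1 Laplace integral}. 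By \eqref{0F1 identity} the two factors become $\Gamma(c)(xu)^{(1-c)/2}I_{c-1}(2\sqrt{xu})$ and $\Gamma(c')(yu)^{(1-c')/2}I_{c'-1}(2\sqrt{yu})$.

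Assume first that $x,y>0$. In case~(i) substitute $u=t(\sqrt x+\sqrt y)^2 w$, and in case~(ii) substitute $u=(\sqrt x+\sqrt y)^2 w$. Once the leading term of \eqref{I_nu for large z} is inserted, the product $\me^{-u}\cdot\me^{2\sqrt{xu}}\cdot\me^{2\sqrt{yu}}$ (with $tx,ty$ in case~(i)) collapses to $\me^{\lambda\phi(w)}$, where
\[
\phi(w)=-w+2\sqrt w ,\qquad
\lambda=\begin{cases} t(\sqrt x+\sqrt y)^2, & \text{case (i)},\\[0.6ex] (\sqrt x+\sqrt y)^2, & \text{case (ii)},\end{cases}
\]
and $\lambda\to+\infty$ as $t\to+\infty$ (resp. as $y\to\infty$). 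The function $\phi$ has a single nondegenerate interior maximum at $w=1$, with $\phi(1)=1$ and $\phi''(1)=-\tfrac12$, which produces the factor $\me^{t(\sqrt x+\sqrt y)^2}$ (resp. $\me^{(\sqrt x+\sqrt y)^2}$) in \eqref{Psi_2[tx,ty] for large t}--\eqref{Psi_2[x,y] for large y in right half-plane}.

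Next I would split the $w$-integral at a fixed $w_1\in(0,\tfrac14)$. On $(0,w_1)$ the Bessel asymptotics are unavailable, but bounding $I_{c-1},I_{c'-1}$ by the series \eqref{I_nu - Def}, exactly as the quantity $\mathfrak I_1$ was estimated in the proof of Theorem~\ref{Thm: Psi_1[x,y/x] for small x}, shows this piece is $o(\me^{\lambda})$ and hence negligible against the main term. On $(w_1,\infty)$ I would substitute the leading term of \eqref{I_nu for large z}; the $\mo((\cdot)^{-1})$ remainders there contribute only relative corrections of order $\mo(\lambda^{-1/2})$. What remains is $\int_{w_1}^{\infty}w^{\,a-\frac12-\frac c2-\frac{c'}2}\me^{\lambda\phi(w)}\,\md w$ times explicit powers of $x,y$ (and $t$); Olver's Laplace method \cite[Theorem~I]{Olve}, applied just as to $\mathfrak I_2$ above, turns this into $\me^{\lambda}\sqrt{2\pi/(\lambda|\phi''(1)|)}=2\sqrt{\pi/\lambda}\,\me^{\lambda}$ times the algebraic factors evaluated at $w=1$. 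Collecting the constants---$\Gamma(c)\Gamma(c')/\Gamma(a)$, the $1/(4\pi)$ from the two Bessel amplitudes, the Gaussian factor $2\sqrt{\pi/\lambda}$, and the powers of $x,y,t$ produced by the change of variable---should reproduce \eqref{Psi_2[tx,ty] for large t} and \eqref{Psi_2[x,y] for large y in right half-plane} when $x,y>0$. Both sides are analytic in $x,y$ away from $(-\infty,0]$, so the formulas then extend to $x,y\in S_\delta$ (resp. to the fixed nonzero $x$ and $y\in S_\delta$ of part~(ii)) by analytic continuation, as in the proof of \eqref{Psi_1[-x,y^2/x] integral} and of Theorem~\ref{Thm: Psi_1[x,y/x] for small x}; compactness of the $(x,y)$-range in case~(i) makes the estimate uniform there.

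The genuinely non-routine work lies in the error analysis. One must verify that the segment near the origin, where neither Bessel asymptotic applies, is exponentially subdominant (this is the $\mathfrak I_1$-type step, already carried out for $\Psi_1$), and that the $\mo((\cdot)^{-1})$ Bessel remainders, after being run through Laplace's method, remain strictly below the leading term. The more delicate point is that, once $x$ and $y$ leave the positive real axis, the contour of integration must be deformable through the saddle $u=(\sqrt x+\sqrt y)^2$ of $-u+2\sqrt u(\sqrt x+\sqrt y)$---this is precisely why the hypotheses confine $x,y$ to the sectors $S_\delta$. Keeping the several factors of $\sqrt\pi$ and the fractional exponents of $x,y,t,\lambda$ straight in the final bookkeeping is the other place where care is required.
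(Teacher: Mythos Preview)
Your approach is essentially the paper's: the same Laplace integral for $\Psi_2$, the same conversion of the two ${}_0F_1$ factors to modified Bessel functions, a split of the integration range, and Olver's Laplace method at the interior saddle. Two differences in execution are worth noting.

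First, the paper sets $s=w^2$ and works with $P_t(x,y)=\Psi_2[a;c,c';t^2x^2,t^2y^2]$, so the Bessel arguments become linear in the new variable and the phase collapses to a pure Gaussian $-\tau^2(u-1)^2$ (with $\tau=t(x+y)$); this is equivalent to your $\phi(w)=-w+2\sqrt{w}$ after the further change $w=u^2$, so the difference is cosmetic.

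Second, the paper splits at a \emph{shrinking} point ($u=t^{-2}$ in part~(i), and at $|\lambda|^{-2}$, $|\lambda|^{-1}$ in part~(ii)), chosen so that on the innermost piece the Bessel arguments stay bounded and the series bound from \eqref{I_nu - Def} applies directly---this is what makes the $\mathfrak I_1$-style estimate go through verbatim. With your \emph{fixed} split at $w_1\in(0,\tfrac14)$, the Bessel arguments on $(0,w_1)$ still tend to infinity with $t$, so ``exactly as $\mathfrak I_1$'' does not literally apply. The piece is nevertheless negligible: a crude global bound of the type $|I_\nu(z)|\ll (|z|^{\Re\nu}+|z|^{-1/2})\me^{\Re z}$ for $\Re z\geqslant 0$ gives the exponent $\lambda\phi(w)\leqslant\lambda\phi(w_1)<\lambda$ on $(0,w_1]$, hence $o(\me^{\lambda})$. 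So this is a fixable imprecision in the justification, not a gap in the strategy.
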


\begin{proof}
	Let us start with the integral representation \cite[Eq. (2.4c)]{WaHe}
	\[\Psi_2[a;c,c';x,y]=\frac{1}{\Gamma(a)}\int_0^{\infty}s^{a-1}\me^{-s}\,
	{}_0F_1\left[\begin{matrix}
		-\\
		c
	\end{matrix};xs\right]
	{}_0F_1\left[\begin{matrix}
		-\\
		c'
	\end{matrix};ys\right]\md s.
	\]
	Using \eqref{0F1 identity} and setting $s=w^2$, we obtain for $P_t(x,y):=\Psi_2[a;c,c';t^2 x^2,t^2 y^2]$ that
	\begin{equation}\label{Psi_2 another integral}
		P_t(x,y)=\frac{2\,\Gamma(c)\Gamma(c')}{\Gamma(a)}x^{1-c}y^{1-c'}t^{2-c-c'}\int_0^{\infty}w^{2a-c-c'+1}
		\me^{-w^2}I_{c-1}\left(2xtw\right)I_{c'-1}\left(2ytw\right)\md w,
	\end{equation}
	where we assume for the moment that $t,x,y$ are positive.
	
	(i) Denote $\alpha=2x(x+y),\,\beta=2y(x+y)$ and $\tau=t(x+y)$. Set $w=\tau u$ in \eqref{Psi_2 another integral} and then
	\begin{equation}\label{P_t(x,y) integral}
		P_t(x,y)=\frac{2\,\Gamma(c)\Gamma(c')}{\Gamma(a)}x^{1-c}y^{1-c'}t^{2-c-c'}\tau^{2a-c-c'+2}
		\int_0^{\infty}u^{2a-c-c'+1}\me^{-\tau^2 u^2}I_{c-1}\left(\alpha t^2 u\right)I_{c'-1}\left(\beta t^2 u\right)\md u,
	\end{equation}
	which, by analytic continuation, is valid for $t>0$ and $x^2,y^2\in S_{\delta}$.
	
	Denote by $\mathfrak{I}$ the integral on the right side of \eqref{P_t(x,y) integral}. 
	Following the proof of Theorem \ref{Thm: Psi_1[x,y/x] for small x}, we have
	\begin{align*}
		\mathfrak{I} & =\left(\int_0^{t^{-2}}+\int_{t^{-2}}^{\infty}\right)u^{2a-c-c'+1}\me^{-\tau^2 u^2}I_{c-1}\left(\alpha t^2 u\right)I_{c'-1}\left(\beta t^2 u\right)\md u\\
		& \sim \int_{t^{-2}}^{\infty}u^{2a-c-c'+1}\me^{-\tau^2 u^2}I_{c-1}\left(\alpha t^2 u\right)I_{c'-1}\left(\beta t^2 u\right)\md u\\
		& \sim \frac{t^{-2}\me^{\tau^2}}{2\pi\sqrt{\alpha\beta}}\int_{t^{-2}}^{\infty}u^{2a-c-c'+1}
		\me^{-\tau^2\left(u-1\right)^2}\md u\sim \frac{t^{-2}\tau^{-1}\me^{\tau^2}}{2\sqrt{\pi\alpha\beta}},
	\end{align*}
	where in the last line we used \eqref{I_nu for large z} and Olver's Laplace method \cite[Theorem I]{Olve}. Hence, as $t\to+\infty$, for $x,y\in S_{\delta}$ with $0<K_1\leqslant |x|,|y|\leqslant K_2<\infty$,
	\begin{align*}
		P_t(x,y)& \sim \frac{\Gamma(c)\Gamma(c')}{\sqrt{\pi\alpha\beta}\,\Gamma(a)}x^{1-c}y^{1-c'}t^{-c-c'}\tau^{2a-c-c'+1}\me^{\tau^2}\\
		& =\frac{\Gamma(c)\Gamma(c')}{2\sqrt{\pi}\,\Gamma(a)}
		x^{\frac{1}{2}-c}y^{\frac{1}{2}-c'}\left(x+y\right)^{2a-c-c'}t^{2(a-c-c')+1}\me^{t^2(x+y)^2},
	\end{align*}
	which is consistent with the desired result \eqref{Psi_2[tx,ty] for large t}.
	
	(ii) Denote $\lambda=x+y$. Taking $t=1$ and setting $w=\lambda v$ in \eqref{Psi_2 another integral}, we have
	\begin{equation}\label{Psi_2 another integral - 1}
		P_1(x,y)=\frac{2\,\Gamma(c)\Gamma(c')}{\Gamma(a)}x^{1-c}y^{1-c'}\lambda^{2a-c-c'+2}
		\int_0^{\infty}v^{2a-c-c'+1}\me^{-\lambda^2 v^2}I_{c-1}\left(2x\lambda v\right)I_{c'-1}\left(2y\lambda v\right)\md v,
	\end{equation}
	which, by analytic continuation, is valid for fixed $x\in\CC\sm\{0\}$ and large $y$ with $y^2\in S_{\delta}$. 
	
	Denote by $\mathfrak{T}$ the integral on the right side of \eqref{Psi_2 another integral - 1}. Similar to the analysis in (i), we obtain
	\begin{align*}
		\mathfrak{T}& =\left(\int_0^{\left|\lambda\right|^{-2}}
		+\int_{\left|\lambda\right|^{-2}}^{\left|\lambda\right|^{-1}}+\int_{\left|\lambda\right|^{-1}}^{\infty}\right) 
		v^{2a-c-c'+1}\me^{-\lambda^2 v^2}I_{c-1}\left(2x\lambda v\right)I_{c'-1}\left(2y\lambda v\right)\md v\\
		& \sim \int_{\left|\lambda\right|^{-1}}^{\infty}v^{2a-c-c'+1}\me^{-\lambda^2 v^2}I_{c-1}\left(2x\lambda v\right)I_{c'-1}\left(2y\lambda v\right)\md v\\
		& \sim \frac{1}{4\pi}x^{-\frac{1}{2}}y^{-\frac{1}{2}}\lambda^{-1}
		\me^{\lambda^2}\int_{\left|\lambda\right|^{-1}}^{\infty}v^{2a-c-c'}\me^{-\lambda^2\left(v-1\right)^2}\md v
		\sim \frac{1}{4\sqrt{\pi}}x^{-\frac{1}{2}}y^{-\frac{1}{2}}\lambda^{-2}\me^{\lambda^2}.
	\end{align*}
	Hence, for fixed $x\in\CC\sm\{0\}$, as $y\to\infty$ with $y\in S_{\delta}$,
	\[P_1(x,y)\sim \frac{\Gamma(c)\Gamma(c')}{2\sqrt{\pi}\,\Gamma(a)}x^{\frac{1}{2}-c}y^{\frac{1}{2}-c'}\lambda^{2a-c-c'}\me^{\lambda^2}
	=\frac{\Gamma(c)\Gamma(c')}{2\sqrt{\pi}\,\Gamma(a)}x^{\frac{1}{2}-c}y^{\frac{1}{2}-c'}\left(x+y\right)^{2a-c-c'}\me^{\left(x+y\right)^2},\]
	which yields the leading-term behavior shown in \eqref{Psi_2[x,y] for large y in right half-plane}.
\end{proof}

Before proceeding to the next result, we first state a useful lemma, which can be easily 
obtained by repeating  the proof of \cite[Lemma 2.3]{HaLu2}.

\begin{lemma}
	For $z$ bounded away from the points $-b+1,\cdots,-b+n$,
	\begin{equation*}%\label{Pohhammer symbol--lower bound}
		\bigl|(z+b-1)\cdots(z+b-n)\bigr|^{-1}\ll \lambda_n^{-1},
	\end{equation*}
	where
	\begin{align}\label{lambda_n definition}
		\lambda_n&:=\begin{cases}
			1,& 0\leqslant n\leqslant 2,\\
			\min\limits_{1\leqslant k\leqslant n-1}(k-1)!(n-k-1)!\,,& n\geqslant 3
		\end{cases}\notag\\
		&=\begin{cases}
			1,& 0\leqslant n\leqslant 2,\\
			(m-1)!\,m!\,,& n=2m+1\geqslant 3,\\
			\left.(m-1)!\right.^2,& n=2m\geqslant 3.
		\end{cases}
	\end{align}
\end{lemma}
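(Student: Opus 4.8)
\emph{Proof proposal.} Since $\lambda_{n}^{-1}$ is the target bound for the reciprocal, the plan is to establish the matching lower bound $\prod_{j=1}^{n}\bigl|z+b-j\bigr|\geqslant K^{-1}\lambda_{n}$, with $K>0$ depending only on $b$ and on the distance $d>0$ by which $z$ is kept away from $-b+1,\dots,-b+n$, but independent of $n$ and of $z$. Write $z+b=\rho+\mi h$ with $\rho,h\in\RR$, so that $|z+b-j|^{2}=(\rho-j)^{2}+h^{2}\geqslant(\rho-j)^{2}$, and recall that the hypothesis gives $|z+b-j|\geqslant d$ for $j=1,\dots,n$. (After the substitution $z\mapsto z+b-n$ the product becomes a Pochhammer symbol $(z+b-n)_{n}$ and the claim reduces to \cite[Lemma 2.3]{HaLu2}; the argument below is the one alluded to there.)

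Let $j_{0}$ be the integer nearest to $\rho$. If $j_{0}\in\{1,\dots,n\}$, I would peel off the $j_{0}$-th factor using $|z+b-j_{0}|\geqslant d$, bound each remaining factor below by $|\rho-j|\geqslant|j_{0}-j|-\tfrac12$, and telescope the resulting half-integer product:
\[
\prod_{j=1}^{n}\bigl|z+b-j\bigr|\;\geqslant\;d\!\!\prod_{\substack{1\leqslant j\leqslant n\\ j\neq j_{0}}}\!\!\Bigl(|j_{0}-j|-\tfrac12\Bigr)\;=\;\frac{d}{\pi}\,\Gamma\!\Bigl(j_{0}-\tfrac12\Bigr)\Gamma\!\Bigl(n-j_{0}+\tfrac12\Bigr).
\]
Setting $u=j_{0}-\tfrac12\in(0,n)$, convexity of $\log\Gamma$ shows that $\Gamma(u)\Gamma(n-u)$ is minimised at $u=n/2$, hence is $\geqslant\Gamma(n/2)^{2}$; on the other hand the closed form of $\lambda_{n}$ together with a Gautschi-type ratio estimate gives $\lambda_{n}\leqslant\Gamma(\lfloor n/2\rfloor)\Gamma(\lceil n/2\rceil)\leqslant 2\,\Gamma(n/2)^{2}$, so $\prod_{j=1}^{n}|z+b-j|\geqslant(d/2\pi)\lambda_{n}$ in this case. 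If $j_{0}\notin\{1,\dots,n\}$, then $\rho\leqslant\tfrac12$ or $\rho\geqslant n+\tfrac12$; by the symmetry $j\leftrightarrow n+1-j$ I may assume $\rho\leqslant\tfrac12$, so that $|z+b-j|\geqslant j-\tfrac12$ for every $j$ and $\prod_{j=1}^{n}|z+b-j|\geqslant\Gamma(n+\tfrac12)/\sqrt{\pi}\geqslant\lambda_{n}/\sqrt{\pi}$. The remaining cases $n\leqslant2$ are trivial since then $\lambda_{n}=1$ while $|z+b-j|\geqslant d$. Taking $K$ to be the largest of the finitely many constants produced completes the proof.

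The one point demanding care is the \emph{uniformity of $K$ in $n$}: the leftover product $\prod_{j\neq j_{0}}(|j_{0}-j|-\tfrac12)$ must be comparable to $\lambda_{n}$ up to a fixed constant, not merely up to a factor growing with $n$. This is exactly what the comparison step above delivers, by recognising that both that product and $\lambda_{n}$ agree, up to bounded factors, with $\Gamma(n/2)^{2}$ --- the former because $\Gamma(u)\Gamma(n-u)$ achieves its minimum at the midpoint, the latter because $((n/2-1)!)^{2}$ (respectively $((n-3)/2)!\,((n-1)/2)!$) differs from $\Gamma(n/2)^{2}$ only by a bounded factor. Everything else --- the reflection symmetry, the case split according to the position of $\rho$, and the collection of constants --- is routine bookkeeping, which is why the lemma follows by mimicking the proof of \cite[Lemma 2.3]{HaLu2}.
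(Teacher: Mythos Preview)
Your argument is correct. The paper itself gives no proof of this lemma: it merely states that the result ``can be easily obtained by repeating the proof of \cite[Lemma 2.3]{HaLu2}.'' You have recognised this (noting that the substitution $z\mapsto z+b-n$ turns the product into the Pochhammer symbol $(z+b-n)_n$ and reduces the claim to that reference) and then written out a clean version of the underlying argument --- splitting on the nearest integer $j_0$ to $\Re(z+b)$, peeling off the nearest factor with the distance hypothesis, and comparing the residual half-integer product $\frac{d}{\pi}\Gamma(j_0-\tfrac12)\Gamma(n-j_0+\tfrac12)$ to $\lambda_n$ via the convexity of $\log\Gamma$ and a Gautschi-type bound. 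The uniformity in $n$, which you correctly flag as the crucial point, is handled by showing that both the minimum of $\Gamma(u)\Gamma(n-u)$ and $\lambda_n$ agree with $\Gamma(n/2)^2$ up to an absolute constant. There is nothing further to compare, since the paper's own ``proof'' is just the citation you already invoke.
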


\begin{theorem}\label{Thm: 2F2 expansion}
	Assume that $a,b\in\CC,\,c,d\in\CC\sm\ZZ_{\geqslant 0}$ and $a-b\in\CC\sm\ZZ$. Let $w>0$ be a number such that
	\[w>\max\bigl\{\Re(a),\Re(b),\Re(d)\bigr\}\]
	and that the fractional parts of $w-\Re(a)$ and $w-\Re(b)$ are both in the interval $(\ve,1)$, 
	where $\ve>0$ is a small number. Then for any $n\in\ZZ_{\geqslant 0}$,
	\begin{equation}\label{2F2 expansion for -n}
		{}_2F_2\left[\begin{matrix}
			a,b-n\\
			c,\ \ \,d\ \ \,
		\end{matrix};-z\right]=\frac{\Gamma(c)\Gamma(d)}{\Gamma(a)\Gamma(b-n)}\left\{S_n(z)+T_n(z)+R_{n,w}(z)\right\}
	\end{equation}
	as $z\to\infty$ such that $|\arg(z)|<\pi$ and $z$ is bounded away from the points $-b+k\,(k\in\ZZ)$, where
	\begin{subequations}
		\begin{align}
			S_n(z)&=\sum_{k=0}^{\lfloor w-\Re(a)\rfloor}\frac{\Gamma(a+k)\Gamma(b-a-n-k)}{\Gamma(c-a-k)\Gamma(d-a-k)}\frac{\left(-1\right)^k}{k!}z^{-a-k},\\
			T_n(z)& =\sum_{k=0}^{\lfloor w-\Re(b)\rfloor+n}\frac{\Gamma(b-n+k)\Gamma(a-b+n-k)}{\Gamma(c-b+n-k)\Gamma(d-b+n-k)}\frac{\left(-1\right)^k}{k!}z^{n-b-k},
		\end{align}
		and
		\begin{align}\label{R_{n,w} estimate}
			R_{n,w}(z)&=\mo\left(\lambda_n^{-1}\left\{\left|z\right|^{-w}+\left|z\right|^{\Re(a+b-c-d)}\me^{-\Re(z)}\right\}\right)
		\end{align}
	\end{subequations}
	with $\lambda_n$ defined in \eqref{lambda_n definition}.
\end{theorem}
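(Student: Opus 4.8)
The plan is to derive the expansion from a Mellin--Barnes representation of the $_2F_2$, slide the contour of integration to the left, read off $S_n$ and $T_n$ as residue sums, and estimate what is left. First I would write
\[
{}_2F_2\left[\begin{matrix} a,\ b-n\\ c,\ \ d\ \ \end{matrix};-z\right]
=\frac{\Gamma(c)\Gamma(d)}{\Gamma(a)\Gamma(b-n)}\cdot\frac{1}{2\pi\mi}\int_{\mathfrak{L}}\frac{\Gamma(a+s)\,\Gamma(b-n+s)\,\Gamma(-s)}{\Gamma(c+s)\,\Gamma(d+s)}\,z^{s}\,\md s,
\]
where $\mathfrak{L}$ separates the poles of $\Gamma(a+s)\Gamma(b-n+s)$ from those of $\Gamma(-s)$; because $a-b\notin\ZZ$, all these poles are simple and split into the two disjoint progressions $s=-a-k$ and $s=n-b-k$ $(k\in\ZZ_{\geqslant0})$. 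Exactly as in \eqref{estimate for Psi(s)--1}--\eqref{estimate for Psi(s)--2}, Stirling's formula shows that the integrand decays exponentially along $\mathfrak{L}$ when $|\arg z|<\tfrac{\pi}{2}$; for $\tfrac{\pi}{2}\leqslant|\arg z|<\pi$ I would keep the representation valid by bending the two tails of $\mathfrak{L}$ into the left half-plane, the standard remedy for a generalized hypergeometric function with as many numerator as denominator parameters. (Equivalently one could start from the parameter-shift identity ${}_2F_2[a,b-n;c,d;-z]=(b-1)(b-2)\cdots(b-n)\,\bigl[(\vartheta+b-1)\cdots(\vartheta+b-n)\bigr]^{-1}{}_2F_2[a,b;c,d;-z]$, with $\vartheta=z\,\md/\md z$, reducing everything to the classical $(n=0)$ asymptotics of ${}_2F_2$ together with control of the operator $\bigl[(\vartheta+b-1)\cdots(\vartheta+b-n)\bigr]^{-1}$.)

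Next I would push $\mathfrak{L}$ onto the vertical line $\Re(s)=-w$. The hypotheses that the fractional parts of $w-\Re(a)$ and of $w-\Re(b)$ lie in $(\ve,1)$ are precisely what keep this line at a fixed positive distance from every pole of the integrand, so the displacement is legitimate. The poles crossed are $s=-a-k$ for $0\leqslant k\leqslant\lfloor w-\Re(a)\rfloor$ and $s=n-b-k$ for $0\leqslant k\leqslant\lfloor w-\Re(b)\rfloor+n$; using $\Res_{s=-m}\Gamma(s)=(-1)^{m}/m!$ together with a direct evaluation of the residues (for instance $\Gamma(b-n-a-k)=\Gamma(b-a-n-k)$), their contributions come out to be exactly $\tfrac{\Gamma(c)\Gamma(d)}{\Gamma(a)\Gamma(b-n)}S_n(z)$ and $\tfrac{\Gamma(c)\Gamma(d)}{\Gamma(a)\Gamma(b-n)}T_n(z)$. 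What then remains of the integral, taken along $\Re(s)=-w$, is a constant multiple of $R_{n,w}(z)$.

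The hard part will be the estimate \eqref{R_{n,w} estimate} for that leftover integral, in which two effects must be extracted simultaneously. For the dependence on $n$, I would factor $\Gamma(b-n+s)=\Gamma(b+s)\big/\prod_{j=1}^{n}(b+s-j)$ and invoke the lemma just stated --- applicable because the shifted contour stays at positive distance from the points $s=-b+j$ --- to obtain $\big|\prod_{j=1}^{n}(b+s-j)\big|^{-1}\ll\lambda_n^{-1}$ uniformly on $\Re(s)=-w$; this is the origin of the factor $\lambda_n^{-1}$ (with $\lambda_n$ as in \eqref{lambda_n definition}) in \eqref{R_{n,w} estimate}, and the corresponding hypothesis that $z$ avoid the points $-b+k$ is the form of this same requirement that is needed if one instead proceeds via the shift identity above. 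For the dependence on $z$, on that line $|z^{s}|=|z|^{-w}\me^{-\Im(s)\arg z}$, and combining this with the Stirling bound on the remaining ratio of Gamma functions (three factors in the numerator against two in the denominator, hence a net decay $\me^{-\pi|\Im s|/2}$) gives the algebraic term $|z|^{-w}$ whenever $|\arg z|<\tfrac{\pi}{2}$; when $\tfrac{\pi}{2}\leqslant|\arg z|<\pi$, bending the contour forces an excursion across the imaginary axis whose contribution is the exponentially small term $|z|^{\Re(a+b-c-d)}\me^{-\Re(z)}$, the usual Stokes contribution for functions of this type. Carrying out this deformation and bounding all the resulting pieces uniformly in $n$ and in $\arg z$ over the whole range $(-\pi,\pi)$ is, I expect, the only delicate step; the residue bookkeeping that produces $S_n$ and $T_n$, and the power counting, reduce entirely to Stirling's formula.
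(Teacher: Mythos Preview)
Your approach is essentially the paper's: Mellin--Barnes representation, contour shift to collect the residue sums $S_n$ and $T_n$, and the factorisation $\Gamma(b-n+s)=\Gamma(b+s)\big/\prod_{j=1}^{n}(s+b-j)$ together with the stated lemma to extract the $\lambda_n^{-1}$. The only difference in execution is that the paper works from the outset with a Lin--Wong loop contour $\mathcal{C}=L^{-}\cup L^{v}\cup L^{+}$ whose tails $L^{\pm}$ run to $+\infty$ in directions $\pm\theta_0\in(0,\tfrac{\pi}{2})$, so that the exponential contribution $|z|^{\Re(a+b-c-d)}\me^{-\Re(z)}$ is read off directly from the integral on $L^{\pm}$ (via the estimates in \cite{LiWo} and \cite{HaLu2}) for all $|\arg z|<\pi$, rather than arising through a separate bending step only when $|\arg z|\geqslant\tfrac{\pi}{2}$.
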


\begin{proof}
	The proof is much akin to that of \cite[Theorem 2.4]{HaLu2}, so we show only the outline of the proof.
	
	First of all, we use Cauchy's residue theorem to obtain expansion \eqref{2F2 expansion for -n}, 
	where the remainder term $R_{n,w}(z)$ is given explicitly by
	\[
	R_{n,w}(z)=\frac{1}{2\pi\mi}\int_{\mathcal{C}}h_n(s)z^s\md s,~~~
	h_n(s):=\frac{\Gamma(a+s)\Gamma(b-n+s)}{\Gamma(c+s)\Gamma(d+s)}\Gamma(-s).
	\]
	Here $\mathcal{C}$ is a negative-oriented loop that consists of the vertical line
	\[L^v:\quad s=-w+\mi t,\quad |t|\leqslant T\]
	and the contours $L^{\pm}$ which pass to infinity in the directions $\pm \theta_0\,(0<\theta_0<\frac{\pi}{2})$. 
	Furthermore, $\mathcal{C}$ is taken to embrace all the poles of $h_n(s)$ in the right of $L^v$, and $L^{\pm}$ 
	are taken to be bounded away from the poles of $h_n(s)/\Gamma(-s)$ in the right of $L^v$.
	
	Then, divide $R_{n,w}(z)$ into two parts:
	\[R_{n,w}(z)=\frac{1}{2\pi\mi}\int_{L^v}h_n(s)z^s\md s+\frac{1}{2\pi\mi}\int_{L^{\pm}}h_n(s)z^s\md s=:R_v(z)+R_{\pm}(z).\]
	Note that for $s\in\mathcal{C}$,
	\[\left|h_n(s)\right|=\left|h_0(s)\right|\cdot\bigl|(s+b-1)\cdots(s+b-n)\bigr|^{-1}\ll \left|h_0(s)\right|\lambda_n^{-1}.\]
	It follows from \cite[Eq. (2.4) and (2.6)]{LiWo} that
	\[R_v(z)\ll \lambda_n^{-1}\int_{-w-\mi\infty}^{-w+\mi\infty}\left|h_0(s)z^s\right|\left|\md s\right|\ll \lambda_n^{-1}\left|z\right|^{-w}.\]
	According to \cite[pp. 5--6]{HaLu2}, we also have
	\[R_{\pm}(z)\ll\lambda_n^{-1}\int_{L^{\pm}}\left|h_0(s)z^s\right|\left|\md s\right|\ll \lambda_n^{-1}\left|z\right|^{\Re(a+b-c-d)}\me^{-\Re(z)}.\]
	
	Finally, combining the estimates on $R_v(z)$ and $R_{\pm}(z)$ we obtain the remainder estimate \eqref{R_{n,w} estimate}.
\end{proof}

As a direct consequence of Theorem \ref{Thm: 2F2 expansion}, 
we have the following corollary. 

\begin{corollary}\label{Coro: 1F1 uniform estimate}
	Assume that $b\in\CC$ and $d\in\CC\sm\ZZ_{\leqslant 0}$. Let $N$ be an integer such that $N>\max\{|b|,|b-d|\}$. Then for any $n\in\ZZ_{\geqslant 0}$,
	\begin{equation}
		{}_1F_1\left[\begin{matrix}
			b-n\\
			\ \ \,d\ \ \,
		\end{matrix};-z\right]=\frac{\Gamma(d)}{\Gamma(b-n)}\sum_{k=0}^{n+N}\frac{\Gamma(b-n+k)}{\Gamma(d-b+n-k)}\frac{\left(-1\right)^k}{k!}z^{n-b-k}+R(n;z)
	\end{equation}
	as $z\to\infty$ such that $\left|\arg(z)\right|<\pi$ and $z$ is bounded away from the points $-b+k\,(k\in\ZZ)$, where
	\[R(n;z)\ll \left(n+1\right)^{-\Re(b)}n!\,\lambda_n^{-1}\left\{\left|z\right|^{-\Re(b)-N-\frac{1}{2}}+\left|z\right|^{\Re(b-d)}\me^{-\Re(z)}\right\}.\]
\end{corollary}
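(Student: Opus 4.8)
The plan is to derive this directly from Theorem \ref{Thm: 2F2 expansion} via the confluence $c\to a$, which collapses the ${}_2F_2$ to a ${}_1F_1$. Concretely, I would fix $a$ to be any non-integer with $a-b\notin\ZZ$ and $\Re(a)$ small; for definiteness $a=b-1+\mi$ works, since then $a\notin\ZZ_{\geqslant 0}$, $a-b=-1+\mi\notin\ZZ$ and $\Re(a)=\Re(b)-1$. With $c=a$ one has ${}_2F_2[a,b-n;a,d;-z]={}_1F_1[b-n;d;-z]$, and in the expansion \eqref{2F2 expansion for -n} the sum $S_n(z)$ vanishes identically, because each of its terms carries the factor $1/\Gamma(c-a-k)=1/\Gamma(-k)=0$ for $k\in\ZZ_{\geqslant 0}$; in $T_n(z)$ the numerator factor $\Gamma(a-b+n-k)$ cancels the denominator factor $\Gamma(c-b+n-k)=\Gamma(a-b+n-k)$, so
\[
T_n(z)=\sum_{k=0}^{\lfloor w-\Re(b)\rfloor+n}\frac{\Gamma(b-n+k)}{\Gamma(d-b+n-k)}\frac{(-1)^k}{k!}z^{n-b-k},
\]
while the prefactor $\Gamma(c)\Gamma(d)/(\Gamma(a)\Gamma(b-n))$ reduces to $\Gamma(d)/\Gamma(b-n)$.

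Next I would choose the truncation parameter $w$ so that the upper limit $\lfloor w-\Re(b)\rfloor+n$ equals $n+N$. Taking $w=\Re(b)+N+\tfrac34$ achieves this: $\lfloor w-\Re(b)\rfloor=N$, and the fractional parts of $w-\Re(b)$ and of $w-\Re(a)=N+\tfrac74$ both equal $\tfrac34$, which lies in $(\ve,1)$ for any fixed $\ve<\tfrac12$. The requirement $w>\max\{\Re(a),\Re(b),\Re(d)\}$ holds because $w>\Re(b)>\Re(a)$ and, using the hypothesis $N>|b-d|\geqslant\Re(d-b)$, also $w=\Re(b)+N+\tfrac34>\Re(d)$. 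With these choices Theorem \ref{Thm: 2F2 expansion} yields precisely the asserted expansion, with remainder
\[
R(n;z)=\frac{\Gamma(d)}{\Gamma(b-n)}\,R_{n,w}(z).
\]

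It then remains to convert \eqref{R_{n,w} estimate} into the stated bound. With $c=a$ one has $\Re(a+b-c-d)=\Re(b-d)$, so $R_{n,w}(z)\ll\lambda_n^{-1}\{|z|^{-w}+|z|^{\Re(b-d)}\me^{-\Re(z)}\}$; and since $w>\Re(b)+N+\tfrac12$, for large $|z|$ we have $|z|^{-w}\leqslant|z|^{-\Re(b)-N-1/2}$. For the gamma prefactor I would write $1/\Gamma(b-n)=(b-n)_n/\Gamma(b)$, so that $1/|\Gamma(b-n)|=\prod_{j=1}^{n}|b-j|\,/\,|\Gamma(b)|=|\Gamma(n+1-b)|\,/\,|\Gamma(b)\Gamma(1-b)|$ (the degenerate cases $b\in\ZZ_{\geqslant 1}$, where the product eventually vanishes, being trivial); Stirling's formula then gives $|\Gamma(n+1-b)|\asymp(n+1)^{-\Re(b)}\,n!$, hence $1/|\Gamma(b-n)|\ll(n+1)^{-\Re(b)}n!$ uniformly in $n\in\ZZ_{\geqslant 0}$. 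Multiplying the three estimates together produces exactly the claimed bound on $R(n;z)$.

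None of these steps is deep; the only point I would be careful about is that every $\mo$-constant stays independent of $n$. This causes no trouble: $w$ is chosen independently of $n$, so the constant implicit in \eqref{R_{n,w} estimate} does not depend on $n$, and the Stirling estimate for $1/|\Gamma(b-n)|$, together with boundedness over the finitely many small $n$, is uniform. Thus the corollary is genuinely a direct specialization of Theorem \ref{Thm: 2F2 expansion}.
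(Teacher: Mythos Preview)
Your proof is correct and follows essentially the same route as the paper's: specialize Theorem~\ref{Thm: 2F2 expansion} with $a=c$ so that ${}_2F_2$ collapses to ${}_1F_1$ and $S_n$ vanishes, choose $w$ appropriately, and estimate $1/\Gamma(b-n)$. The only differences are cosmetic --- the paper takes $a=c=\tfrac12$, $w=\Re(b)+N+\tfrac12$, and cites \cite[Lemma~2.1]{HaLu1} for $(1-b)_n/n!\ll(n+1)^{-\Re(b)}$ instead of invoking Stirling directly --- and your choices $a=c=b-1+\mi$, $w=\Re(b)+N+\tfrac34$ are arguably cleaner since they avoid the implicit restrictions (e.g.\ $b\notin\tfrac12+\ZZ$) that the paper's choice quietly imposes.
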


\begin{proof}
	It suffices to verify the error term here. Take $a=c=\frac{1}{2}$ and $w-\Re(b)=N+\frac{1}{2}$ in \eqref{2F2 expansion for -n}. Then
	\[R(n;z)=\frac{\Gamma(d)}{\Gamma(b-n)}R_{n,w}(z)=\left(-1\right)^n n!\,\frac{\Gamma(d)}{\Gamma(b)}\frac{\left(1-b\right)_n}{n!}R_{n,w}(z).\]
	The estimate on $R(n;z)$ follows from \eqref{R_{n,w} estimate} and \cite[Lemma 2.1]{HaLu1}.
\end{proof}

We are in a position to derive the asymptotics of $\Psi_2$ under the condition \eqref{initial asymptotic condition}.

\begin{theorem}\label{Thm: Psi_2[x,beta/x] for large x}
	Assume that $a\in\CC$ and $c,c'\in\CC\sm\ZZ_{\leqslant 0}$. Set $xy=\beta$. Then under the condition that
	\[x\to\infty,\,\left|\arg(x)\right|<\pi;\quad x\text{ is bounded away from }a-c+k\,(k\in\ZZ);
	\quad 0<\beta_1\leqslant \left|\beta\right|\leqslant\beta_2<\infty,\]
	the function $\Psi_2\equiv \Psi_2[a;c,c';x,y]$ admits the asymptotic expansion
	\begin{align}\label{Thm: Psi_2[x,beta/x] for large x - 1}
		\Psi_2={}& \frac{\Gamma(c)}{\Gamma(c-a)}x^{-a}\sum_{m=0}^{N-1}b_1(m)x^{-m}+
		\frac{\Gamma(c)}{\Gamma(a)}x^{a-c}\me^x\left(\sum_{m=0}^{N-1}b_2(m)x^{-m}+\sum_{m=1}^{N-1}b_3(m)x^{-m}\right)\notag\\
		& +\mo\left(\left|x\right|^{-\Re(a)-N}+\left|x\right|^{\Re(a-c)-N-\frac{1}{2}}\me^{\Re(x)}\right),
	\end{align}
	where $N$ is any positive integer such that $N>\max\{1,|a|,|a-c|\}$, and
	\begin{subequations}
		\begin{align}
			b_1(m)& =\sum_{\substack{n,k\geqslant 0\\2n+k=m}}
			\frac{\left(a\right)_{n+k}\left(a-c+1\right)_{n+k}}{\left(c'\right)_n n!\,k!}\left(-\beta\right)^n,\\
			b_2(m)& ={}_3F_4\left[\begin{matrix}
				1,a+m,a-c+1+m\\
				a,a-c+1,1+m,c'+m
			\end{matrix};\beta\right]\frac{\left(a\right)_m\left(a-c+1\right)_m}{\left(c'\right)_m \left.m!\right.^2}\beta^m,\\
			b_3(m)& =\frac{1}{m!}\sum_{n=0}^{m-1}\frac{\left(1-a-n\right)_m\left(c-a-n\right)_m}{\left(c'\right)_n n!}\beta^n.
		\end{align}
	\end{subequations}
\end{theorem}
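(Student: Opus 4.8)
The plan is to start from the expansion \eqref{Psi_2 series expansion--1} with $y=\beta/x$,
\[
\Psi_2=\sum_{n=0}^{\infty}\frac{(a)_n}{(c')_n\,n!}\left(\frac{\beta}{x}\right)^{\!n}{}_1F_1\!\left[\begin{matrix}a+n\\c\end{matrix};x\right],
\]
and to feed into the inner ${}_1F_1[a+n;c;x]$ a large-$x$ expansion that is \emph{uniform} in the summation index $n$. The three families $b_1,b_2,b_3$ reflect the two asymptotic constituents of Kummer's function: the algebraic one ($\propto(-x)^{-a-n}$), whose contribution after summation over $n$ is the $x^{-a}$-series with coefficients $b_1(m)$, and the exponential one ($\propto e^{x}x^{a+n-c}$), whose contribution is the $e^{x}x^{a-c}$-series; the split of the latter into $b_2$ and $b_3$ will emerge from a finite/infinite decomposition of a sum over $n$, as explained below.

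Since the first parameter $a+n$ \emph{grows} with $n$, I would first apply Kummer's transformation ${}_1F_1[a+n;c;x]=e^{x}\,{}_1F_1[c-a-n;c;-x]$, turning the varying parameter into a \emph{decreasing} one, and then invoke Corollary \ref{Coro: 1F1 uniform estimate} with $b=c-a$, $d=c$, $z=x$ (this is exactly where the hypotheses ``$N>\max\{1,|a|,|a-c|\}$'' and ``$x$ bounded away from $a-c+k$'' originate). This represents ${}_1F_1[c-a-n;c;-x]$ as an explicit sum over $0\le k\le n+N$ in powers $x^{\,n-c+a-k}$ plus a remainder $R(n;x)$ with the quantitative bound of that corollary, and accounts for the $e^{x}x^{a-c}$-part of $\Psi_2$. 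For the exponentially-small-in-$x$ piece of ${}_1F_1[c-a-n;c;-x]$—which, multiplied by the prefactor $e^{x}$, is precisely what generates the $x^{-a}$-terms and is hidden inside $R(n;x)$—I would run a companion uniform estimate obtained from the same Mellin--Barnes argument that underlies Corollary \ref{Coro: 1F1 uniform estimate}, now retaining the residues discarded there; the representation \eqref{Psi_2 Mellin-Barnes integral} of $\Psi_2$ can be used to organise this part.

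Next I would substitute, interchange $\sum_{n}$ with the finite sums and the contour integrals, and collect powers of $x$. The bookkeeping uses $(a)_{n}(a+n)_{k}=(a)_{n+k}$, $(a-c+1)_{n}(a-c+1+n)_{k}=(a-c+1)_{n+k}$, the reflection identity $1/\Gamma(c-a-n)=(-1)^{n}(a-c+1)_{n}/\Gamma(c-a)$ and the companion $(c-a-n)_{k}=(-1)^{k}(a-c+1+n-k)_{k}$. After the powers $x^{\pm n}$ cancel, the coefficient of $x^{-m}$ in the $e^{x}x^{a-c}$-part turns out to be $\frac{1}{m!}\sum_{n\ge0}\frac{(a+n-m)_{m}\,(a-c+1+n-m)_{m}}{(c')_{n}\,n!}\beta^{n}$; splitting $\sum_{n\ge0}=\sum_{n=0}^{m-1}+\sum_{n\ge m}$, the finite head is exactly $b_3(m)$, while the tail, after the substitution $n=m+j$ together with the identities above, re-sums to the displayed ${}_3F_4$ and hence to $b_2(m)$. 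The analogous collection in the algebraic part (where the relevant power is $x^{-a-2n-k}$) sorts terms by $2n+k=m$ and produces $b_1(m)$.

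The hard part will be the uniform error control that legitimises these interchanges and pins down the order of the remainder. The remainders from Corollary \ref{Coro: 1F1 uniform estimate} and from the companion estimate carry the factor $(n+1)^{-\Re(c-a)}\,n!\,\lambda_n^{-1}$, and since $\lambda_n$ grows like $\bigl(\lfloor n/2\rfloor!\bigr)^{2}$ one has $n!\,\lambda_n^{-1}\ll n^{2}\,2^{n}$ by Stirling; weighting by the series coefficient $|(a)_{n}|\,|\beta/x|^{n}/(|(c')_{n}|\,n!)$ then cancels the $n!$ and leaves a doubly-indexed remainder dominated by $\sum_{n}\mathrm{poly}(n)\,(2|\beta|/|x|)^{n}/n!$, which is $\mathcal{O}(1)$ for large $|x|$ because $|\beta|$ stays bounded. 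Consequently the whole remainder has the order of a single $R(n;x)$ (resp. its companion), i.e. $\mathcal{O}\!\bigl(|x|^{-\Re(a)-N}+|x|^{\Re(a-c)-N-\tfrac12}e^{\Re(x)}\bigr)$; checking that the tails of the inner expansions beyond $x^{-(N-1)}$ really fit into this bound—this is what forces the $-\tfrac12$ shift present in Corollary \ref{Coro: 1F1 uniform estimate} together with the lower bound on $N$—is the delicate computation. As usual, the identity would first be established for $x$ in a half-plane where the Mellin--Barnes representations converge, and then extended to $|\arg(x)|<\pi$ by analytic continuation.
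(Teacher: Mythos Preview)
Your proposal is correct and follows essentially the same route as the paper: apply Kummer's transformation to reach \eqref{Psi_2 series expansion--2}, feed in the uniform ${}_1F_1$ estimate of Corollary~\ref{Coro: 1F1 uniform estimate}, and then rearrange by the series manipulation technique; your split $\sum_{n\ge 0}=\sum_{n=0}^{m-1}+\sum_{n\ge m}$ is exactly the paper's decomposition $F=F^{-}+F^{+}$ producing $b_2$ and $b_3$, and the $2n+k=m$ sorting is the paper's derivation of $b_1$ from the $g_n$-part.

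The one organisational difference is how the $x^{-a}$-series (the $b_1$ coefficients) is extracted. You correctly observe that Corollary~\ref{Coro: 1F1 uniform estimate} hides the exponentially small piece of ${}_1F_1[c-a-n;c;-x]$ inside its remainder $R(n;x)$, whose bound $|x|^{-\Re(a)}\me^{-\Re(x)}$ is not sharp enough once multiplied by $\me^{x}$; you propose to recover it via a ``companion uniform estimate''. The paper sidesteps this extra work by simply splitting the outer sum at $n=N$: for the finitely many $n\le N$ it uses the ordinary two-term asymptotic \eqref{1F1 asymptotics}, which displays both $f_n$ and $g_n$ explicitly, and reserves Corollary~\ref{Coro: 1F1 uniform estimate} for $n\ge N+1$, where the $g_n$-contribution is already $\mathcal{O}(|x|^{-\Re(a)-n})$ with $n\ge N+1$ and hence absorbed in the stated error. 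This finite/infinite split is shorter than developing a second uniform estimate, but your variant would also work.
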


\begin{proof}
	For $0\leqslant n\leqslant N$, we get from \eqref{1F1 asymptotics} that
	\begin{equation}\label{1F1 expansion--1}
		{}_1F_1\left[\begin{matrix}
			c-a-n\\
			c
		\end{matrix};-x\right]=f_n(x)+g_n(x)+\mo\left(\left|x\right|^{\Re(a-c)-N-\frac{1}{2}}+\left|x\right|^{-\Re(a)-n-N}\me^{-\Re(x)}\right),
	\end{equation}
	where
	\begin{align*}
		f_n(x)& =\frac{\Gamma(c)}{\Gamma(a+n)}\sum_{k=0}^{n+N}\frac{\left(c-a-n\right)_k\left(1-a-n\right)_k}{k!}x^{a-c+n-k},\\
		g_n(x)& =\frac{\Gamma(c)}{\Gamma(c-a-n)}\me^{-x}\sum_{k=0}^{N-1}\frac{\left(a+n\right)_k\left(a-c+n+1\right)_k}{k!}x^{-a-n-k}.
	\end{align*}
	For $n\geqslant N+1$, Corollary \ref{Coro: 1F1 uniform estimate} gives that
	\begin{equation}\label{1F1 expansion--2}
		{}_1F_1\left[\begin{matrix}
			c-a-n\\
			c
		\end{matrix};-x\right]=f_n(x)+\mo\left(n^{\Re(a-c)}n!\,\lambda_n^{-1}\left\{\left|x\right|^{\Re(a-c)-N-\frac{1}{2}}
		+\left|x\right|^{-\Re(a)}\me^{-\Re(x)}\right\}\right).
	\end{equation}
	Write $P(x,y)=\me^{-x}\Psi_2[a;c,c';x,y]$. It follows from \eqref{Psi_2 series expansion--2}, \eqref{1F1 expansion--1} and \eqref{1F1 expansion--2} that
	\begin{align*}
		P(x,y)
		&=\left(\sum_{n=0}^{N}+\sum_{n=N+1}^{\infty}\right)\frac{\left(a\right)_n}{\left(c'\right)_n}\,_1F_1\left[\begin{matrix}
			c-a-n\\
			c
		\end{matrix};-x\right]\frac{y^n}{n!}\\ &=\sum_{n=0}^{\infty}\frac{\left(a\right)_n}{\left(c'\right)_n}f_n(x)\frac{y^n}{n!}
		+\sum_{n=0}^{N}\frac{\left(a\right)_n}{\left(c'\right)_n}g_n(x)\frac{y^n}{n!}
		+R(x,y)\\
		& =:F(x,y)+G(x,y)+R(x,y).
	\end{align*}
	
	Next, we shall first deal with the remainder $R(x,y)$ and then study the functions $F(x,y)$ and $G(x,y)$ by the series manipulation technique.
	
	\begin{itemize}
		\item[(1)] \textit{Estimate of $R(x,y)$}. The error introduced by \eqref{1F1 expansion--1} is clearly
		\[R_1(x,y)=\mo\left(\left|x\right|^{\Re(a-c)-N-\frac{1}{2}}+\left|x\right|^{-\Re(a)-N}\me^{-\Re(x)}\right),\]
		whereas the error introduced by \eqref{1F1 expansion--2} is
		\[R_2(x,y)\ll \left(\left|x\right|^{\Re(a-c)-N-\frac{1}{2}}+\left|x\right|^{-\Re(a)}\me^{-\Re(x)}\right)
		\sum_{n\geqslant N+1}n^{\Re(a-c)}n!\,\lambda_n^{-1}\left|\frac{\left(a\right)_n}{\left(c'\right)_n}\right|\frac{\left|y\right|^n}{n!}.\]
		Using \cite[Lemma 2.1]{HaLu1} and recalling \eqref{lambda_n definition}, we have
		\begin{align*}
			R_2(x,y)& \ll \left(\left|x\right|^{\Re(a-c)-N-\frac{1}{2}}+\left|x\right|^{-\Re(a)}\me^{-\Re(x)}\right)
			\sum_{n\geqslant N+1}n^{\Re(2a-c-c')}\lambda_n^{-1}\beta_2^n\left|x\right|^{-n}\\
			& \ll \left|x\right|^{\Re(a-c)-2N-\frac{3}{2}}+\left|x\right|^{-\Re(a)-N-1}\me^{-\Re(x)}.
		\end{align*}
		Therefore, the remainder is
		\[R(x,y)=R_1(x,y)+R_2(x,y)=\mo\left(\left|x\right|^{\Re(a-c)-N-\frac{1}{2}}+\left|x\right|^{-\Re(a)-N}\me^{-\Re(x)}\right).\]
		\item[(2)] \textit{Analysis of $F(x,y)$ and $G(x,y)$.} Bear in mind that $xy=\beta$. We have
		\begin{align*}
			G(x,y)& =\frac{\Gamma(c)}{\Gamma(c-a)}x^{-a}\me^{-x}\sum_{n=0}^{N}\frac{\left(a\right)_n\left(a-c+1\right)_n}{\left(c'\right)_n n!}(-\beta)^n x^{-2n}
			\sum_{k=0}^{N-1}\frac{\left(a+n\right)_k\left(a-c+n+1\right)_k}{k!}x^{-k}\\
			& =\frac{\Gamma(c)}{\Gamma(c-a)}x^{-a}\me^{-x}\sum_{m=0}^{N-1}x^{-m}\sum_{\substack{n,k\geqslant 0\\2n+k=m}}\frac{\left(a\right)_n\left(a-c+1\right)_n\left(a+n\right)_k
				\left(a-c+n+1\right)_k}{\left(c'\right)_n n!\,k!}(-\beta)^n\\
			& \quad+\mo\left(\left|x\right|^{-\Re(a)-N}\me^{-\Re(x)}\right)\\
			& =\frac{\Gamma(c)}{\Gamma(c-a)}x^{-a}\me^{-x}\sum_{m=0}^{N-1}b_1(m)x^{-m}+\mo\left(\left|x\right|^{-\Re(a)-N}\me^{-\Re(x)}\right).
		\end{align*}
		For $F(x,y)$, we have
		\begin{align*}
			F(x,y)& =\frac{\Gamma(c)}{\Gamma(a)}x^{a-c}\sum_{n=0}^{\infty}\sum_{k=0}^{n+N}
			\frac{\left(c-a-n\right)_k\left(1-a-n\right)_k}{\left(c'\right)_n n!\,k!}\beta^n x^{-k}\\
			& =\frac{\Gamma(c)}{\Gamma(a)}x^{a-c}\left(\sum_{n=0}^{\infty}\sum_{k=0}^n+\sum_{n=0}^{\infty}\sum_{k=n+1}^{n+N}\right)
			\frac{\left(c-a-n\right)_k\left(1-a-n\right)_k}{\left(c'\right)_n n!\,k!}\beta^n x^{-k}\\
			& =:\frac{\Gamma(c)}{\Gamma(a)}x^{a-c}\bigl(F^-(x,y)+F^+(x,y)\bigr),
		\end{align*}
		where
		\begin{align*}
			F^-(x,y)& =\sum_{n=0}^{\infty}\sum_{k=0}^n \frac{\left(c-a-n\right)_k\left(1-a-n\right)_k}{\left(c'\right)_n n!\,k!}\beta^n x^{-k}\\
			& =\sum_{n=0}^{\infty}\sum_{k=0}^{\infty}\frac{\left(c-a-n-k\right)_k\left(1-a-n-k\right)_k}{\left(c'\right)_{n+k}(n+k)!\,k!}\beta^{n+k}x^{-k}\\
			& =\sum_{k=0}^{\infty}{}_3F_4\left[\begin{matrix}
				1,a+k,a-c+1+k\\
				a,a-c+1,1+k,c'+k
			\end{matrix};\beta\right]\frac{\left(a\right)_k\left(a-c+1\right)_k}{\left(c'\right)_k \left.k!\right.^2}\beta^k x^{-k}
		\end{align*}
		and
		\begin{align*}
			F^+(x,y)& =\sum_{n=0}^{\infty}\sum_{k=n+1}^{n+N}\frac{\left(c-a-n\right)_k\left(1-a-n\right)_k}{\left(c'\right)_n n!\,k!}\beta^n x^{-k}\\
			& =\sum_{n=0}^{N-1}\sum_{j=1}^{N-1}\frac{\left(c-a-n\right)_{n+j}\left(1-a-n\right)_{n+j}}{\left(c'\right)_n n!\,(n+j)!}\beta^n x^{-n-j}
			+\mo\left(\left|x\right|^{-N}\right)\\
			& =\sum_{m=1}^{N-1}\frac{x^{-m}}{m!}\sum_{n=0}^{m-1}\frac{\left(1-a-n\right)_m\left(c-a-n\right)_m}{\left(c'\right)_n n!}\beta^n
			+\mo\left(\left|x\right|^{-N}\right)\\
			&=\sum_{m=1}^{N-1}b_3(m)x^{-m}+\mo\left(\left|x\right|^{-N}\right).
		\end{align*}
		It is clear that the leading terms of the expansion \eqref{Thm: Psi_2[x,beta/x] for large x - 1} are obtained from $F(x,y)$ and $G(x,y)$.
	\end{itemize}
	
	Combining the above expansions gives the asymptotic expansion \eqref{Thm: Psi_2[x,beta/x] for large x - 1} for $\Psi_2$.
\end{proof}

%%###########################################################################################################################%%
\section{Appell function $F_2$}
%%###########################################################################################################################%%

Recall that $F_2$ satisfies the symmetry relation $F_2[a,b,b';c,c';x,y]=F_2[a,b',b;c',c;y,x]$. 
So it suffices to study the asymptotic expansion of $F_2\big[\frac{x}{y},y\big]$ as $y\rightarrow0$.

The following expression for $F_2$ is due to Jaeger (see, for example, \cite[Eq. (9)]{ABFMP},  \cite{Jaeger-1938} and \cite[p. 294]{SrKa}).
\begin{theorem}[Jaeger]
	Let $a,b,b'\in\CC,\,c,c'\in\CC\sm\ZZ_{\leqslant 0}$ and $a-b\notin\ZZ$. Then for $|y|<1$ and $|x|>|y|+1$,
	\begin{equation}\label{F_2 continuation}
		\begin{split}
			F_2[a,b,b';c,c';x,y]={}& \mathfrak{f}_c(b,a)\left(-x\right)^{-a}\sum_{n=0}^{\infty}{}_2F_1\left[\begin{matrix}
				-n,b'\\
				c'
			\end{matrix};y\right]\frac{\left(a\right)_n\left(1-c+a\right)_n}{\left(1-b+a\right)_n n!}\frac{1}{x^n}\\
			&+\mathfrak{f}_c(a,b)\left(-x\right)^{-b}\sum_{n=0}^{\infty}{}_2F_1\left[\begin{matrix}
				a-b-n,b'\\
				c'
			\end{matrix};y\right]\frac{\left(b\right)_n\left(1-c+b\right)_n}{\left(1-a+b\right)_n n!}\frac{1}{x^n},
		\end{split}
	\end{equation}
	where $\mathfrak{f}_{\gamma}(a,b)$ is given by \eqref{f_gamma(a,b) definition}.
\end{theorem}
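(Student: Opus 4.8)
The plan is to deduce \eqref{F_2 continuation} from an Euler-type single integral for $F_2$ together with the classical analytic continuation of ${}_2F_1$ at infinity. First I would record the representation
\[
F_2[a,b,b';c,c';x,y]=\frac{\Gamma(c')}{\Gamma(b')\Gamma(c'-b')}\int_0^1 t^{b'-1}(1-t)^{c'-b'-1}(1-yt)^{-a}\,{}_2F_1\!\left[\begin{matrix}a,b\\c\end{matrix};\frac{x}{1-yt}\right]\md t ,
\]
valid for $\Re(c')>\Re(b')>0$ and $|y|<1$; it follows at once by expanding the inner ${}_2F_1$, integrating term by term with the beta integral, and recognising the resulting double series as $F_2$. (The restriction $\Re(c')>\Re(b')>0$ will be lifted at the end.)

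Assuming now $|x|>|y|+1$, one has $|x/(1-yt)|\geqslant |x|/(1+|y|)>1$ uniformly for $t\in[0,1]$, so the connection formula
\[
{}_2F_1\!\left[\begin{matrix}a,b\\c\end{matrix};z\right]=\mathfrak{f}_c(b,a)(-z)^{-a}\,{}_2F_1\!\left[\begin{matrix}a,1-c+a\\1-b+a\end{matrix};\tfrac1z\right]+\mathfrak{f}_c(a,b)(-z)^{-b}\,{}_2F_1\!\left[\begin{matrix}b,1-c+b\\1-a+b\end{matrix};\tfrac1z\right],
\]
valid for $|z|>1$, $|\arg(-z)|<\pi$ and $a-b\notin\ZZ$, may be inserted into the integrand with $z=x/(1-yt)$. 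Writing $(-z)^{-a}=(-x)^{-a}(1-yt)^{a}$ and $(-z)^{-b}=(-x)^{-b}(1-yt)^{b}$ (a legitimate branch identity after a preliminary restriction on $x$, e.g.\ $x$ real and sufficiently negative), the first group of terms enjoys the cancellation $(1-yt)^{-a}(1-yt)^{a}=1$, while the second retains a factor $(1-yt)^{b-a}$. I would then expand each inner ${}_2F_1[\,\cdot\,;(1-yt)/x]$ as a power series in $(1-yt)/x$ — absolutely and uniformly convergent for $t\in[0,1]$ since $|(1-yt)/x|\leqslant(1+|y|)/|x|<1$ — interchange summation and integration, and evaluate the remaining $t$-integrals by Euler's integral,
\[
\frac{\Gamma(c')}{\Gamma(b')\Gamma(c'-b')}\int_0^1 t^{b'-1}(1-t)^{c'-b'-1}(1-yt)^{k}\md t={}_2F_1\!\left[\begin{matrix}-k,b'\\c'\end{matrix};y\right]
\]
for the first group and, with $(1-yt)^{b-a+k}$ in place of $(1-yt)^{k}$, the analogue with ${}_2F_1[a-b-k,b';c';y]$ for the second. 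Collecting the two resulting series yields exactly \eqref{F_2 continuation}.

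It remains to lift the auxiliary hypotheses. Both series on the right of \eqref{F_2 continuation} converge throughout $\{|y|<1,\ |x|>|y|+1\}$, their $k$-th coefficients being $\mo(\mathrm{poly}(k)\,(1+|y|)^{k}|x|^{-k})$ — for the second series one uses the crude bound $|{}_2F_1[a-b-k,b';c';y]|\ll(1+|y|)^{k}$, a consequence of the Euler integral and $|1-yt|\leqslant 1+|y|$. Hence both sides of \eqref{F_2 continuation} are analytic on that domain, and the identity — first established only for $x$ real and very negative and for $\Re(c')>\Re(b')>0$ — propagates to the full region and to all admissible $b',c'$ by analytic continuation. (An essentially equivalent route starts from the single-sum form $F_2=\sum_{n\geqslant 0}\frac{(a)_n(b')_n}{(c')_n n!}y^n\,{}_2F_1[a+n,b;c;x]$ and applies the same ${}_2F_1$-continuation termwise, at the cost of an extra reindexing of a double series.)

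The main obstacle is not the algebra, which collapses cleanly thanks to the cancellations $(1-yt)^{\pm a}$, but the bookkeeping of branches and domains: one must ensure that the arc $\{x/(1-yt):t\in[0,1]\}$ remains inside $\{|z|>1,\ |\arg(-z)|<\pi\}$ so that the ${}_2F_1$ connection formula applies uniformly along the whole path of integration, fix the branch so that $(-x/(1-yt))^{-a}=(-x)^{-a}(1-yt)^{a}$, and then carry out the final analytic-continuation step carefully to reach the stated region and general parameters.
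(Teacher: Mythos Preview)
Your argument is correct: the Euler-type integral for $F_2$ you wrote down is valid (it is the standard one, obtained by summing $\sum_m \frac{(a)_m(b)_m}{(c)_m m!}x^m\,{}_2F_1[a+m,b';c';y]$ via Euler's integral), the insertion of the ${}_2F_1$ connection formula at $z=x/(1-yt)$ is legitimate under the stated restrictions, the two families of $t$-integrals evaluate exactly to the polynomials ${}_2F_1[-n,b';c';y]$ and to ${}_2F_1[a-b-n,b';c';y]$ respectively, and your convergence and analytic-continuation remarks are enough to remove the temporary hypotheses $\Re(c')>\Re(b')>0$ and $x$ real negative.

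There is, however, nothing to compare against in the paper: the authors do \emph{not} prove this theorem. They state it as Jaeger's 1938 continuation formula and cite \cite{Jaeger-1938}, \cite[p.~294]{SrKa} and \cite[Eq.~(9)]{ABFMP} for it, then use it as input for Corollary~\ref{Cor: F2 small y}. Your proof is essentially Jaeger's original derivation (and the one reproduced in Srivastava--Karlsson), so it is the expected route; the alternative you mention at the end --- expanding $F_2$ as $\sum_n \frac{(a)_n(b')_n}{(c')_n n!}y^n\,{}_2F_1[a+n,b;c;x]$ and continuing each ${}_2F_1$ termwise --- also works and avoids the Euler-integral restriction on $b',c'$ from the outset, at the price of the double-sum reindexing you note.
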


Using a similar approach to the proof of Theorem \ref{Thm: Psi_1[x/y,y] asymptotics}, 
we obtain from Jaeger's formula \eqref{F_2 continuation} the following result.
\begin{corollary}\label{Cor: F2 small y}
	Let $a,b,b'\in\CC,\,c,c'\in\CC\sm\ZZ_{\leqslant 0}$ and $a-b\notin\ZZ$. For fixed $x\in\CC\sm\{0\}$,
	\begin{subequations}
		\begin{align}
			F_2\left[a,b,b';c,c';\frac{x}{y},y\right]
			={}& \mathfrak{f}_c(b,a)\left(-\frac{x}{y}\right)^{-a}\,\sum_{m=0}^{N-1}c_1(m)y^m+\mathfrak{f}_c(a,b)\left(-\frac{x}{y}\right)^{-b}\,
			\sum_{m=0}^{N-1}c_2(m)y^m\\
			& +\mo\left(\left|y\right|^{\Re(a)+N}+\left|y\right|^{\Re(b)+N}\right) \nonumber 
		\end{align}
		as $y\to 0$ in $\bigl|\arg\bigl(-\frac{x}{y}\bigr)\bigr|<\pi$, where $N$ is any positive integer, $\mathfrak{f}_{\gamma}(a,b)$ 
		is given by \eqref{f_gamma(a,b) definition} and
		\begin{align}
			c_1(m)& =\sum_{\substack{k,\ell\geqslant 0\\k+\ell=m}}\frac{\left(-k\right)_{\ell}(b')_{\ell}
				\left(a\right)_k\left(1-c+a\right)_k}{\left(c'\right)_{\ell}\left(1-b+a\right)_k \ell!\,k!}x^{-k},\\
			c_2(m)& =\sum_{\substack{k,\ell\geqslant 0\\k+\ell=m}}\frac{(b')_{\ell}\left(a-b-k\right)_{\ell}
				\left(b\right)_k\left(1-c+b\right)_k}{\left(c'\right)_{\ell}\left(1-a+b\right)_k \ell!\,k!}x^{-k}.
		\end{align}
	\end{subequations}
\end{corollary}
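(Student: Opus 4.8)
The plan is to follow the proof of Theorem~\ref{Thm: Psi_1[x/y,y] asymptotics} almost line by line, with Jaeger's formula \eqref{F_2 continuation} playing the role that the series expansion \eqref{Psi_1 series for |x|>1} played there. First I would fix $x\in\CC\sm\{0\}$ and observe that, as $y\to 0$ in the sector $\bigl|\arg(-x/y)\bigr|<\pi$, one has $|y|<1$ and $\bigl|\frac{x}{y}\bigr|>|y|+1$, so \eqref{F_2 continuation} applies with $x$ replaced by $x/y$. Writing the two series on its right-hand side (evaluated at $x/y$) as $V_1\bigl(\frac{x}{y},y\bigr)$ and $V_2\bigl(\frac{x}{y},y\bigr)$, this gives
\[
F_2\!\left[a,b,b';c,c';\frac{x}{y},y\right]=\mathfrak{f}_c(b,a)\left(-\frac{x}{y}\right)^{-a}V_1\!\left(\frac{x}{y},y\right)+\mathfrak{f}_c(a,b)\left(-\frac{x}{y}\right)^{-b}V_2\!\left(\frac{x}{y},y\right).
\]

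Next I would expand the Gauss functions inside $V_1$ and $V_2$ in powers of $y$: in $V_1$ the factor ${}_2F_1[-n,b';c';y]=\sum_{\ell=0}^{n}\frac{(-n)_\ell(b')_\ell}{(c')_\ell\,\ell!}y^\ell$ is a polynomial in $y$ of degree $n$, while in $V_2$ the factor ${}_2F_1[a-b-n,b';c';y]=\sum_{\ell\geqslant 0}\frac{(a-b-n)_\ell(b')_\ell}{(c')_\ell\,\ell!}y^\ell$ is an ordinary convergent series. Substituting these and interchanging the order of summation turns $V_1$ and $V_2$ into double series whose $n$-th outer term carries a factor $y^n/x^n$; collecting the coefficient of $y^m$ for $0\leqslant m\leqslant N-1$ and putting $m=k+\ell$ (with the outer summation index written $k$) reproduces exactly $c_1(m)$ and $c_2(m)$, so that
\[
V_1\!\left(\frac{x}{y},y\right)=\sum_{m=0}^{N-1}c_1(m)y^m+\mo\!\left(|y|^N\right),\qquad V_2\!\left(\frac{x}{y},y\right)=\sum_{m=0}^{N-1}c_2(m)y^m+\mo\!\left(|y|^N\right).
\]

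The one step that is not purely bookkeeping --- and the \emph{main obstacle} --- is to justify these two $\mo(|y|^N)$ remainders, i.e.\ the absolute convergence of the rearranged double series in a neighbourhood of $y=0$ and a uniform bound on the tail past $y^{N-1}$. For this I would use that $\frac{(a)_n(1-c+a)_n}{(1-b+a)_n\,n!}$ and $\frac{(b)_n(1-c+b)_n}{(1-a+b)_n\,n!}$ grow at most polynomially in $n$, together with the elementary estimates $\bigl|{}_2F_1[-n,b';c';y]\bigr|\ll n^{C}(1+|y|)^{n}$ and $\bigl|{}_2F_1[a-b-n,b';c';y]\bigr|\ll n^{C}2^{n}$, valid uniformly in $n$ once $|y|$ is small; choosing the disc around $y=0$ small enough (depending on $x$) that the resulting geometric ratios stay below $1$ makes both double series absolutely convergent, and then the tail is controlled by factoring out $|y|^N$ and bounding the residual series by a convergent geometric-type series, exactly as in the ``similarly''-step of Theorem~\ref{Thm: Psi_1[x/y,y] asymptotics}. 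The only genuine difference from that theorem is that the inner hypergeometric factor is now a ${}_2F_1$ rather than a ${}_1F_1$, so this bound replaces the finite-sum (first series) or convergent-series (second series) estimate used there.

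Finally I would reattach the prefactors. Since $\arg\bigl(-\frac{x}{y}\bigr)$ stays bounded in the given sector, $\bigl|\bigl(-\frac{x}{y}\bigr)^{-a}\bigr|\asymp|y|^{\Re(a)}$ and $\bigl|\bigl(-\frac{x}{y}\bigr)^{-b}\bigr|\asymp|y|^{\Re(b)}$, so the two $\mo(|y|^N)$ errors become $\mo\bigl(|y|^{\Re(a)+N}\bigr)$ and $\mo\bigl(|y|^{\Re(b)+N}\bigr)$; adding them gives the asserted error $\mo\bigl(|y|^{\Re(a)+N}+|y|^{\Re(b)+N}\bigr)$, and the two main terms are exactly $\mathfrak{f}_c(b,a)\bigl(-\frac{x}{y}\bigr)^{-a}\sum_{m}c_1(m)y^m$ and $\mathfrak{f}_c(a,b)\bigl(-\frac{x}{y}\bigr)^{-b}\sum_{m}c_2(m)y^m$, which completes the proof.
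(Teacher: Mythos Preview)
Your proposal is correct and follows exactly the route the paper takes: the paper's proof is a one-line remark that the result follows from Jaeger's formula \eqref{F_2 continuation} by the same series-manipulation argument as in Theorem~\ref{Thm: Psi_1[x/y,y] asymptotics}, and you have carried this out faithfully (with the ${}_2F_1$ factors replacing the ${}_1F_1$ factors). Your additional remarks on the tail estimates go beyond what the paper spells out but are consistent with its implicit reasoning.
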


%%###########################################################################################################################%%
\section{Two methods}\label{Sect: methods}
%%###########################################################################################################################%%
In this section, we propose two elementary methods, which are easy to use and powerful for deducing the asymptotics of multiple hypergeometric functions.

\subsection{Uniformity approach}\label{Sect: uniformity approach}
Our proof of Theorem \ref{Thm: Psi_2[x,beta/x] for large x} is based on the approach used in \cite{HaLu2}, 
which is indeed effective for deriving the asymptotics of multiple hypergeometric functions. Let us give a detailed description:
\begin{itemize}
	\item (\textbf{Uniformity approach})
	{\it
		For $n\in\ZZ_{\geqslant 0}$, denote
		\begin{equation}\label{F_n(z) definition}
			F_n(z)={}_pF_q\left[\begin{matrix}
				a_1+\lambda_1 n,\cdots,a_r+\lambda_r n,a_{r+1},\cdots,a_p\\
				b_1+\mu_1 n,\cdots, b_s+\mu_s n,b_{s+1},\cdots,b_q
			\end{matrix};z\right],
		\end{equation}
		where $\lambda_1,\cdots,\lambda_r,\mu_1,\cdots,\mu_s$ are fixed complex numbers. In view of {\rm\cite[Sect. 5]{LiWo}}, for fixed $n$,
		\[F_n(z)\sim A_n(z)+E_n(z),\quad z\to\infty,\]
		where $A_n(z)$ is the algebraic expansion and $E_n(z)$ is the exponential expansion.
		
		\quad To derive the asymptotics of the function
		\begin{equation}\label{F(z,w) definition}
			\mathcal{F}(z,w):=\sum_{n=0}^{\infty}a_n F_n(z)w^n,
		\end{equation}
		we first deduce the estimate for $F_n(z)$, that is akin to \eqref{2F2 expansion for -n} and valid uniformly for $n\geqslant 0$, namely,
		\begin{equation}\label{F_n(z) estimate}
			F_n(z)=A_n^{(N)}(z)+E_n^{(N)}(z)+\mo\bigl(c_n r(z)\bigr),\quad n\in\ZZ_{\geqslant 0},\,|z|\geqslant K,
		\end{equation}
		where $A_n^{(N)}(z)$ (resp. $E_n^{(N)}(z)$) is the sum of first $N$ terms of $A_n(z)$ (resp. $E_n(z)$). 
		Then inserting \eqref{F_n(z) estimate} into \eqref{F(z,w) definition} 
		and using the series manipulation technique, we get the asymptotic expansion of $\mathcal{F}(z,w)$.
	}
\end{itemize}

Let us present the powerful utility of the uniformity approach. Very recently, Brychkov and Savischenko 
systematically studied the formulas of the Horn functions 
$H_1,\cdots,H_7$ and the confluent Horn functions $H_1^{(c)},\cdots,H_{11}^{(c)}$. 
For example, in \cite{BrSav1, BrSav3, BrSav4}, they obtained the series expansions below:
\begin{align*}
	H_2^{(c)}[a,b,c;d;x,y]& =\sum_{n=0}^{\infty}\frac{\left(c\right)_n}{\left(1-a\right)_n}\,_2F_1\left[\begin{matrix}
		a-n,b\\
		d
	\end{matrix};x\right]\frac{(-y)^n}{n!},\\
	H_3^{(c)}[a,b;d;x,y]& =\sum_{n=0}^{\infty}\frac{1}{\left(1-a\right)_n}\,_2F_1\left[\begin{matrix}
		a-n,b\\
		d
	\end{matrix};x\right]\frac{(-y)^n}{n!},\\
	H_4^{(c)}[a,c;d;x,y]& =\sum_{n=0}^{\infty}\frac{\left(c\right)_n}{\left(1-a\right)_n}\,_1F_1\left[\begin{matrix}
		a-n\\
		d
	\end{matrix};x\right]\frac{(-y)^n}{n!},\\
	H_5^{(c)}[a;c;x,y]& =\sum_{n=0}^{\infty}\frac{1}{\left(1-a\right)_n}\,_1F_1\left[\begin{matrix}
		a-n\\
		c
	\end{matrix};x\right]\frac{(-y)^n}{n!}.
\end{align*}
By repeating the proof of Theorem \ref{Thm: 2F2 expansion}, one can obtain an expansion of ${}_2F_1[a-n,b;c;x]$, 
which is similar to \eqref{2F2 expansion for -n}. Then the use of the uniformity approach will give the full asymptotic expansions of 
$H_k^{(c)}\big[x,\frac{\beta}{x}\big]$ $(k=2,3,4,5)$ for large $x$. Furthermore, in view of the series expansions derived in \cite{AnDG, BrSav2}, 
the same apply to the Appell functions $F_1,F_3$ and the Humbert functions $\Phi_1,\Phi_2,\Phi_3,\Xi_1$.

\subsection{Separation method}
We provide a heuristic approach, called \textbf{separation method}, to reproduce \eqref{Psi_1 series for |x|>1} and \eqref{F_2 continuation}. 
This method is of great benefit to physicists, although it is not rigorous in general.

\begin{proposition}\label{Prop: Psi_1 for large x}
	Under the necessary conditions on $a,b,c,c'$, for $x\to +\infty$ and fixed $y>0$, one has
	\begin{equation}\label{Psi_1 series reproduced}
		\begin{split}
			\Psi_1[a,b;c,c';-x,y]\sim{}&
			\mathfrak{f}_{c}(b,a) x^{-a}\sum_{k=0}^{\infty}{}_1F_1\left[\begin{matrix}
				-k\\
				c'
			\end{matrix};y\right]\frac{\left(a\right)_k\left(1-c+a\right)_k}{\left(1-b+a\right)_k k!}\frac{1}{\left(-x\right)^k}\\
			&+\mathfrak{f}_{c}(a,b)x^{-b}\sum_{\ell=0}^{\infty}{}_1F_1\left[\begin{matrix}
				a-b-\ell\\
				c'
			\end{matrix};y\right]\frac{\left(b\right)_{\ell}\left(1-c+b\right)_{\ell}}{\left(1-a+b\right)_{\ell}\ell!}\frac{1}{\left(-x\right)^{\ell}},
		\end{split}
	\end{equation}
	where $\mathfrak{f}_{\gamma}(a,b)$ is defined by \eqref{f_gamma(a,b) definition}. 
	See also Remark~\ref{rem3.7} after replacing $y$ by $\frac{y}{x}$ in (\ref{Psi_1 series reproduced}).
\end{proposition}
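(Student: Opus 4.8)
The plan is to realize the ``separation method'' as a two-stage reduction: first collapse $\Psi_1$ to a single series whose building blocks are Gauss functions, and then feed in the large-argument expansion of ${}_2F_1$, separating the two Puiseux branches it produces. Concretely, summing the defining double series over $m$ first gives
\[
\Psi_1[a,b;c,c';-x,y]=\sum_{n=0}^{\infty}\frac{(a)_n}{(c')_n}\,{}_2F_1\!\left[\begin{matrix}a+n,\,b\\c\end{matrix};-x\right]\frac{y^n}{n!},
\]
an identity that extends from $|x|<1$ to all of $\DD_{\Psi_1}$ by analytic continuation (and whose $n$-series converges for every finite $x,y$ because of the $n!$ in the denominator). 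Into each summand I would insert the connection formula
\[
{}_2F_1\!\left[\begin{matrix}\alpha,\,\beta\\\gamma\end{matrix};-x\right]=\mathfrak{f}_{\gamma}(\beta,\alpha)\,x^{-\alpha}\,{}_2F_1\!\left[\begin{matrix}\alpha,\,1-\gamma+\alpha\\1-\beta+\alpha\end{matrix};-\tfrac1x\right]+\mathfrak{f}_{\gamma}(\alpha,\beta)\,x^{-\beta}\,{}_2F_1\!\left[\begin{matrix}\beta,\,1-\gamma+\beta\\1-\alpha+\beta\end{matrix};-\tfrac1x\right]
\]
with $\alpha=a+n,\ \beta=b,\ \gamma=c$ (permissible since $a-b\notin\ZZ$ keeps $a+n-b\notin\ZZ$), and then split the double series into the ``$x^{-a}$-branch'' $G_1$, coming from the first term where $x^{-\alpha}=x^{-a-n}$, and the ``$x^{-b}$-branch'' $G_2$ from the second.

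For $G_1$, after interchanging the $n$- and the inner ($j$-) summations I would use $(a)_n(a+n)_j=(a)_{n+j}$, $(1-c+a)_n(1-c+a+n)_j=(1-c+a)_{n+j}$, and the same for the lower parameter, to reindex by $k:=n+j$; the prefactor $\mathfrak{f}_c(b,a+n)$ collapses to $\mathfrak{f}_c(b,a)\,(1-c+a)_n/(1-b+a)_n$ via the reflection identities $\Gamma(b-a-n)=(-1)^n\Gamma(b-a)/(1-b+a)_n$ and its analogue for $\Gamma(c-a-n)$. The inner sum over $n=0,\dots,k$ then simplifies, using $\binom{k}{n}=(-1)^n(-k)_n/n!$, to $\frac{(-1)^k}{k!}\,{}_1F_1[-k;c';y]$, and since $(-1)^kx^{-k}=(-x)^{-k}$ this is precisely the first sum in \eqref{Psi_1 series reproduced}. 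For $G_2$ the prefactor already simplifies, $(a)_n/\Gamma(a+n)=1/\Gamma(a)$ and $\Gamma(a+n-b)=\Gamma(a-b)(a-b)_n$, so $\mathfrak{f}_c(a+n,b)=\mathfrak{f}_c(a,b)(a-b)_n$; the only snag is that the inner ${}_2F_1$ carries the $n$-dependent lower parameter $1-a+b-n$. This is removed by the Pochhammer identity
\[
\frac{(a-b)_n}{(1-a+b-n)_j}=\frac{(a-b-j)_n}{(1-a+b)_j},
\]
a one-line consequence of $\Gamma(z)\Gamma(1-z)=\pi/\sin\pi z$ (all the $\sin\pi(a-b)$ factors and signs cancel). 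With the $n$-dependence pulled out of the ${}_2F_1$, the sum over $n$ becomes $\sum_n\frac{(a-b-j)_n}{(c')_n}\frac{y^n}{n!}={}_1F_1[a-b-j;c';y]$, and $G_2$ turns into the second sum in \eqref{Psi_1 series reproduced}. Adding $G_1$ and $G_2$ gives the claimed expansion.

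The step I expect to be the real obstacle — and the reason the method is billed as only heuristic in general — is the term-by-term interchange of the two infinite summations after the asymptotic (connection) expansion of ${}_2F_1$ has been inserted: for building blocks whose one-variable asymptotics are genuinely divergent series, this rearrangement is purely formal. For $\Psi_1$ with $|x|>1$, however, the inner ${}_2F_1[\,\cdot\,;-1/x]$ converge and the resulting double series is absolutely convergent — which one checks from the crude bounds $|\mathfrak{f}_c(b,a+n)|=\mo(n^{\Re(b-c)})$ and $(a)_n/(c')_n=\mo(n^{\Re(a-c')})$ against the $n!$ — so in this instance the ``$\sim$'' can even be promoted to an exact identity, and the separation method, though motivated heuristically, is here fully rigorous.
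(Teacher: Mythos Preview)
Your argument is correct and, in fact, rigorous: you have rederived the convergent expansion \eqref{Psi_1 series for |x|>1} directly, by expanding $\Psi_1$ as a $y$-series of Gauss functions ${}_2F_1[a+n,b;c;-x]$, applying the exact connection formula at $z=\infty$, and reorganising the resulting double series via the Pochhammer identities you wrote down (all of which check out). The absolute convergence you invoke for $|x|>1$ is genuine, so the interchanges are justified and the ``$\sim$'' is really an equality.

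However, your route is entirely different from the paper's. The paper's point in this proposition is to \emph{illustrate the separation method}: it starts from the Laplace integral \eqref{Psi_1 Laplace integral}, physically splits $\int_0^{\infty}$ at $u=\eta/x$ with $\eta\to\infty$, $\eta=o(x)$, and then treats the two pieces formally --- substituting $v=xu$ and a Cauchy product in $\mathfrak{T}_1^{(\eta)}$, inserting the algebraic expansion of ${}_1F_1[b;c;-xu]$ in $\mathfrak{T}_2^{(\eta)}$, and closing each with a Mellin transform from tables. The method is avowedly heuristic (the tails are dropped without estimates), and the proposition serves as a test case where the answer is already known. Your approach, by contrast, is a clean algebraic derivation that never touches an integral and never leaves the realm of convergent series; what it buys is rigor, but what it loses is the whole demonstration --- you have not used, and therefore not exhibited, the separation method at all.
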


\begin{proof}
	Our starting point is the integral representation \eqref{Psi_1 Laplace integral}. For large $x$, we split the integral into two parts:
	\begin{align*} \Psi_1[a,b;c,c';-x,y]&=\frac{1}{\Gamma(a)}\left(\int_0^{\eta/x}+\int_{\eta/x}^{\infty}\right)u^{a-1}\me^{-u}\,_1F_1\left[\begin{matrix}
			b\\
			c
		\end{matrix};-xu\right]
		{}_0F_1\left[\begin{matrix}
			-\\
			c'
		\end{matrix};yu\right]\md u\\
		&=:\mathfrak{T}_1^{(\eta)}+\mathfrak{T}_2^{(\eta)},
	\end{align*}
	where $\eta$ is taken such that both $\eta\to+\infty$ and $\eta=o(x)$ hold as $x\to+\infty$. 
	
	We begin with the integral $\mathfrak{T}_1^{(\eta)}$. Setting $v=ux$, we obtain 
	\[
	\mathfrak{T}_1^{(\eta)}=\frac{x^{-a}}{\Gamma(a)}\int_{0}^{\eta}v^{a-1}
	\mathrm{e}^{-\frac{v}{x}}{}_{1}F_{1}\left[\begin{matrix}
		b\\
		c
	\end{matrix};-v\right]
	{}_{0}F_{1}\left[\begin{matrix}
		-\\
		c'
	\end{matrix};\frac{vy}{x}\right]\md v.
	\] 
	Since
	\begin{align*}
		\me^{-\frac{v}{x}}\,_0F_1\left[\begin{matrix}
			-\\
			c'
		\end{matrix};\frac{vy}{x}\right]
		&=\sum_{n=0}^{\infty}\frac{\left(-1\right)^n}{n!}\left(\frac{v}{x}\right)^n
		\sum_{m=0}^{\infty}\frac{v^m}{\left(c'\right)_m m!}\left(\frac{y}{x}\right)^m\\
		&=\sum_{k=0}^{\infty}\left(\frac{v}{x}\right)^k\sum_{m=0}^k\frac{\left(-1\right)^{k-m} y^m}{\left(c'\right)_m m!\,(k-m)!}\\
		&=\sum_{k=0}^{\infty}\left(\frac{v}{x}\right)^k\frac{\left(-1\right)^k}{k!}\sum_{m=0}^k\frac{\left(-k\right)_m}{\left(c'\right)_m}\frac{y^m}{m!}
		=\sum_{k=0}^{\infty}\left(\frac{v}{x}\right)^k\frac{\left(-1\right)^k}{k!}\,{}_1F_1\left[\begin{matrix}
			-k\\
			c'
		\end{matrix};y\right],
	\end{align*}
	we have
	\begin{align*}
		\mathfrak{T}_1^{(\eta)}
		& =\frac{x^{-a}}{\Gamma(a)}\sum_{k=0}^{\infty}\frac{\left(-x\right)^{-k}}{k!}\,{}_1F_1\left[\begin{matrix}
			-k\\
			c'
		\end{matrix};y\right]\int_0^{\eta}v^{a-1+k}\,{}_1F_1\left[\begin{matrix}
			b\\
			c
		\end{matrix};-v\right]\md v.
	\end{align*}
	
	To proceed, we notice that \cite[p. 408, Eq. (3.28.1.1)]{BrMS}
	\[\int_0^{\infty}t^{s-1}\,_1F_1\left[\begin{matrix}
		a\\
		b
	\end{matrix};-t\right]\md t=\frac{\Gamma(s)\Gamma(b)\Gamma(a-s)}{\Gamma(a)\Gamma(b-s)},\quad 0<\Re(s)<\Re(a).	
	\]
	By formally letting $x\to+\infty$, we have 
	\begin{align*}
		\mathfrak{T}_1^{(\eta)}& \sim \frac{x^{-a}}{\Gamma(a)}\sum_{k=0}^{\infty}\frac{\left(-x\right)^{-k}}{k!}\,
		{}_1F_1\left[\begin{matrix}
			-k\\
			c'
		\end{matrix};y\right]
		\int_0^{\infty}v^{a-1+k}\,
		{}_1F_1\left[\begin{matrix}
			b\\
			c
		\end{matrix};-v\right]\md v\\
		& =\frac{x^{-a}}{\Gamma(a)}\sum_{k=0}^{\infty}\frac{\left(-x\right)^{-k}}{k!}\,{}_1F_1\left[\begin{matrix}
			-k\\
			c'
		\end{matrix};y\right]\frac{\Gamma(a+k)\Gamma(b-a-k)\Gamma(c)}{\Gamma(c-a-k)\Gamma(b)}\\
		& =\frac{\Gamma(c)\Gamma(b-a)}{\Gamma(b)\Gamma(c-a)}x^{-a}\sum_{k=0}^{\infty}{}_1F_1\left[\begin{matrix}
			-k\\
			c'
		\end{matrix};y\right]\frac{\left(a\right)_k\left(1-c+a\right)_k}{\left(1-b+a\right)_k k!}\frac{1}{\left(-x\right)^k},
	\end{align*}
	where in the last line we used the identity
	\begin{equation}\label{Gamma(z-k) identity}
		\Gamma(z-k)=\left(-1\right)^k\frac{\Gamma(z)}{\left(1-z\right)_k}.
	\end{equation}
	
	We now turn to the integral $\mathfrak{T}_2^{(\eta)}$ and use a formal derivation to obtain an expansion. 
	Since the ${}_{1}F_{1}$ function in the integrand has a negative argument, the algebraic expansion should dominate, which inspires us to write 
	\begin{align*}
		\mathfrak{T}_2^{(\eta)}& \sim \frac{1}{\Gamma(a)}\int_{\eta/x}^{\infty}u^{a-1}\me^{-u}\,
		{}_0F_1\left[\begin{matrix}
			-\\
			c'
		\end{matrix};yu\right]\frac{\Gamma(c)}{\Gamma(c-b)}\sum_{n=0}^{\infty}\frac{\left(b\right)_n\left(1+b-c\right)_n}{n!}\left(xu\right)^{-b-n}\md u\\
		& =\frac{\Gamma(c)\Gamma(c')}{\Gamma(a)\Gamma(c-b)}
		\frac{y^{\frac{1}{2}(1-c')}}{x^b}
		\sum_{n=0}^{\infty}\frac{\left(b\right)_n\left(1+b-c\right)_n}{n!}x^{-n}
		\int_{\eta/x}^{\infty}\me^{-u}u^{a-b-\frac{1}{2}(c'+1)-n}I_{c'-1}(2\sqrt{yu})\md u,
	\end{align*}
	where we used \eqref{0F1 identity}. By formally letting $x\rightarrow+\infty$ and taking into account that $\eta=o(x)$, we get
	\begin{align*}
		\mathfrak{T}_2^{(\eta)}& \sim \frac{\Gamma(c)\Gamma(c')}{\Gamma(a)\Gamma(c-b)}
		\frac{y^{\frac{1}{2}(1-c')}}{x^b}\sum_{n=0}^{\infty}\frac{\left(b\right)_n\left(1+b-c\right)_n}{n!}x^{-n}
		\int_0^{\infty}\me^{-u}u^{a-b-\frac{1}{2}(c'+1)-n}I_{c'-1}(2\sqrt{yu})\md u\\
		& =\frac{2\,\Gamma(c)\Gamma(c')}{\Gamma(a)\Gamma(c-b)}\frac{y^{\frac{1}{2}(1-c')}}{x^b}
		\sum_{n=0}^{\infty}\frac{\left(b\right)_n\left(1+b-c\right)_n}{n!}x^{-n}\int_0^{\infty}\me^{-w^2}
		w^{2a-2b-c'-2n}I_{c'-1}(2\sqrt{y}\,w)\,
		\md w\\
		& \sim\frac{\Gamma(c)}{\Gamma(a)\Gamma(c-b)}x^{-b}\sum_{n=0}^{\infty}\frac{\left(b\right)_n\left(1+b-c\right)_n}{n!}\Gamma(a-b-n)
		\,{}_1F_1\left[\begin{matrix}
			a-b-n\\
			c'
		\end{matrix};y\right]
		x^{-n},
	\end{align*}
	where we used the Mellin transform \cite[Eq. (3.13.2.3)]{BrMS}
	\[
	\int_0^{\infty}t^{s-1}\me^{-at^2}I_{\nu}\left(bt\right)\md t=\frac{2^{-\nu-1}b^{\nu}}{a^{\frac{1}{2}(s+\nu)}}
	\frac{\Gamma\big(\frac{1}{2}(s+\nu)\big)}{\Gamma(\nu+1)}\,_1F_1\left[\begin{matrix}
		\frac{1}{2}(s+\nu)\\
		\nu+1
	\end{matrix};\frac{b^2}{4a}\right],\quad
	\Re(a)>0, \Re(s+\nu)>0.
	\]
	Then use \eqref{Gamma(z-k) identity} to yield
	\[\mathfrak{T}_2^{(\eta)}\sim\frac{\Gamma(c)\Gamma(a-b)}{\Gamma(a)\Gamma(c-b)}x^{-b}\sum_{n=0}^{\infty}{}_1F_1\left[\begin{matrix}
		a-b-n\\
		c'
	\end{matrix};y\right]\frac{\left(b\right)_n\left(1-c+b\right)_n}{\left(1-a+b\right)_n n!}\frac{1}{\left(-x\right)^n},\]
	which is the second term in \eqref{Psi_1 series reproduced}.
\end{proof}

\begin{remark}
	Starting with \eqref{Psi_1 Laplace integral} and applying the separation method, one may rigorously deduce the leading asymptotics of 
	$\Psi_1[x,y]$ for $x<1$ and $y\to\infty$ in $\left|\arg(y)\right|<\frac{\pi}{2}$, which was given in {\rm\cite[Section 3.1]{HaLu3}}.
\end{remark}

\begin{proposition}
	Under the necessary conditions on $a,b,b',c,c'$, for $x\to +\infty$ and fixed $y<1$, one has
	\begin{align}
		F_2[a,b,b';c,c';-x,y]\sim{}&
		\mathfrak{f}_{c}(b,a) x^{-a}\sum_{n=0}^{\infty}{}_2F_1\left[\begin{matrix}
			-n,b'\\
			c'
		\end{matrix};y\right]\frac{\left(a\right)_n\left(1-c+a\right)_n}{\left(1-b+a\right)_n n!}\frac{1}{\left(-x\right)^n} \nonumber \\
		&+\mathfrak{f}_{c}(a,b)x^{-b}\sum_{n=0}^{\infty}{}_2F_1\left[\begin{matrix}
			a-b-n,b'\\
			c'
		\end{matrix};y\right]\frac{\left(b\right)_n\left(1-c+b\right)_n}{\left(1-a+b\right)_n n!}\frac{1}{\left(-x\right)^n},
	\end{align}
	where $\mathfrak{f}_{\gamma}(a,b)$ is defined by \eqref{f_gamma(a,b) definition}. 
\end{proposition}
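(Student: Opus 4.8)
The plan is to mimic, essentially verbatim, the proof of Proposition~\ref{Prop: Psi_1 for large x} (the separation method for $\Psi_1$), replacing throughout the factor ${}_0F_1[-;c';yu]$ appearing in \eqref{Psi_1 Laplace integral} by the factor ${}_1F_1[b';c';yu]$. The starting point is the Laplace-type integral representation
\[
F_2[a,b,b';c,c';-x,y]=\frac{1}{\Gamma(a)}\int_0^{\infty}u^{a-1}\me^{-u}\,{}_1F_1\left[\begin{matrix}b\\c\end{matrix};-xu\right]{}_1F_1\left[\begin{matrix}b'\\c'\end{matrix};yu\right]\md u\qquad(\Re(a)>0),
\]
which follows from $(a)_{m+n}=\Gamma(a)^{-1}\int_0^{\infty}u^{a+m+n-1}\me^{-u}\md u$ by termwise summation, and which reduces to \eqref{Psi_1 Laplace integral} under the confluence ${}_1F_1[1/\varepsilon;c';\varepsilon yu]\to{}_0F_1[-;c';yu]$. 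As in the cited proof, for large $x$ I split the integral at $u=\eta/x$, where $\eta$ is chosen so that $\eta\to+\infty$ and $\eta=o(x)$ as $x\to+\infty$, writing $F_2[a,b,b';c,c';-x,y]=\mathfrak{T}_1^{(\eta)}+\mathfrak{T}_2^{(\eta)}$.

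For $\mathfrak{T}_1^{(\eta)}$ I would substitute $v=xu$ and expand $\me^{-v/x}\,{}_1F_1[b';c';vy/x]$ in powers of $v/x$; collecting the coefficient of $(v/x)^k$ and using $(-k)_m/m!=(-1)^m\binom{k}{m}$ collapses it into $\frac{(-1)^k}{k!}\,{}_2F_1[-k,b';c';y]$, the exact analogue of the collapse of $\me^{-v/x}\,{}_0F_1[-;c';vy/x]$ into ${}_1F_1[-k;c';y]$ in the $\Psi_1$ case. After interchanging sum and integral, letting $x\to+\infty$ (so that $\eta\to\infty$) and invoking the Mellin transform $\int_0^{\infty}t^{s-1}\,{}_1F_1[b;c;-t]\,\md t=\Gamma(s)\Gamma(c)\Gamma(b-s)/(\Gamma(b)\Gamma(c-s))$ with $s=a+k$, followed by \eqref{Gamma(z-k) identity} to rewrite $\Gamma(a+k)$, $\Gamma(b-a-k)$ and $\Gamma(c-a-k)$ in terms of Pochhammer symbols (the two factors $(-1)^k$ cancelling), one recovers precisely the first series in the statement, with prefactor $\mathfrak{f}_c(b,a)$ as in \eqref{f_gamma(a,b) definition}.

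For $\mathfrak{T}_2^{(\eta)}$ the argument $-xu$ of the first ${}_1F_1$ is large and negative throughout $u\geqslant\eta/x$, so its algebraic expansion dominates: I substitute the algebraic group of \eqref{1F1 asymptotics}, push the lower limit $\eta/x$ down to $0$, and evaluate the Laplace transform $\int_0^{\infty}u^{s-1}\me^{-u}\,{}_1F_1[b';c';yu]\,\md u=\Gamma(s)\,{}_2F_1[s,b';c';y]$ (valid for $|y|<1$, obtained by termwise integration of the ${}_1F_1$-series) with $s=a-b-n$; a further use of \eqref{Gamma(z-k) identity} applied to $\Gamma(a-b-n)$ then produces the second series in the statement, with prefactor $\mathfrak{f}_c(a,b)$. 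Adding $\mathfrak{T}_1^{(\eta)}$ and $\mathfrak{T}_2^{(\eta)}$ yields the asserted expansion, which in particular reproduces Jaeger's continuation formula \eqref{F_2 continuation} in an asymptotic sense, just as Proposition~\ref{Prop: Psi_1 for large x} reproduces its $\Psi_1$-counterpart.

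The step I expect to be the main obstacle is precisely the one that makes this a heuristic derivation rather than a rigorous one. The Mellin transform of ${}_1F_1[b;c;-t]$ converges only for $0<\Re(s)<\Re(b)$, so the replacement $\int_0^{\eta}\to\int_0^{\infty}$ in $\mathfrak{T}_1^{(\eta)}$ is legitimate only for the finitely many $k$ with $\Re(a+k)<\Re(b)$; for the remaining $k$ the identity must be read as an analytic continuation in the parameters. Likewise, in $\mathfrak{T}_2^{(\eta)}$ one discards the exponentially small second group of \eqref{1F1 asymptotics} (the one carrying the factor $\me^z$), inserts an expansion that is only asymptotic, not exact, inside the integral, and then extends the lower limit to $0$, where the integrand ceases to be integrable once $\Re(a-b-n)\leqslant 0$; again this is absorbed only by analytic continuation of the Laplace transform in $s$, together with the hypothesis $|y|<1$. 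None of these manoeuvres can be justified within the separation method itself, which is why the result is stated as a proposition proved formally; on the other hand, no genuinely new difficulty arises beyond Proposition~\ref{Prop: Psi_1 for large x}, since every estimate required is merely the ${}_1F_1$-analogue of one already used there.
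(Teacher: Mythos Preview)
Your proposal is correct and follows essentially the same route as the paper's proof: the same Laplace integral, the same split at $\eta/x$, the same collapse of $\me^{-v/x}\,{}_1F_1[b';c';vy/x]$ into $\frac{(-1)^k}{k!}\,{}_2F_1[-k,b';c';y]$ for $\mathfrak{T}_1^{(\eta)}$, and the same use of the algebraic expansion together with the Laplace transform $\int_0^{\infty}u^{s-1}\me^{-u}\,{}_1F_1[b';c';yu]\,\md u=\Gamma(s)\,{}_2F_1[s,b';c';y]$ for $\mathfrak{T}_2^{(\eta)}$. Your discussion of the heuristic gaps (analytic continuation beyond the strip of convergence, discarding the exponentially small group, extending the lower limit) is also accurate and in the spirit of the paper's treatment of the separation method.
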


\begin{proof}
	Let us start with the following Laplace-type integral representation of $F_2$:
	\begin{equation*}%\label{F_2 Laplace integral}
		F_2[a,b,b';c,c';x,y]
		=\int_0^{\infty}t^{a-1}\me^{-t}\,
		{}_1F_1\left[
		\begin{matrix}
			b\\
			c
		\end{matrix}
		;xt\right]
		{}_1F_1\left[
		\begin{matrix}
			b'\\
			c'\end{matrix};yt\right]\md t,
	\end{equation*}
	which, in view of \eqref{1F1 asymptotics}, is valid for $\Re(a)>0$ and $\Re(x+y)<1$. 
	As in the proof of Proposition \ref{Prop: Psi_1 for large x}, we split the integral into two parts: 
	\begin{align*}
		F_2[a,b,b';c,c';-x,y]
		&=\left(\int_0^{\eta/x}+\int_{\eta/x}^{\infty}\right)t^{a-1}\me^{-t}\,
		{}_1F_1\left[
		\begin{matrix}
			b\\
			c
		\end{matrix}
		;-xt\right]
		{}_1F_1\left[
		\begin{matrix}
			b'\\
			c'\end{matrix};yt\right]\md t\\
		&=:\mathfrak{T}_1^{(\eta)}+\mathfrak{T}_2^{(\eta)},
	\end{align*}
	where $\eta>0$ is taken such that both $\eta\to+\infty$ and $\eta=o(x)$ hold as $x\to+\infty$.
	
	Recall that \cite[Eq. (3.28.2.1)]{BrMS}
	\[
	\int_0^{\infty}t^{s-1}\me^{-\sigma t}\,_1F_1\left[\begin{matrix}
		a\\
		b
	\end{matrix};\omega t\right]\md t=\frac{\Gamma(s)}{\sigma^s}\,_2F_1\left[\begin{matrix}
		a,s\\
		b
	\end{matrix};\frac{\omega}{\sigma}\right].
	\]
	We follow the proof of Proposition \ref{Prop: Psi_1 for large x} and obtain that as $x\to+\infty$,
	\begin{align*}
		\mathfrak{T}_1^{(\eta)}& :=\frac{1}{\Gamma(a)}\int_0^{\eta/x}u^{a-1}\me^{-u}\,_1F_1\left[\begin{matrix}
			b\\
			c
		\end{matrix};-xu\right]
		{}_1F_1\left[\begin{matrix}
			b'\\
			c'
		\end{matrix};yu\right]\md u\\
		& =\frac{x^{-a}}{\Gamma(a)}\sum_{k=0}^{\infty}\frac{\left(-x\right)^{-k}}{k!}\,_2F_1\left[\begin{matrix}
			-k,b'\\
			c'
		\end{matrix};y\right]\int_0^{\eta}v^{a+k-1}\,_1F_1\left[\begin{matrix}
			b\\
			c
		\end{matrix};-v\right]\md v\\
		& \sim \frac{x^{-a}}{\Gamma(a)}\sum_{k=0}^{\infty}\frac{\left(-x\right)^{-k}}{k!}\,_2F_1\left[\begin{matrix}
			-k,b'\\
			c'
		\end{matrix};y\right]\int_0^{\infty}v^{a+k-1}\,_1F_1\left[\begin{matrix}
			b\\
			c
		\end{matrix};-v\right]\md v\\
		& =\frac{\Gamma(c)\Gamma(b-a)}{\Gamma(b)\Gamma(c-a)}x^{-a}\sum_{n=0}^{\infty}{}_2F_1\left[\begin{matrix}
			-n,b'\\
			c'
		\end{matrix};y\right]\frac{\left(a\right)_n\left(1-c+a\right)_n}{\left(1-b+a\right)_n n!}\frac{1}{\left(-x\right)^n},\\
		\mathfrak{T}_2^{(\eta)}& :=\frac{1}{\Gamma(a)}\int_{\eta/x}^{\infty}u^{a-1}\me^{-u}\,_1F_1\left[\begin{matrix}
			b\\
			c
		\end{matrix};-xu\right]{}_1F_1\left[\begin{matrix}
			b'\\
			c'
		\end{matrix};yu\right]\md u\\
		& \sim \frac{1}{\Gamma(a)}\int_{\eta/x}^{\infty}
		u^{a-1}\me^{-u}\,_1F_1\left[\begin{matrix}
			b'\\
			c'
		\end{matrix};yu\right]\frac{\Gamma(c)}{\Gamma(c-b)}\sum_{n=0}^{\infty}\frac{\left(b\right)_n\left(1+b-c\right)_n}{n!}\left(xu\right)^{-b-n}\md u\\
		& \sim \frac{\Gamma(c)}{\Gamma(a)\Gamma(c-b)}x^{-b}\sum_{n=0}^{\infty}\frac{\left(b\right)_n\left(1+b-c\right)_n}{n!}x^{-n}
		\int_0^{\infty}\me^{-u}u^{a-b-n-1}\,_1F_1\left[
		\begin{matrix}
			b'\\
			c'
		\end{matrix};yu\right]\md u\\
		& \sim \frac{\Gamma(c)\Gamma(a-b)}{\Gamma(a)\Gamma(c-b)}x^{-b}\sum_{n=0}^{\infty}{}_2F_1\left[\begin{matrix}
			a-b-n,b'\\
			c'
		\end{matrix};y\right]\frac{\left(b\right)_n\left(1-c+b\right)_n}{\left(1-a+b\right)_n n!}\frac{1}{\left(-x\right)^n}.
	\end{align*}
	Combining these results gives the desired formula.
\end{proof}

%%###########################################################################################################################%%
\section{Concluding remarks}
%%###########################################################################################################################%%

In this paper, we deduced the asymptotics of the Humbert functions $\Psi_1,\Psi_2$ and the Appell function $F_2$ under the condition \eqref{initial asymptotic condition}. 
Our methods should be generalizable to find similarly the asymptotics of the Humbert functions $\Phi_1, \Phi_2, \Phi_3, \Xi_1, \Xi_2$ and the Appell functions $F_1,F_3$; see Remark \ref{Remark: asymptotics of F_3 and Humbert functions} and Section \ref{Sect: uniformity approach}.

We also derived the asymptotics of $\Psi_2$ for one or two large arguments. As an application, our result \eqref{Psi_2[tx,ty] for large t} corrects Saran's estimate on $\Psi_2$ \cite[Section 8, Eq. (1)]{Saran-1957}
\[\Psi_2[a,c;c';x,y]\sim C\,\me^{x+y+2\sqrt{xy}},\quad x,y\to \infty.\]
Recall that Saran's function $F_E$ admits a Laplace-type integral \cite[p. 134, Eq. (1)]{Saran-1957}
\begin{align*}
	&F_E[a_1,a_1,a_1,b_1,b_2,b_2;c_1,c_2,c_3;x,y,z]\\
	&\hspace{1cm}=\frac{1}{\Gamma(\alpha_1)}\int_0^{\infty}\me^{-t}t^{a_1-1}
	{}_1F_1\left[\begin{matrix}
		b_1\\
		c_1
	\end{matrix};xt\right]
	\Psi_2\left[b_2;c_2,c_3;yt,zt\right]\md t,
\end{align*}
but Saran's convergence condition
\[\Re\left(x+y+z+2\sqrt{yz}\right)<1,\ \Re(a_1)>0\]
is wrong. Our result \eqref{Psi_2[tx,ty] for large t} provides a sufficient condition
\[\Re(y)>0,\ \Re(z)>0,\ \Re\left(y+z+2\sqrt{yz}\right)<1+\min\bigl\{0,-\Re(x)\bigr\},\ \Re(a_1)>0.\]
To clarify the general convergence condition, we are led to establish in later work the asymptotics of $\Psi_2[tx,ty]$ as $t\to +\infty$ with $x,y$ in general case.

In addition, we made initial attempts on two asymptotic techniques we proposed,
i.e., the uniformity approach and the separation method, both of which are currently deficient.

The uniformity approach is based on the series expansions involving the generalized hypergeometirc function $F_n(z)$ 
(see \eqref{F_n(z) definition}), and the main difficulty is to explicitly deduce the uniform estimate \eqref{F_n(z) estimate}. 
We have dealt with the simplest cases:
\begin{enumerate}[\quad(i)]
	\item Estimate on ${}_2F_2[a,b-n;c,d-n;-z]$: see \cite[Theorem 2.4]{HaLu2};
	
	\item Estimate on ${}_2F_2[a,b-n;c,d;-z]$: see Theorem \ref{Thm: 2F2 expansion};
	
	\item Estimate on ${}_2F_1[a,b-n;c;-z]$: briefly mentioned in Section \ref{Sect: methods}.
\end{enumerate}
Unfortunately, exponential expansions are implicit in the estimates for (i) and (ii), 
since nice integrals are missing for such ${}_2F_2$ functions.
Please note Blaschke's conjecture \cite[p. 1791]{Blas}:
\begin{itemize}
	\item (\textbf{More down conjecture})
	{\it
		Denote $\mathscr{F}(\alpha,z)$ by
		\[\mathscr{F}(\alpha,z):={}_pF_q\left[\begin{matrix}
			a_1\pm \alpha,\cdots,a_k\pm \alpha,a_{k+1},\cdots,a_p\\
			b_1\pm \alpha,\cdots,b_m\pm \alpha,b_{m+1},\cdots,b_q
		\end{matrix};z\right].\]
		Regardless of the sign, when more large parameters are down than up, the resulting Taylor series defining $\mathscr{F}(\alpha,z)$ 
		is always an asymptotic expansion for some values of $z$. That is, if $k<m$, then
		\[\mathscr{F}(\alpha,z)\sim \sum_{n=0}^{\infty}\frac{\left(a_1\pm\alpha\right)_n
			\cdots\left(a_k\pm\alpha\right)_n\left(a_{k+1}\right)_n\cdots\left(a_p\right)_n}{\left(b_1\pm\alpha\right)_n
			\cdots\left(b_m\pm\alpha\right)_n\left(b_{m+1}\right)_n\cdots\left(b_q\right)_n}\frac{z^n}{n!},
		\quad \left|\alpha\right|\to\infty.\]
	}
\end{itemize}
The uniformity approach is indeed as difficult as the more down conjecture, since they both are somewhat ill-posed problems. 
It would be interesting to see whether the steepest descent method might become helpful for these problems.

The separation method is based on the Mellin convolution integrals. 
Another asymptotic technique for integrals of this type is the sum up and subtract (SUS) 
method introduced by L\'opez \cite{Lope}. L\'opez and his coauthors \cite{FeLS} used the SUS method to derive the asymptotics of $F_2[x,y]$ for large $x$, 
whereas our separation method works as well. In addition, the SUS method fails for $\Psi_1$, but our separation method still works 
(see Proposition \ref{Prop: Psi_1 for large x}). These facts illustrate that the separation method is pretty powerful, though it is just heuristic. 
It would be desirable to make this method rigorous.

Remarkably, we could not obtain the asymptotics of $\Psi_1[x,y]$ (see \cite{HaLu3}) and $\Psi_2[x,y]$ as $y\to\infty$ 
with arbitrary values of parameters, because we lack of useful expansions of ${}_2F_1$ and ${}_1F_1$, which are expanded in terms of large parameters. 
If such effective expansions of ${}_pF_q$ were known, one might use the Mellin-Barnes integral 
technique to establish the asymptotics of multiple hypergeometric functions for one large argument.

%%###########################################################################################################################%%
\section*{Acknowledgment}
%%###########################################################################################################################%%
%We are very grateful to the anonymous referee for his/her careful reading and valuable suggestions which have helped to improve the presentation of this paper.

We are grateful to Petr Blaschke, Yury Brychkov and Nico Temme for their valuable advice during personal discussions with the first author. The second author was supported by the french ANR-PRME UNIOPEN (ANR-22-CE30-0004-01).


\begin{thebibliography}{99}
	\addcontentsline{toc}{section}{\refname}
	\footnotesize
	\setlength{\itemsep}{-2pt}
	
	\bibitem{AEGHH}
	T. Amdeberhan, O. Espinosa, I. Gonzalez, M. Harrison, V.H. Moll, A. Straub, \textit{Ramanujan's master theorem}, 
	Ramanujan J. \textbf{29} (1-3) (2012), pp. 103--120.
	
	\bibitem{AnDG}
	L.U. Ancarani, J.A. Del Punta, G. Gasaneo, \textit{Derivatives of Horn hypergeometric functions with respect to their parameters}, 
	J. Math. Phys. \textbf{58} (7) (2017), 073504, 18 pages.
	
	\bibitem{ABFMP}
	B. Ananthanarayan, S. Bera, S. Froit, O. Marichev, T. Pathak, \textit{On the evaluation of the Appell $F_2$ double hypergeometric function}, 
	Comp. Phys. Commun. \textbf{284} (2023), 108589, 28 pages.
	
	\bibitem{AnAR}
	G.E. Andrews, R. Askey, R. Roy,
	\textit{Special Functions},
	Cambridge Univ Press, (1999).
	
	\bibitem{Belafhal-Nebdi-2014}
	A. Belafhal, H. Nebdi, 
	\textit{Generation and propagation of novel donut beams
		by a spiral phase plate: Humbert beams}, 
	Opt. Quant. Electron. \textbf{46} (2014), pp. 201--208.
	
	\bibitem{Blas}
	P. Blaschke, \textit{Asymptotic analysis via calculus of hypergeometric functions}, J. Math. Anal. Appl. \textbf{433} (2) (2016), pp. 1790--1820.
	
	\bibitem{BrMS}
	Yu.A. Brychkov, O.I. Marichev, N.V. Savischenko, \textit{Handbook of Mellin Transforms}, CRC Press, (2018).
	
	\bibitem{BrSa}
	Yu.A. Brychkov, N. Saad, \textit{On some formulas for the Appell function $F_2(a,b,b';c,c';w,z)$}, 
	Integral Transforms Spec. Funct. \textbf{25} (2) (2014), pp. 111--123.
	
	\bibitem{BrSa1}
	Yu.A. Brychkov, N. Saad, \textit{On some formulas for the Appell function $F_3(a,a',b,b';c;w,z)$}, 
	Integral Transforms Spec. Funct. \textbf{26} (10) (2015), pp. 910--923.
	
	\bibitem{BrSav1}
	Yu.A. Brychkov, N.V. Savischenko, \textit{On some formulas for the Horn function $H_2(a,b,c,c';d;w,z)$ and confluent Horn function $H^{(c)}_2(a,b,c;d;w,z)$}, 
	Integral Transforms Spec. Funct. \textbf{32} (4) (2021), pp. 253--270.
	
	\bibitem{BrSav2}
	Yu.A. Brychkov, N.V. Savischenko, 
	\textit{On some formulas for the Horn functions $H_3(a,b;c;w,z), H^{(c)}_6(a;c;w,z)$ and Humbert function $\Phi_3(b;c;w,z)$}, 
	Integral Transforms Spec. Funct. \textbf{32} (9) (2021), pp. 661--676.
	
	\bibitem{BrSav3}
	Yu.A. Brychkov, N.V. Savischenko, \textit{On some formulas for the Horn functions $H_5(a,b;c;w,z)$ and $H^{(c)}_5(a;c;w,z)$}, 
	Integral Transforms Spec. Funct. \textbf{33} (5) (2022), pp. 373--387.
	
	\bibitem{BrSav4}
	Yu.A. Brychkov, N.V. Savischenko, 
	\textit{On some formulas for the confluent Horn functions $H^{(c)}_3(a,b,c;d;w,z)$, $H^{(c)}_4(a,c;d;w,z)$ and $H^{(c)}_9(a,b;c;w,z)$}, 
	Integral Transforms Spec. Funct. \textbf{34} (4) (2022), pp. 275--294.
	
	\bibitem{Chib-Khannous-Belafhal-2023}
	S. Chib, F. Khannous, A. Belafhal, 
	\emph{Generation of donut Humbert beam of type-II}, 
	Opt. Quant. Electron. \textbf{55} (2023), 936, 11 pages.
	
	\bibitem{Silva-Srivastava-2001}
	J. Carvalho e Silva, H.M. Srivastava, 
	\textit{Asymptotic representations for a class of multiple hypergeometric functions with a dominant variable}, 
	Integral Transforms Spec. Funct. \textbf{11} (2) (2001), pp. 137--150.
	
	\bibitem{Erdelyi-1939}
	A. Erd\'{e}lyi, 
	\textit{Integration of a certain system of linear partial differential equations of hypergeometric type},
	Proc. Roy. Soc. Edinburgh Sect. A \textbf{59} (1939), pp. 224--241.
	
	\bibitem{Erdelyi-1940}
	A. Erd\'{e}lyi, \textit{Some confluent hypergeometric functions of two variables}, 
	Proc. Roy. Soc. Edinburgh Sect. A \textbf{60}
	(1940), pp. 344--361.
	
	\bibitem{Gasaneo-2001}
	G. Gasaneo, F.D. Colavecchia, S. Otranto, C.R. Garibotti, 
	\textit{The hyperbolic-hypergeometric functions}, 
	J. Math. Phys. \textbf{42} (10) (2001), pp. 4971--4983.
	
	\bibitem{Gasaneo-1997}
	G. Gasaneo, F.D. Colavecchia, C.R. Garibotti, J.E. Miraglia, P. Macri, 
	\emph{Correlated continuum wave functions for three particles with Coulomb interactions}, 
	Phys. Rev. A \textbf{55} (1997), pp. 2809--2820.
	
	\bibitem{FeLS}
	C. Ferreira, J.L. L\'opez, E.P. Sinus\'ia, \textit{The second Appell function for one large variable}, 
	Mediterr. J. Math. \textbf{10} (4) (2013), pp. 1853--1865.
	
	\bibitem{HaLu1}
	P.-C. Hang, M.-J. Luo, \textit{Asymptotics of the Humbert function $\Psi_1$ for two large arguments}, SIGMA \textbf{20} (2024), 074, 13 pages.
	
	\bibitem{HaLu2}
	P.-C. Hang, M.-J. Luo, \textit{Note on the Humbert function $\Psi_1$}, arXiv: \texttt{2410.21985v2}.
	
	\bibitem{HaLu3}
	P.-C. Hang, M.-J. Luo, \textit{Asymptotics of Saran's hypergeometric function $F_K$}, J. Math. Anal. Appl. \textbf{541} (2) (2025), 128707, 19 pages.
	
	\bibitem{Henk}
	M. Henkel, \textit{Finite-size scaling in the ageing dynamics of the $1D$ Glauber-Ising model}, 
	submitted to Entropy, arXiv: {\tt 2501.05912}.
	
	\bibitem{Humbert 1922}
	P. Humbert, \textit{IX.--The confluent hypergeometric functions of two variables}, Proc. R. Soc. Edinb. \textbf{41} (1922), pp. 73--96.
	
	\bibitem{Jaeger-1938}
	J.C. Jaeger, 
	\textit{A continuation formula for Appell's function $F_2$}, 
	J. Lond. Math. Soc. \textbf{13} (1938), p. 254.
	
	\bibitem{Josh-Arya-1982}
	C.M. Joshi, J.P. Arya, 
	\textit{Inequalities for certain confluent hypergeometric functions of two variables}, 
	Indian J. Pure Appl. Math. 
	\textbf{13} (4) (1982), pp. 491--500.
	
	\bibitem{Kumar-2020}
	R. Kumar, 
	\textit{The generalized modified Bessel function and its connection with Voigt line profile and Humbert functions}, 
	Adv. Appl. Math. \textbf{114} (2020), 101986, 19 pages.
	
	\bibitem{LiWo}
	Y. Lin, R. Wong, \textit{Asymptotics of generalized hypergeometric functions}, Frontiers in Orthogonal Polynomials and $q$-Series (2018), pp. 497--521.
	
	\bibitem{Lope}
	J.L. L\'opez, \textit{Asymptotic expansions of Mellin convolution integrals}, SIAM Rev. \textbf{50} (2) (2008), pp. 275--293.
	
	\bibitem{Miyazaki-2014}
	T. Miyazaki, 
	\textit{On Fourier-Jacobi expansions of real analytic Eisenstein series of degree 2},
	Abh. Math. Semin. Univ. Hamb. \textbf{84} (1) (2014), pp. 85--122.
	
	
	\bibitem{Mukai-2023}
	S. Mukai, 
	\textit{Confluences of Appell's $F_2$ system of hypergeometric differential equations}, 
	Kyushu J. Math. \textbf{77} (2023), pp. 1--42.
	
	\bibitem{Olve}
	F.W.J. Olver, \textit{Why steepest descents?}, SIAM Rev. \textbf{12} (2) (1970), pp. 228--247.
	
	\bibitem{NIST}
	F.W.J. Olver, A.B. Olde Daalhuis, D.W. Lozier, B.I. Schneider, R.F. Boisvert,
	C.W. Clark, B.R. Miller, B.V. Saunders, H.S. Cohl, M.A. McClain (Editors),
	\textit{NIST Digital Library of Mathematical Functions}, Release 1.2.3 of 2024-12-15, aviable at \url{https://dlmf.nist.gov/}.
	
	\bibitem{Integrals & Series v3}
	A.P. Prudnikov, Yu.A. Brychkov, O.I. Marichev, \textit{Integrals and Series, Vol. 3: More Special Functions}, 
	Gordon and Breach Science Publishers, New York, (1990).
	
	
	\bibitem{Sanchez-Nagar-2003}
	L.E. S\'{a}nchez, D.K. Nagar, 
	\textit{Non-central matrix-variate Dirichlet distribution}, 
	Taiwanese J. Math. \textbf{7} (3) (2003), pp. 477--491.
	
	\bibitem{Saran-1957}
	S. Saran, \textit{Integral representations of Laplace type for certain hypergeometric functions of three variables}, Riv., Mat. Univ. Parma \textbf{8} (1957), pp. 133--143.
	
	\bibitem{Shimomura-1996}
	S. Shimomura, 
	\textit{Connection formulae for solutions of a system of partial differential equations associated with the confluent hypergeometric function $\Phi_2$},  
	Proc. Japan Acad. \textbf{72} (1996), pp. 82--84.
	
	\bibitem{SrKa}
	H.M. Srivastava, P.W. Karlsson,
	\textit{Multiple Gaussian Hypergeometric Series}, 
	Ellis Horwood Ltd., Chichester, (1985).
	
	
	\bibitem{Srivastava-Miller-1987}
	H.M. Srivastava, E.A. Miller,
	\textit{A unified presentation of the Voigt functions}, 
	Astrophys. Space Sci. \textbf{135} (1987), pp. 111--118.
	
	\bibitem{Tuan-Saigo-Dinh-1996}
	V.K. Tuan, M. Saigo, T.D. Dinh, 
	\textit{Some integral transforms in the space of entire functions of exponential type}, 
	Proc. Japan Acad., Ser. A \textbf{72} (4) (1996), pp. 75--77.
	
	\bibitem{WaHe}
	S. Wald, M. Henkel, \textit{On integral representations and asymptotics of some hypergeometric functions in two variables}, 
	Integral Transforms Spec. Funct. \textbf{29} (2) (2018), pp. 95--112.
	
\end{thebibliography}
\end{document}